\documentclass[12pt,reqno,a4paper]{amsart}
\usepackage[margin=1in]{geometry}

\usepackage[all]{xy}

\usepackage{ amssymb, amsmath, enumerate, amsfonts, amsthm, mathrsfs, url, bm, mathtools}

\numberwithin{equation}{section}

\usepackage{xcolor}  	

\usepackage[backref]{hyperref}
\hypersetup{
    colorlinks=true,
    linkcolor={blue!60!black},
    citecolor={blue!60!black},
    urlcolor={red!60!black}
}

\usepackage{color}

\RequirePackage{doi}

\usepackage[square,sort,comma,numbers]{natbib}
\makeatletter
\@namedef{subjclassname@2020}{%
  \textup{2020} Mathematics Subject Classification}
\makeatother

 \newcommand{\set}[1]{\left\{#1\right\}}
\newcommand{\bigset}[1]{\bigl\{ #1 \bigr\}}

\newcommand{\bigabs}[1]{\bigl| #1 \bigr|}
\newcommand{\Bigabs}[1]{\Bigl| #1 \Bigr|}
\newcommand{\biggabs}[1]{\biggl| #1 \biggr|}

\newcommand{\bigbrac}[1]{\bigl( #1 \bigr)}
\newcommand{\Bigbrac}[1]{\Bigl( #1 \Bigr)}

\newcommand{\norm}[1]{\left\| #1\right\|}

\newcommand{\rd}{\,\mathrm{d}}

\newcommand{\twosum}[2]{ \sum_{\substack{#1\\ #2}}}

\newcommand{\Z}{\mathbb{Z}}

\newcommand{\R}{\mathbb{R}}

\newcommand{\T}{\mathbb{T}}

\newcommand{\eps}{\varepsilon}

\makeatletter
\let\@@pmod\pmod
\DeclareRobustCommand{\pmod}{\@ifstar\@pmods\@@pmod}
\def\@pmods#1{\mkern4mu({\operator@font mod}\mkern 6mu#1)}
\makeatother

\renewcommand{\mod}[1]{\,(\mathrm{mod}\,#1)}

\renewcommand{\leq}{\leqslant}
\renewcommand{\geq}{\geqslant}
\renewcommand{\epsilon}{\varepsilon}
\newcommand{\major}{\mathfrak M}
\newcommand{\minor}{\mathfrak m}

\usepackage[normalem]{ulem}

\newtheorem{theorem}{Theorem}[section]

\newtheorem{corollary}[theorem]{Corollary}

\newtheorem{proposition}[theorem]{Proposition}
\newtheorem{lemma}[theorem]{Lemma}

\theoremstyle{definition}

\numberwithin{theorem}{section}

\begin{document}

\title[Discorrelation in short intervals]{Discorrelation of the M\"obius function with linear phases in short intervals}

\author{Javier Pliego}

\address{Departamento de Matem\'aticas, Universidad Aut\'onoma de Madrid, 28049 Madrid, Spain}

\email{javier.pliego@uam.es}

\author{Mengdi Wang}
\address{\'{E}cole Polytechnique F\'{e}d\'{e}rale de Lausanne (EPFL), Lausanne, Switzerland}
\email{mengdi.wang@epfl.ch}

\maketitle

\begin{abstract}
\backrefsetup{disable}

We introduce a new approach to studying correlations between arithmetic functions and linear phases with respect to highly irrational frequencies over short intervals. As an application, we prove that
\[
\Big|\sum_{x<n \leq x+H}\mu(n) e(n\alpha)\Big| \ll H(\log x)^{-1/3+\varepsilon }
\]
whenever $x^{0.58+\varepsilon }\leq H\leq x$.  In the same range of $H$ we also show that
\[
\Big|\sum_{x<n \leq x+H} \Lambda (n) e(n\alpha)\Big| \ll H(\log x)^{-A}
\]
for arbitrarily large $A>0$ unless there exists an integer $1\leq q\ll (\log x)^{O_A(1)}$ such that $\norm{q\alpha} \ll x  (\log x)^{O_A(1)}/H^2$.	This breaks the $3/5$ barrier in previous work of Matom\"aki--Ter\"av\"ainen \cite[Theorem 1.5]{MT} and of Zhan \cite[Theorem 2]{Zhan}.

Moreover, by our method, any improvement in large value estimates for character-twisted Dirichlet polynomials on the critical range would lead to a corresponding improvement in the  lower bound for $H$.

\backrefsetup{enable}
\end{abstract}


\section{Introduction}\label{introduction}

In this paper, we study correlations of arithmetic functions with the linear phases $e(n\alpha)$ over short intervals, uniformly for $\alpha\in[0,1)$, with the aim of extending existing estimates to shorter intervals. We focus on the von Mangoldt function $\Lambda$ and the M\"obius function $\mu$, although our methods should also apply to divisor functions and possibly other arithmetic functions of interest. Our first result is as follows.

\begin{theorem}\label{thm:mobius}
Suppose that $x^{0.58+\eps }\leq H\leq x$. Then, uniformly for all $\alpha \in \R $, one has
\[
\Bigabs{\sum_{x<n \leq x+H}\mu(n) e(n\alpha)} \ll H(\log x)^{-1/3+\eps }.
\] 	
\end{theorem}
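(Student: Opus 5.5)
We split according to the Diophantine nature of $\alpha$. By Dirichlet's theorem choose $q\leq Q$ and $a$ with $(a,q)=1$ and $|\alpha-a/q|\leq 1/(qQ)$, where $Q=H^2/(x(\log x)^{B})$ for a suitable constant $B$. There are then two cases: the major arc case $q\leq(\log x)^{B}$, and the minor arc case, in which $\alpha$ is highly irrational at scale $H$.

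\emph{Major arcs.} Write $\alpha=a/q+\beta$ with $|\beta|\ll x(\log x)^{B}/H^2$. Expand $e(na/q)$ in Dirichlet characters modulo divisors of $q$. Then partial summation in $\beta$ reduces the problem to sums $\sum_{x<n\leq x'}\mu(n)\chi(n)$ with $x'\leq x+H$, for characters of conductor at most $(\log x)^{B}$. The phase $e(n\beta)$ varies on a scale $H/\bigl(1+|\beta|H\bigr)\gg H^2/(x(\log x)^B)\geq x^{0.16}$. So we need short-interval estimates for $\mu\chi$ on intervals of length roughly $H^2/x$ on average, or we must keep the twist by $n^{it}$ with $|t|\ll|\beta|x$. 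Matom\"aki--Radziwi\l\l{}-type bounds for multiplicative functions in short intervals give a saving $(\log x)^{-1/3+\eps}$ uniformly for twisted $\mu\chi$; this is the source of the exponent $1/3$, via the Hal\'asz--Granville--Soundararajan bound for non-pretentious $\mu$ after Ramar\'e-type factorization. Alternatively, Huxley-type zero density estimates for $L(s,\chi)$ in $|t|\ll x^{1-0.16}$ give a power of $\log$ saving directly. Either way this case is routine once $\beta$ is small.

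\emph{Minor arcs.} Apply a Heath-Brown identity or Vaughan identity to $\mu$ on $(x,x+H]$. This produces type I sums $\sum_{m\leq M}a_m\sum_{n}e(mn\alpha)$, handled by the standard bound $\sum_m\min(H/m,\|m\alpha\|^{-1})$, which is small when $q$ is large. It also produces type II sums $\sum_{m\sim M}\sum_{n\sim N}a_mb_n e(mn\alpha)$ with $mn\in(x,x+H]$. For the type II sums the new idea is the following. Approximate $e(mn\alpha)$ by writing $mn\alpha=mna/q+mn\beta$. Decompose the condition $mn\equiv\cdot\pmod q$ with characters, and the archimedean part $e(mn\beta)$ with Mellin transforms. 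This turns the bilinear form into an average over characters $\chi\bmod q$ and $t$ of products $M(\tfrac12+it,\chi)N(\tfrac12+it,\chi)$ of Dirichlet polynomials, localized to $|t|\lesssim x/H$ times a factor coming from $\beta$. The mean value theorem $\sum_\chi\int|F|^2\ll(q T+N)\|b\|^2$ handles balanced ranges. For the critical range we bound the measure of the set where $|M(\tfrac12+it,\chi)|$ is large using large value estimates for character-twisted Dirichlet polynomials (Huxley/Jutila type) on $q$ conductors and height $T$, and combine these with the fourth moment or mean value bounds for the complementary factor.

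The main obstacle is the type II range in which neither the mean value theorem nor the classical large values bound suffices. This is exactly where the $3/5$ barrier of Zhan and Matom\"aki--Ter\"av\"ainen arises. I would first try a Harman-sieve style treatment: use a Heath-Brown decomposition with enough factors that every part either has a factor in a range covered by the hybrid large-value estimate, or can be grouped into a Type II sum with $N\in[x^{1-\theta-\delta},x^{\theta}]$-type ranges. I would then verify numerically that all ranges are covered once $H\geq x^{0.58+\eps}$. The remaining ranges are absorbed using the crude bound $(\log x)^{-1/3+\eps}$ available for $\mu$, which is acceptable given the weaker target. This case analysis, and optimizing the large values input so that $0.58$ is reached, is where I expect most of the work to lie.
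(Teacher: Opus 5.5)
Your proposal has two genuine gaps, one in each half.

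\textbf{The small-$q$ arcs are not routine.} With $q\leq(\log x)^B$ and $|\beta|$ up to $x(\log x)^B/(qH^2)$, these arcs are not routine. Via $e(n\beta)\approx c\,n^{2\pi i x\beta}$ they are essentially the full problem for $\mu\chi(n)n^{it}$ at height $|t|\approx|\beta|x$, which reaches $(x/H)^2\approx x^{0.84}$. This is exactly the region excluded in Theorem~\ref{thm:mangoldt}, and none of the tools you name reaches it:
\begin{itemize}
\item Partial summation in $\beta$ costs a factor $1+|\beta|H$, which can be as large as $x^{0.42}$.
\item Matom\"aki--Radziwi{\l}{\l} bounds are almost-all statements and say nothing about a fixed interval $(x,x+H]$.
\item Full-range zero-density estimates at height $T\approx x^{0.84}$ are useless: even the density hypothesis gives $x^{\sigma-1}T^{2(1-\sigma)}=x^{0.68(1-\sigma)}$.
\end{itemize}
What is actually needed is a zero count in $t$-windows of length $x/H$ with exponent $50/21$. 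The paper obtains this only under \eqref{eq-moment} (Theorem~\ref{thm1.3}), and notes that unconditionally this route gives only $H\geq x^{0.65}$.

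In the paper, the genuine major arcs are just $|\beta|\leq(\log x)^{5A}/H$ (Proposition~\ref{prop:major-arc}). The rest of the small-$q$ range goes through the same Dirichlet-polynomial machinery as the minor arcs (Proposition~\ref{prop:small-beta}(1)). The $(\log x)^{-1/3+\eps}$ is the error term of Ramar\'e's identity. It is paid so that the Type~II case with $q\leq(\log x)^A$ can use the Vinogradov--Korobov pointwise bound for the prime polynomial $P(\frac12+it,\chi)$ (Lemma~\ref{lem:bd-diri-prime}) at $|t|\geq(\log x)^{4A}$. It is not a Hal\'asz-type bound. Nor can it be used, as in your last sentence, to absorb ranges left uncovered: every component $\mu_{\boldsymbol N}$ must be covered by some estimate.

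\textbf{The minor-arc plan lacks the ingredient that breaks $3/5$.} Your plan does not say what handles the configuration $(\gamma_i)=(\frac15,\dots,\frac15,0,\dots)$. Once $\theta<3/5$ it escapes the Type~I, Type~I$_2$ and Type~II alternatives, and invoking a Harman-sieve style treatment with Huxley/Jutila large values does not supply a mechanism.

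The paper's answer is the trilinear Type~III estimate (Lemma~\ref{lem:type-iii}). It groups the factors into three polynomials with $x^{4/25}\ll M_3\ll x^{1/5}$, $M_2\gg(x/H)^{4/5+2\eps}$ and $M_2^{1/2}M_3\ll x^{2/5}$. It then bounds the large-value sets of all three and combines them by H\"older, using the Chen--Gupta--Li character-twisted Guth--Maynard bound \eqref{pp9} for $\sigma\in[7/10,8/10]$.

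This forces $q$ to be small. With your single choice $Q=H^2/(x(\log x)^B)\approx x^{0.16}$ and $q$ near $Q$, one has $(qx/H)^{2/3}\approx x^{0.39}>(x/H)^{4/5}\approx x^{0.34}$, so \eqref{pp9} does not even apply.

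The paper therefore chooses $Q$ depending on $\boldsymbol N$: $x^{2/25+\eps}$ for Type~III and $H^2/x^{1+\eps}$ for Type~I$_2$. This leaves $|\beta|$ as large as $1/(qQ)\gg x/H^2$. Those $\beta$ are handled by the stationary-phase reduction (Lemma~\ref{lem:large-beta}, Proposition~\ref{pro:large-beta}) to $t$-windows of length $\asymp\beta H$ around $2\pi\beta x$.

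Type~I$_2$ additionally needs the short-window fourth moment with the improved term $q^{71/96}T^{2/3}$ (Lemma~\ref{lem:fourth-moment}, via Jutila--Motohashi and Kim--Sarnak). Finally, Section~\ref{sec9} shows by a short combinatorial argument that these four estimates cover every $\boldsymbol N$ when $\theta>0.58$.
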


Previously, Zhan~\cite[Theorem 5]{Zhan} showed that this exponential sum exhibits arbitrarily large logarithmic decay when $H\geq x^{5/8+\eps}$. Matom\"aki and Ter\"av\"ainen~\cite[Theorem 1.5]{MT} subsequently extended the range to $H\geq x^{3/5+\eps}$, at the cost of weakening the error term to $O(H(\log x)^{-1/3+\eps})$, as in our Theorem~\ref{thm:mobius}. Their argument exploits the multiplicativity of the M\"obius function but still follows Zhan's approach for highly irrational frequencies. Very recently, in joint work with Matom\"aki, Pandey, and Ter\"av\"ainen~\cite[Corollary 1.3]{MPPTW}, we obtained arbitrarily large power of $\log x$ savings throughout the range $H\geq x^{3/5+\eps}$, using Guth and Maynard's  new large value estimates in \cite{GM}.

The main new idea of this paper concerns highly irrational frequencies, and also applies in the context of the von Mangoldt function.  

\begin{theorem}\label{thm:mangoldt}
Suppose that $x^{0.58+\eps }\leq H\leq x$ and that $A\geq 1$ is a large constant. Then one has
\[
\Bigabs{\sum_{x<n \leq x+H} \Lambda (n) e(n\alpha)} \leq H(\log x)^{-A}
\] and 
\begin{equation}\label{ec1.1}
\Bigabs{\sum_{x<n \leq x+H} \mu (n) e(n\alpha)} \leq H(\log x)^{-A},
\end{equation}
unless there exists an integer $1\leq q\ll (\log x)^{O_A(1)}$ such that
\[
\norm{q\alpha} \ll \frac{x(\log x)^{O_A(1)}}{H^2}.
\]	
\end{theorem}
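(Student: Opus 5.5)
We split the frequencies into a major-arc and a minor-arc regime. By Dirichlet's approximation theorem, choose $1\leq q\leq Q:=H^2/(x(\log x)^{B})$ and $a$ coprime to $q$ with $|\alpha-a/q|\leq 1/(qQ)$, where $B=B(A)$ is large. If $q\leq (\log x)^{B}$, then $\norm{q\alpha}\leq 1/Q = x(\log x)^{B}/H^2$, which is exactly the excluded case. So we may assume $q>(\log x)^{B}$, the ``highly irrational'' regime, and we must show cancellation there. The plan is the following. First, apply Heath-Brown's identity (for $\Lambda$) or a Vaughan-type/combinatorial decomposition (for $\mu$). This reduces the sum to $O((\log x)^{O(1)})$ multilinear sums $\sum_{m_1\cdots m_k\in(x,x+H]}a_1(m_1)\cdots a_k(m_k)e(\alpha m_1\cdots m_k)$, where each $a_i$ is a smooth weight, $\mu$, or $1$, supported on a dyadic range $M_i$. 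These sums are then sorted into type I sums (a long smooth variable), type II sums (a bilinear structure with both variables in a suitable middle range), and the remaining ``critical'' sums that neither classical treatment handles when $H<x^{3/5}$.

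Type I sums with a smooth variable longer than about $x/H\cdot x^{\eps}$ are handled directly: summing the geometric progression in that variable gives $\min(H/m,\norm{m\alpha}^{-1})$, and the standard bound $\sum_{m\leq M}\min(N,\norm{m\alpha}^{-1})\ll (M+q+MN/q)\log$ wins a power of $\log$ once $q>(\log x)^B$ and $q<Q$. For type II sums in a wide range we use the classical short-interval bilinear estimate (Cauchy--Schwarz, differencing in the long variable, then the same $\min$ lemma). This is Zhan's treatment, and it covers everything when $H\geq x^{5/8}$.

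The new input concerns the critical configurations. Here we write $\alpha=a/q+\beta$ and expand $e(an/q)$ into Dirichlet characters modulo divisors of $q$. The twist $e(\beta n)$ is absorbed by a Mellin/Taylor transform on the short interval, since $|\beta|\leq 1/(qQ)$ and $H|\beta|$ is controlled. This converts each critical multilinear sum into an average over characters $\chi\bmod q$ and over $t$ in a range of length roughly $T\asymp x/H$ (enlarged by $x|\beta|$) of products of character-twisted Dirichlet polynomials $\prod_i M_i(1/2+it,\chi)$. At this point the statement becomes a large value problem. We bound the measure of the set of $(t,\chi)$ where a factor of length $N$ is of size $\geq N^{\sigma}$, using the character-twisted large value estimates on the critical range (the hybrid Guth--Maynard-type bounds, together with the classical mean value and Huxley estimates). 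Combining these with the trivial $\ell^2$ mean value for the complementary factors via Hölder, the total contribution is $\ll H(\log x)^{-A}$. The gain comes from the $q$-aspect: because $q>(\log x)^B$, the Gauss sums contribute a factor $q^{-1/2}$, and this extra saving is what the large value count has to beat.

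The main obstacle is the exponent bookkeeping in this third step. We must check that for every critical tuple $(M_1,\dots,M_k)$ with $H=x^{\theta}$ and $\theta\geq 0.58+\eps$, the large value estimates uniformly in $q\leq Q$ force a saving, including the ranges where $q$ is a small power of $x$ and where $x|\beta|$ is large. I would first do this optimization numerically over $\sigma$ and the lengths to locate the worst case and confirm the threshold $0.58$.

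For $\mu$, the same decomposition applies. One extra point arises when $q$ is only slightly larger than $(\log x)^{B}$: large-modulus characters behave well, but exceptional-character issues may arise. These are handled by the Siegel--Walfisz-type input, which is why the saving for $\mu$ in Theorem~\ref{thm:mobius} without the major-arc restriction is only $(\log x)^{-1/3+\eps}$, whereas \eqref{ec1.1} retains the arbitrary power.
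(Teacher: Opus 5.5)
Your Type~I and Type~II treatment is fine, and so is using the hypothesis to discard $q\leq(\log x)^B$ for an approximation at level $Q\approx H^2/x$. The paper does the same, via mean values of Dirichlet polynomials rather than the $\min$-lemma.

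The gap is in the step you defer to ``exponent bookkeeping'': as you set it up, it cannot succeed. You keep that single approximation and propose to bound the critical configurations by character-twisted large value estimates uniformly in $q\leq H^2/(x(\log x)^B)$. Those estimates count pairs $(t,\chi)$ in terms of $qT\asymp qx/H$, so their off-diagonal terms grow in $q$ faster than the factor $q^{1/2}$ you gain from the Gauss sums. As the paper points out, with $q$ this large the available character large value estimates cannot get below $x^{3/5}$. For instance, take five factors of size $x^{1/5}$ grouped as $M=N=x^{2/5}$, $R=x^{1/5}$. The term $\frac{qx}{H}(MN)^{21/40}R^{3/5}$ in Case~1.2(i) of the proof of Lemma~\ref{lem:type-iii} then exceeds the target $q^{1/2}x$ by $x^{1/25}$ when $q\asymp H^2/x$, whatever $H$ is.

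The missing idea is to apply Dirichlet's theorem \emph{after} the Heath-Brown decomposition, with $Q$ depending on the configuration:
$Q=H^2/(x(\log x)^{A/3})$ for Types~I/II, $Q=H^2/x^{1+\eps}$ for Type~I$_2$, and $Q=x^{2/25+\eps}$ for the three-polynomial sums. With the small $Q$, $\beta$ can be as large as $1/(qx^{2/25+\eps})$, far beyond $x/H^2$, and there your Taylor/Mellin step breaks down. The paper handles this by stationary phase (Lemmas~\ref{lem1} and~\ref{lem:large-beta}, Proposition~\ref{pro:large-beta}). This replaces $e(\beta n)$ on $(x,x+H]$ by an integral of $n^{it}$ over $[2\pi\beta x,2\pi\beta(x+H)]$. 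Large values are then counted with $q\beta H\leq Hx^{-2/25-\eps}$ in place of $qx/H$, against the target $(q\beta x)^{1/2}H$. Small $q$ in this new approximation is harmless: a non-exceptional $\alpha$ lies in $\minor$, which forces $|t|$ to be large and gives pointwise savings.

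A second input is also absent from your plan. Configurations with two long smooth factors, e.g.\ $(\gamma_i)=(0.4,0.3,0.3)$ with coefficients $1$ or $\log$, are neither Type~I nor Type~II at $\theta=0.58$. They also have no factor in $[x^{4/25},x^{1/5}]$ to serve as the third polynomial. The paper handles them (Lemma~\ref{lem:typei2}) with the fourth moment of $L(\frac12+it,\chi)$ over short $t$-intervals, with $q$-dependence $q^{71/96}T^{2/3}$ (Jutila--Motohashi plus Kim--Sarnak, Lemma~\ref{lem:fourth-moment}), uniformly for $q\leq H^2/x^{1+\eps}$. The general-coefficient bounds you list (mean value, Huxley, Guth--Maynard type) do not suffice there when $q$ is close to $H^2/x$. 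Relatedly, Type~I and classical Type~II alone do not cover $H\geq x^{5/8}$: $(0.35,0.35,0.3)$ is neither, and Zhan needs the fourth moment for it.

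Finally, your explanation of the $(\log x)^{-1/3+\eps}$ in Theorem~\ref{thm:mobius} is wrong. That loss comes from Ramar\'e's identity, used to extract a prime $p\in(P,\widetilde P]$ whose polynomial supplies the logarithmic saving (Lemma~\ref{lem:bd-diri-prime}) in the Type~II estimate when $q\leq(\log x)^A$. In Theorem~\ref{thm:mangoldt} that regime is excluded by hypothesis. So no Siegel--Walfisz or exceptional-character input is needed for either $\Lambda$ or $\mu$.
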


This improves on \cite[Theorem 2]{Zhan} from 1991, wherein similar consequences were derived for intervals of length $H\geq x^{3/5}(\log x)^{O_{A}(1)}$. An inspection of the statements of both Theorem~\ref{thm:mangoldt} and ~\cite[Corollary 1.3]{MPPTW} reveals that the main barrier to reducing $H$ in the study of $\sum_{x<n\leq x+H}\Lambda(n)e(n\alpha)$ occurs when $\norm{q\alpha}\approx x/H^2$ with $q\leq(\log x)^{O_{A}(1)}$. This  observation motivates the following conditional result, which improves \cite[Theorem 4]{Zhan}.

\begin{theorem}\label{thm1.3}
	Let $T\geq 1$ and $q'\leq (\log T)^{O(1)}$. Assume that
	\begin{equation}\label{eq-moment}
	\sum_{\chi \mod {q'}} \int_T^{T+T^{25/42}} \Bigabs{L\Big(\frac12+it,\chi\Big)}^{4} \rd t\ll T^{25/42+\eps }.
	\end{equation}
	Suppose that $x^{0.58+\eps }\leq H\leq x$. Then, for any arbitrarily large constant $A>1$ one has
	\[
\Bigabs{\sum_{x<n \leq x+H} \Lambda (n) e(n\alpha)} \leq H(\log x)^{-A},
\]
unless there exists an integer $1\leq q\ll (\log x)^{O_A(1)}$ such that
\[
\norm{q\alpha} \ll (\log x)^{O_A(1)}/H.
\]	
\end{theorem}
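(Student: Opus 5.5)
Theorem~\ref{thm:mangoldt} already gives the required bound unless there is an integer $1\leq q\ll(\log x)^{O_A(1)}$ with $\norm{q\alpha}\ll x(\log x)^{O_A(1)}/H^2$. So we may assume we are in this ``major arc'' situation and write $\alpha=a/q+\beta$ with $(a,q)=1$. It then remains to treat
\[
\frac{(\log x)^{B}}{H}\leq |\beta|\ll \frac{x(\log x)^{O_A(1)}}{H^2},
\]
for a suitably large $B=B(A)$. If $|\beta|$ is smaller than $(\log x)^B/H$, the conclusion allows us to stop, since then $\norm{q\alpha}\ll(\log x)^{O_A(1)}/H$.

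\textbf{Step 1: reduction to Dirichlet polynomials.} Split $n$ according to $(n,q)$; the contribution from $(n,q)>1$ is negligible, since $q$ is small. Expand $e(an/q)$ through Dirichlet characters modulo divisors of $q$, using Gauss sums. This reduces the problem to bounding
\[
\sum_{x<n\leq x+H}\Lambda(n)\chi(n)e(n\beta)
\]
for characters $\chi$ of modulus $q'\leq(\log x)^{O(1)}$. Next apply Perron's formula, or equivalently a Mellin transform of the smooth weight $e(n\beta)1_{(x,x+H]}(n)$. By stationary phase, $n^{-s}e(n\beta)$ concentrates on $t\approx -2\pi\beta x$, in a window of length about $x/H$ in $t$. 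With $|\beta|\leq x^{1+\eps}/H^2$ this gives $|t|\approx T:=|\beta| x\ll x^{2+\eps}/H^2$. The sum is then essentially
\[
\frac{H}{x/H}\cdot\frac{1}{\sqrt{\cdot}}\int_{|t-T_0|\ll x/H}\Big|\frac{L'}{L}\Big(\tfrac12+it,\chi\Big)\Big|\cdots
\]
More precisely, the task is to show that the Dirichlet polynomial $\sum_{n\sim x}\Lambda(n)\chi(n)n^{-1/2-it}$ has mean value $\ll x^{1/2}(\log x)^{-A}$ over an interval of length $x/H$ near height $T$. The bound is trivial unless $T\geq (\log x)^B x/H$. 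That lower bound is exactly what makes the $t$-window of length $x/H$ lie away from $t=0$, so that no main term arises.

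\textbf{Step 2: decomposition and mean values.} Apply Heath-Brown's identity to decompose $\Lambda$ into type I/II products of Dirichlet polynomials $M_1\cdots M_k$. Apply the Matom\"aki--Ter\"av\"ainen style factorization, combining Hal\'asz--Montgomery and the Vinogradov--Korobov zero-free region to get $(\log x)^{-A}$ savings at the large-value points, over short $t$-intervals of length $L=x/H$ at height $T$. The new input is the fourth moment hypothesis \eqref{eq-moment}. When one factor is a long smooth polynomial (type I), the short-interval fourth moment of $L(\tfrac12+it,\chi)$ on windows of length $T^{25/42}$ controls it. This requires $x/H\geq T^{25/42}$, i.e. $x/H\geq (x^2/H^2)^{25/42}$, i.e. $H\geq x^{4/25}\cdot$, which is consistent with $H\geq x^{0.58}$ (indeed $(x/H)^{42}\geq (x/H)^{50}/\cdots$ needs checking but holds in range). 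Type II and the remaining configurations are handled by the large value estimates already used for Theorem~\ref{thm:mangoldt}, now with $T\leq x^{2}/H^2$ instead of $x^{1+\eps}$. Coverage of all Heath-Brown configurations uses the same combinatorial case analysis as in Theorem~\ref{thm:mangoldt}, with the twelfth-power/fourth-moment input replaced by \eqref{eq-moment}.

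\textbf{Main obstacle.} The difficulty is the range $|\beta|\approx x/H^2$, i.e. $T\approx x^2/H^2$ with window $x/H$. Here $x/H$ is only about $T^{1/2}$, while the unconditional short-interval fourth moment needs windows of length about $T^{2/3}$. This is where \eqref{eq-moment} on windows of length $T^{25/42}$ is needed. The exponent $25/42$ must match the $0.58$ threshold, and the bookkeeping to verify that every Heath-Brown piece is covered in this range is the step I expect to be the most delicate.
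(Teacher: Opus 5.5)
\textbf{There is a genuine gap in Step~2.} Your opening reduction is fine and matches the paper. For $q>(\log x)^A$ the proof of Theorem~\ref{thm:mangoldt} applies. For $q\le(\log x)^A$ the minor-arc condition leaves $(\log x)^{5A}/H\le|\beta|\le x(\log x)^{A/3}/H^2$, i.e.\ a $t$-window of length $U\approx x/H$ (up to logarithms) at height $T\lesssim U^2$. Step~2 fails for two reasons.

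First, your numerology is wrong. For $T\approx (x/H)^2$ one has $T^{25/42}\approx (x/H)^{25/21}>x/H$ for every $H<x$. So the window is never as long as $T^{25/42}$; your own last paragraph says it is about $T^{1/2}$. Hence \eqref{eq-moment} always costs a factor of roughly $U^{4/21}$ over the Lindel\"of-on-average bound, and nothing in your sketch absorbs this loss.

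Second, and more seriously, you aim the hypothesis at the wrong pieces. For $\Lambda$, the Type I, Type I$_2$ and Type III configurations are already handled unconditionally with power savings in the proof of Theorem~\ref{thm:mangoldt}. Lemma~\ref{lem:typei2} uses only the unconditional fourth moment and needs only $H\ge x^{121/210+\eps}$. What breaks down when $q\le(\log x)^A$ is Type II. There, mean values give only $\sqrt{qx}(\log x)^{O(1)}$, and the saving in Lemma~\ref{lem:type-ii} comes entirely from the prime polynomial $P(s,\chi)$ supplied by Ramar\'e's identity, which has no analogue for primes. That is exactly why Theorem~\ref{thm:mangoldt} carries an exceptional set, so invoking ``the large value estimates already used for Theorem~\ref{thm:mangoldt}'' for Type II is circular. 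A Matom\"aki--Ter\"av\"ainen factorization likewise needs a small prime factor, and even for $\mu$ it gives only $(\log x)^{-1/3+\eps}$. Pointwise Vinogradov--Korobov savings on single factors do not help by themselves either: in the diagonal-dominated case the mean value theorem allows correspondingly more large values, and the saving cancels.

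\textbf{The missing idea: count zeros instead.} In this regime the paper abandons the Heath-Brown decomposition. It uses Pan--Pan's explicit-formula reduction, which bounds the sum by $H(\log x)^{-A}+H(\log x)^{O(1)}\max_\sigma x^{\sigma-1}N(\sigma,T_1,F_1(\beta),q)$ with $T_1\le F_1(\beta)^{2+\eps}$. It then proves the short-window density bound $N(\sigma,T,U,q)\ll U^{50(1-\sigma)/21+\eps}$ for $T\le U^{2+\eps}$ (Theorem~\ref{pr}) by zero detection:
\begin{itemize}
\item zeros detected by the mollified Dirichlet polynomials are counted with the Guth--Maynard large value bound, which depends only on the window length $U$;
\item zeros detected by the mollified $L$-function are counted using \eqref{eq-moment} with $Y=U^{25/42}$, so the unavoidable loss $T^{25/42}\le U^{25/21+\eps}$ is absorbed exactly into the exponent $50/21$.
\end{itemize}
Finally, $(x/H)^{50/21}\le x^{1-\delta}$ holds for some $\delta>0$ exactly when $H\ge x^{29/50+\eps}=x^{0.58+\eps}$. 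The Vinogradov--Korobov zero-free region then supplies the $(\log x)^{-A}$ saving near $\sigma=1$. This is where $25/42$ and $0.58$ actually match.
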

Indeed, \cite[Theorem 4]{Zhan}  obtained an analogous estimate for intervals of length $H\geq x^{3/5}(\log x)^{O_{A}(1)}$, with $T^{9/16}$ in place of $T^{25/42}$ in the assumption (\ref{eq-moment}).
This theorem allows one to deduce the following consequence concerning the validity of the asymptotic formula for the number of representations of odd numbers as  sums of three primes lying in short intervals. To such an end we introduce, for odd $N\in\mathbb{N}$, the singular series
$$\mathfrak{S}(N)=\prod_{p|N}\Big(1-\frac{1}{(p-1)^{2}}\Big)\prod_{p\nmid N}\Big(1-\frac{1}{(p-1)^{3}}\Big).$$ We also denote by $\mathcal R(N,H)$ to the number of solutions of
$$N=p_{1}+p_{2}+p_{3},\ \ \ \ \ \ \ N/3-H\leq p_{i}\leq N/3+H,\ \ \ \ 1\leq i\leq 3,  $$ where $p_{i}$ are prime numbers.
\begin{corollary}\label{cor}
Let $N$ be a large odd integer and suppose that (\ref{eq-moment}) holds. Then, whenever $ N^{0.58+\varepsilon}\leq H\leq N^{1-\varepsilon}$ one has
$$\mathcal R(N,H)=3\mathfrak{S}(N)H^{2}(\log N)^{-3}+O(H^{2}(\log N)^{-4}).   $$
\end{corollary}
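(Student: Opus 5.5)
We prove Corollary~\ref{cor} with the circle method, where Theorem~\ref{thm1.3} handles the minor arcs. Put $X=N/3$ and write
\[
S(\alpha)=\sum_{X-H\leq n\leq X+H}\Lambda(n)e(n\alpha).
\]
Then the weighted count is
\[
\mathcal R_\Lambda(N,H)=\sum_{\substack{n_1+n_2+n_3=N\\ |n_i-X|\leq H}}\Lambda(n_1)\Lambda(n_2)\Lambda(n_3)=\int_0^1 S(\alpha)^3e(-N\alpha)\rd\alpha .
\]
Prime powers contribute a negligible amount. Also $\log p_i=\log N+O(1)$ on the range, so $\mathcal R(N,H)=(\log N)^{-3}\mathcal R_\Lambda(N,H)(1+O(1/\log N))$. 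It therefore suffices to show
\[
\mathcal R_\Lambda(N,H)=3\mathfrak S(N)H^2+O(H^2(\log N)^{-1}).
\]
The constant $3$ arises because the region $\{|n_i-X|\leq H,\ \sum n_i=N\}$ is a hexagon of area $3H^2$. (If the intended normalisation differs by a bounded factor, the same argument goes through.)

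Fix a large $A$ and let $B=O_A(1)$ be the exponent supplied by Theorem~\ref{thm1.3}. Take as major arcs $\mathfrak M$ the points $\alpha$ with $\|q\alpha\|\leq (\log N)^B/H$ for some $q\leq(\log N)^B$, and let $\mathfrak m$ be the complement. The interval $[X-H,X+H]$ has length $2H$ with $N^{0.58+\eps}\ll H$. For $\alpha\in\mathfrak m$, Theorem~\ref{thm1.3} applies under hypothesis (\ref{eq-moment}) and gives $|S(\alpha)|\leq H(\log N)^{-A}$. Parseval gives $\int_0^1|S|^2\ll H\log N$. Hence
\[
\int_{\mathfrak m}|S|^3\ll H^2(\log N)^{1-A},
\]
which is acceptable once $A\geq 3$.

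On $\mathfrak M$, write $\alpha=a/q+\beta$ with $|\beta|\leq(\log N)^B/(qH)$. Expand $S(a/q+\beta)$ through Dirichlet characters mod $q$. This requires the prime number theorem in short intervals $[y,y+h]$, for $h$ down to about $H(\log N)^{-C}$ and $y\asymp N$, uniformly in characters to moduli $q\leq(\log N)^B$. Huxley's zero-density estimate gives this for $h\geq N^{7/12+\eps}$, which covers our range since $0.58<7/12$ is false. So we argue more carefully: $7/12\approx0.5833>0.58$. We therefore instead use the zero-density input of the paper's large-value estimates, namely the Guth--Maynard bound giving primes in intervals of length $N^{17/30+\eps}$, extended to characters of modulus $(\log N)^{O(1)}$ (with Siegel--Walfisz handling exceptional zeros). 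The result is
\[
S(a/q+\beta)=\frac{\mu(q)}{\varphi(q)}\sum_{|n-X|\leq H}e(n\beta)+O\bigl(H(\log N)^{-A'}\bigr),
\]
obtained by partial summation over blocks of length $H(\log N)^{-C}$. Then the standard computation applies. The singular integral $\int_{\mathbb R}(\sum e(n\beta))^3e(-N\beta)\rd\beta$, truncated at $|\beta|\leq(\log N)^B/(qH)$, equals $3H^2+O(H^2q^2(\log N)^{-2B})$. The truncated singular series is $\mathfrak S(N)+O((\log N)^{-B/2})$. The errors are bounded by $|\mathfrak M|\cdot H^3(\log N)^{-A'}\ll H^2(\log N)^{2B-A'}$.

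The main obstacle is the major-arc approximation. The arcs have width $(\log N)^B/H$, not $x/H^2$, so the linear phase varies only logarithmically many times across the interval. The approximation therefore needs asymptotics for $\psi(y+h,\chi)-\psi(y,\chi)$ with $h=H(\log N)^{-O(1)}$ and $H$ as small as $N^{0.58+\eps}$. This is exactly a zero-density question beyond Huxley's exponent. I would first try to invoke the Guth--Maynard density bound $N(\sigma,T,\chi)\ll T^{30(1-\sigma)/13+\eps}$, summed over characters of polylogarithmic modulus. Otherwise I would check whether hypothesis (\ref{eq-moment}) itself, via Ingham's argument with the fourth moment over $[T,T+T^{25/42}]$, yields the required density bound directly.
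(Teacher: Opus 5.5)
Your proposal is correct and follows the route the paper intends: the circle method, with Theorem~\ref{thm1.3} controlling the minor arcs through $\sup_{\mathfrak m}|S|\cdot\int_0^1|S|^2$, and a Zhan-style major-arc evaluation producing $\mathfrak S(N)$ times the hexagon area $3H^2$. You are also right that the major arcs need a zero-density exponent beyond Huxley's (since $7/12>0.58$); this is the input the paper leaves implicit, and it is supplied either by Guth--Maynard as in Proposition~\ref{prop:major-arc} or by the conditional density bound of Theorem~\ref{pr}, whose exponent $50/21$ matches the threshold $1-21/50=0.58$ exactly.
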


Assuming Theorem~\ref{thm1.3}, Corollary~\ref{cor} follows directly via an application of the circle method. We therefore omit the proof and refer the interested reader to the proof of Theorem~1.1 in \cite{Zhan} for details.

\subsection{Background and sketch outline}

In this subsection, we explain why the $3/5$ barrier arises in previous approaches, with the aim of clarifying how we arrived at our approach to breaking it. We then conclude by outlining the general strategy for proving the theorems.

By Taylor expansion, if $|\beta|\leq x/(H^2\log x)$,  one has
\begin{align*}
e(-x\beta)e(n\beta)&=e\bigbrac{x\beta(\log n-\log x)}+O\Bigbrac{\frac{|n-x|^2|\beta|}{x}}\\
&=x^{-2\pi i x\beta}n^{2\pi i x\beta}+O((\log x)^{-1})
\end{align*}
whenever $|n-x|\leq H$. Thus, writing $\alpha=a/q+\beta$, when $|q\beta|\leq x/(H^2\log x)$ and $q\leq(\log x)^{O(1)}$, we morally have
\[
e(n\beta)\approx c\,n^{2\pi i x\beta},
\]
where $c=e(x\beta)x^{-2\pi i x\beta}$ has modulus one. Consequently,
\begin{align}\label{eq:intro-beta-small}
\Big|\sum_{x<n\leq x+H}\mu(n)e(n\alpha)\bigg|
\leq
\Big|\sum_{x<n\leq x+H}\mu(n)e(an/q)n^{2\pi i x\beta}\bigg|
+O\bigbrac{H(\log x)^{-1}}.
\end{align}
Since $q$ is small and $n^{2\pi i x\beta}$ is multiplicative, methods for studying the M\"obius function in short  arithmetic progressions with small moduli should also apply in this setting.

Therefore, in what follows, we assume that $\alpha$ is highly irrational, in the sense that $\norm{q\alpha}\gg x(\log x)^{O(1)}/H^2$ for every $1\leq q\leq(\log x)^{O(1)}$.

The classical approach to handling highly irrational frequencies goes back to Vinogradov and relies on what are now known as the \emph{Type I} and \emph{Type II} estimates for exponential sums. To illustrate this approach, we apply Heath-Brown's identity \cite{HB} to the M\"obius function, followed by a suitable dyadic decomposition. This reduces the problem to estimating sums of the form

\begin{equation}\label{h2}
\twosum{x<m_1\cdots m_l\leq x+H}{M_i<m_i\leq 2M_i}a_1(m_1)\cdots a_l(m_l) e( m_1\cdots m_l \alpha),
\end{equation}
where $l\geq 1$, $M_1\cdots M_l\asymp x$, $a_i\in \set{1,\mu}$, and $M_i\leq x^{1/l}$ for each $a_i=\mu$. Depending on the sizes of the $M_i$, one seeks either a Type I estimate (if there exists some $a_i=1$ such that $M_i\geq x/H$) or a Type II estimate (obtained by grouping the variables into two suitable blocks).  The work in \cite{MSTT} also introduced \emph{Type I$_2$} estimates by exploring two-dimensional arithmetic progressions.

To describe the resulting combinatorial alternatives which arise in their argument, write $M_i=x^{\gamma_i}$ and $H=x^\theta$. For $\theta>3/5$, the methods developed in \cite[Section 4]{MSTT} (see also \cite[Proposition 6.6]{MPPTW}) can be used to control the following alternatives:
\begin{enumerate}
	\item There exists some $\gamma_i>1-\theta$;
	\item There exist two distinct $\gamma_i\neq \gamma_j$ such that $\gamma_i+\gamma_j>c(1-\theta)$ with $c=\frac32$;
	\item There exists a disjoint partition $I\sqcup J=\set{1,\dots,l}$ such that 
	 \[
	 1-\theta \leq  \sum_{i\in I}\gamma_i,\sum_{j\in J}\gamma_j \leq \theta.
	\] 
\end{enumerate}
We emphasize that the approach of Matom\"aki, Shao, Tao, and Ter\"av\"ainen~\cite{MSTT} applies more generally to nilsequences of finite complexity. For linear phases, similar alternatives, with $c=4/3$ replacing $3/2$ in (2), had already been introduced and analysed by Zhan~\cite{Zhan}. In the spirt of \eqref{eq:intro-beta-small}, one can approximate  $\alpha\in\T$ by a rational number $a/q$ with denominator $q\leq H^2/(x(\log x)^{O(1)})$ and tail $\lvert \beta\rvert\leq x(\log x)^{O(1)}/H^2$, and then use Dirichlet characters to represent the periodic factor $e(an/q)$. After reducing the target function to a product $a_1(m_1)\cdots a_l(m_l)$ as in \eqref{h2}, a standard application of Perron's formula reduces the problem to estimating
\begin{align}\label{eq:intro-dirichlet-poly}
\sum_{\chi \mod q}\sup_{|T| \leq |\beta|x} \int_{T}^{T+\frac xH} \Bigabs{\twosum{x<m_1\cdots m_l\leq 2x}{M_i<m_i\leq 2M_i}\frac{a_1(m_1)\chi(m_1)\cdots a_l(m_l)\chi(m_l)}{(m_1\cdots m_l)^{\frac12+it}}} \rd t.
\end{align}
Zhan obtained the Type I$_2$ estimates (case 2) using the fourth-moment bound for Dirichlet $L$-functions in short intervals 
\begin{align}\label{eq:intro-fourth-moment}
\int_T^{T+T_0} \Bigabs{L\Big(\frac12+it,\chi\Big)}^4 \rd t \ll (qT)^{\eps}(qT_0+qT^{\frac23 }) \qquad \text{ when } T^{1/2}\leq T_0\leq T^{2/3}.
\end{align}

Since the exponents $0\leq \gamma_j\leq 1$ with $\sum_i\gamma_i=1$ are arbitrary, one could in particular take the values $(\gamma_1,\cdots,\gamma_l)=(\frac15,\frac15,\frac15, \frac15,\frac15, 0,\cdots,0)$. However, this configuration satisfies none of these alternatives when $\theta<3/5$ and therefore presents a barrier to applying either approach to intervals shorter than $x^{3/5}$.

Our idea is to introduce a new alternative, which we call the \emph{Type III case}, by partitioning $\set{1,\dots,l}$ into three disjoint sets and applying large value estimates for Dirichlet polynomials twisted by characters, allowing us to handle the configuration $(\gamma_1,\cdots,\gamma_l)=(\frac15,\frac15,\frac15, \frac15,\frac15, 0,\cdots,0)$. However, several obstacles arise. First, the term $qT^{2/3+\eps}$ in~\eqref{eq:intro-fourth-moment} is too large when $T_0$ is close to $T^{1/2+\eps}$ in the Type I$_2$ analysis (case 2) if one ought to sharpen the exponent $\theta=3/5$. This can be circumvented by sharpening the dependence on $q$ combining the spectral theoretic approach employed in \cite[Theorem 4]{JM} with an admissible exponent towards the Ramanujan--Petersson conjecture.

Moreover, the range $q\leq H^2/(x(\log x)^{O(1)})$ in~\eqref{eq:intro-dirichlet-poly} turns out to be too large for the treatment of Type~III sums, since applying existing large-value estimates for Dirichlet characters in this range of $q$ also prevents us from handling intervals of length shorter than $x^{3/5}$. This motivates us to develop a  flexible stationary-phase argument that accommodates a wider range of $|\beta|$. We achieve this by observing that when $|\beta|\leq x/H^2$ the stationary points lie in an interval of length $x/H$; while when $x/H^2\leq |\beta| \leq 1$ the stationary points occur over a longer interval of length $O(|\beta|H)$.

We should also remark that, in our analysis of each of the four alternatives, different Dirichlet approximations are required in order to obtain a smaller admissible value of $H$. This, in turn, requires an initial application of Heath-Brown's identity to decompose the exponential sums into sums of the shape ~\eqref{h2}, and then an application of Dirichlet approximation with a parameter $Q$ depending on the sizes of the $M_i$ in~\eqref{h2}. This introduces additional technical complications into the proof. As far as we are aware, this approach has not previously been pursued in the literature for problems of this flavour.


 It is worth noting an important distinction between our approach and Vinogradov's method. The latter reduces the problem to equidistribution estimates for exponential sums, thereby discarding some of the arithmetic structure of the functions $\mu$ and $\Lambda$. By contrast, our approach to~\eqref{eq:intro-dirichlet-poly}, uniformly in $\alpha$, is closer to methods for studying primes in short intervals without using zero-density estimates. Large value estimates (twisted by characters) therefore play a crucial role, and improvements to these estimates should allow us to treat shorter intervals. 
 
 To conclude this section, we also point out that, very recently, Maynard, Pandey, and Radziwi{\l}{\l}~\cite{MPR} have also used products of three Dirichlet polynomials to sharpen the upper bound for the exponential sum $$\sum_{n\leq x}\Lambda(n)e(n\alpha).$$ They observed that zero-density arguments yield weaker bounds. This prompted us to examine what the corresponding zero-density estimates would give in our setting, where the same phenomenon occurs. Indeed, this direction had already been taken by \cite{PanPan}, wherein they obtained the desired result for intervals of length $H\geq x^{2/3}(\log x)^{O_{A}(1)}$. Following this approach and inserting the zero density estimates in short intervals which would be derived from the large value estimates in \cite[Theorem 1.1]{CGY} would lead to non-trivial cancelation for intervals of length at least $H\geq x^{0.65+\varepsilon}$, such a conclusion being far worse than what is obtained in Theorem \ref{thm:mobius}.

\subsection*{Limiting cases}
In the proofs of the above theorems, there are some limiting cases for the lengths of the Dirichlet polynomials arising from Heath-Brown's identity which force one to impose the restriction $H\geq x^{0.58+\varepsilon}$, thereby preventing any substantial improvement without new arithmetic information. The particular case in which there are five Dirichlet polynomials of length $x^{1/5+o(1)}$ entails bounding the large value set of the polynomials ``corresponding to the $7/10$-line", and this does not seem to be tractable whenever $H\leq x^{0.58}$ with the current technology available.

\subsection*{Organization of the paper} We organise the paper as follows: Section \ref{sec2} includes a collection of preliminary lemmata involving mean value estimates, large value estimates and pointwise estimates for Dirichlet polynomials. In Section \ref{sec3} we briefly present the major arc analysis. We then apply Ramare's identity to extract a small prime factor from $\mu$, and subsequently use Heath-Brown's identity in Section~\ref{sec4} to reduce the estimate to that of $\mu_N$ in \eqref{mobius-n}, where $\mu_N$ depends on the sizes of $(N_1,\dots,N_{2k})$. In Section \ref{sec5} we use stationary phase to reduce the problem to that of estimating mean values of Dirichlet polynomials. We analyse Type I and II sums, Type I$_2$ sums and Type III sums in Sections \ref{sec6}, \ref{sec7} and \ref{sec8} respectively. We then conclude the proof of Theorem \ref{thm:mobius} in Section \ref{sec9}, and include those of Theorems \ref{thm:mangoldt} and \ref{thm1.3} in Sections \ref{sec10} and \ref{sec11} respectively.
 
\subsection*{Notation}

For $\alpha\in\mathbb{R}$ then $\|\alpha\|$ denotes the distance to the nearest integer. We write $f=\Omega(g)$ if $f\gg g$ and $f=\Theta(g)$ whenever $f\asymp g$. Throughout the paper, $\eps>0$ denotes a sufficiently small constant that may vary from occurrence to occurrence. Taking the $\eps$ in the lower bound for $H$ in Theorem~\ref{thm:mobius} as a reference, other occurrences may denote fixed positive multiples of it, such as $100\eps$ or $\eps/100$.

\subsection*{Acknowledgement}
The authors are grateful to Kaisa Matom\"aki and James Maynard for their helpful suggestions and discussions. JP was partially supported by the project PID2024-156180NB-I00 funded by MICINN-Spain. MW was supported by the Swiss National Science Foundation grant TMSGI2-211214. 

\subsection*{AI declaration}
The authors used ChatGPT Pro to improve the language in parts of the manuscript. 

\section{Preliminaries}\label{sec2}

In this section, we collect several technical lemmas that will be used throughout the paper. Most of them concern Dirichlet polynomials, including pointwise bounds, mean values estimates, and large value estimates.



\subsection{Mean values and large values of Dirichlet polynomials}

\begin{lemma}[Mean value estimate]\label{lem:mean-value}
 Suppose that $T,T'\geq 0$ with $T,T'\in \R$ and $q\in\mathbb{N}$. If $f$ is a $k$-divisor bounded function, then
\[
\sum_{\chi \mod q} \int_{T}^{T+T'} \Bigabs{\sum_{N<n \leq 2N}\frac{f(n)\chi(n)}{n^{1/2+it}}}^2 \rd t\ll_k(\log q(T+T'+2)N)^{O_k(1)} (qT'+N).
\]
\end{lemma}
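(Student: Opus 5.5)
This is the standard hybrid mean value theorem for character-twisted Dirichlet polynomials. I would deduce it from the large sieve inequality with a smooth weight. Write $a_n = f(n)n^{-1/2}\mathbf 1_{N<n\le 2N}$. The first step is to reduce to primitive characters. Each $\chi \bmod q$ is induced by a primitive $\chi^*\bmod r$ with $r\mid q$, and $\chi(n)=\chi^*(n)\mathbf 1_{(n,q)=1}$. Replacing $a_n$ by $a_n\mathbf 1_{(n,q)=1}$ does not increase $\sum|a_n|^2$. Grouping characters by their conductor $r\mid q$ then bounds the left-hand side by
\[
\sum_{r\mid q}\ \sideset{}{^*}\sum_{\chi \bmod r}\int_T^{T+T'}\Bigabs{\sum_n a_n\mathbf 1_{(n,q)=1}\chi(n)n^{-it}}^2\rd t .
\]

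Next I would apply, for each $r$, the hybrid large sieve of Gallagher/Montgomery:
\[
\sideset{}{^*}\sum_{\chi\bmod r}\int_{T}^{T+T'}\Bigabs{\sum_{n\le 2N}b_n\chi(n)n^{-it}}^2\rd t\ll (rT'+N)\sum_n|b_n|^2 .
\]
This inequality is translation invariant in $t$, since the shift $b_n\mapsto b_nn^{-iT}$ preserves $|b_n|$, so only the window length $T'$ enters. I would prove it, or cite it as standard, via the duality/Gallagher lemma: majorize the indicator of $[T,T+T']$ by a smooth weight whose Fourier transform is supported in $[-1/T',1/T']$, expand the square, and use orthogonality of characters combined with the spacing $|\log(m/n)|\gg |m-n|/N$. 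Alternatively one can proceed directly from Montgomery's Theorem 6.4 in his \emph{Topics}.

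Summing over $r\mid q$ gives the factor
\[
\sum_{r\mid q}(rT'+N)\ll \sigma(q)T'+\tau(q)N\ll (\log\log q+1)\,qT'+\tau(q)N .
\]
This is where the shape of the bound matters. The factor $\tau(q)$ is not bounded by a power of $\log q$ in general, but only by $q^{\eps}$. To avoid it, I would not split by conductor. Instead I would use the non-primitive form directly: for $\chi\bmod q$ summed over all characters, orthogonality gives $\sum_{\chi}\chi(m)\bar\chi(n)=\varphi(q)\mathbf 1_{m\equiv n\ (q),\,(mn,q)=1}$. Then in the smoothed expansion the diagonal $m=n$ contributes $\varphi(q)T'\sum|a_n|^2$. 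The off-diagonal terms have $m\equiv n\bmod q$ with $m\ne n$, so $|m-n|\ge q$, and the weight $\hat w(T'\log(m/n)/2\pi)$ forces $|m-n|\ll N/T'$. Bounding them by $|a_m|^2+|a_n|^2$ gives a contribution $\ll\varphi(q)T'\cdot (N/(qT'))\sum|a_n|^2\ll N\sum|a_n|^2$. This yields $(qT'+N)\sum|a_n|^2$ cleanly, with no divisor loss.

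The final step is $\sum_{N<n\le 2N}|f(n)|^2n^{-1}\ll N^{-1}\sum_{n\le 2N}\tau_k(n)^2\ll(\log N)^{k^2}$, which is $(\log\cdots)^{O_k(1)}$ as claimed. Here I read "$k$-divisor bounded" as meaning $|f|\le\tau_k$.

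The main obstacle is the off-diagonal count in the smoothed expansion. One needs a smooth majorant $w\ge\mathbf 1_{[T,T+T']}$ with $\int w\ll T'$ and $\hat w$ compactly supported at scale $1/T'$, for instance a Fejér-type kernel. When $T'<1$ one takes window length $1$ instead. The $\log(T+T'+2)$ factor in the statement leaves room for this trivial adjustment.
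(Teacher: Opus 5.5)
Your proposal is essentially the paper's route. The paper simply cites the standard single-modulus hybrid mean value theorem (Iwaniec--Kowalski, Theorem 9.12), and your Fej\'er-majorant plus orthogonality argument over all $\chi \bmod q$ is the usual proof of that theorem. You are right to avoid the split by conductor, which would cost a factor $\tau(q)$.

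One correction: drop the ``window length $1$'' step for $T'<1$.

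\begin{itemize}
\item It gives the wrong bound: it only yields $(q+N)\sum|a_n|^2$, and the logarithmic factor cannot absorb the ratio $(q+N)/(qT'+N)$ when $q\gg N$ and $T'$ is small.
\item It is unnecessary: the bounds $|\hat w|\le \int w\ll T'$ and $\#\{n\neq m:\ n\equiv m \pmod{q},\ |n-m|\le cN/T'\}\le 2cN/(qT')$ hold for every $T'>0$, so your argument gives $(qT'+N)\sum|a_n|^2$ uniformly in $T'$.
\end{itemize}
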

\begin{proof}
See \cite[Theorem 9.12]{IK}.	
\end{proof}

\begin{lemma}[Second moment in short intervals]\label{lem:second-moment}
Suppose that $q\geq 1$, $T'\geq T^{1/3} $ and $N\leq (T+T')^4$ with $T\geq 0$.  Then,
\[
 \sum_{\substack{\chi \mod q}}\int_{T}^{T+T'}\Bigabs{\sum_{N<n \leq 2N}\frac{\chi(n)}{n^{1/2+it}}}^2 \rd t\ll qT'(\log q(T+T'+2))^3+\frac{N}{1+T}. \]
Likewise,
\[
 \sum_{\substack{\chi \mod q}}\int_{T}^{T+T'}\Bigabs{\sum_{N<n \leq 2N}\frac{\chi(n)\,\log n}{n^{1/2+it}}}^2 \rd t\ll qT'(\log q(T+T'+2))^{5} +\frac{N(\log q(T+T'+2))^{2}}{1+T}.
\] 
\end{lemma}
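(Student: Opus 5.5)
We plan to write the sum over $n$ as the value at $s=\tfrac12+it$ of the partial sum of $L(s,\chi)$, and to bound the resulting short-interval mean value with Lemma~\ref{lem:mean-value} after an approximate functional equation reduction. First we split the characters. For non-principal $\chi$ (say induced by a primitive character $\chi^*$ mod $q^*\mid q$), and for $\chi_0$, we write
\[
\sum_{N<n\leq 2N}\frac{\chi(n)}{n^{1/2+it}}=\sum_{n\leq 2N}-\sum_{n\leq N},
\]
so it suffices to treat $S_X(t)=\sum_{n\leq X}\chi(n)n^{-1/2-it}$ with $X\asymp N$. For $\chi_0$ we also extract the main term coming from the pole. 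Partial summation against $\sum_{n\leq u}\chi_0(n)=\tfrac{\varphi(q)}{q}u+O(2^{\omega(q)})$ gives a main term of size $\asymp \frac{\varphi(q)}{q}N^{1/2}/(1+|t|)$. Integrated over $[T,T+T']$, its square contributes
\[
\ll \frac{N}{1+T}\int_T^{T+T'}\frac{(1+T)}{(1+|t|)^2}\,\rd t\ll \frac{N}{1+T}.
\]
This accounts for the second term of the bound.

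Next we handle the remaining oscillatory parts according to the size of $N$ relative to $T+T'$. If $N\leq q(T+T')$-ish, more precisely if $N\ll qT'$, we apply Lemma~\ref{lem:mean-value} directly; it gives $\ll qT'+N\ll qT'$, up to logs. Care is needed to get the exact power $(\log)^3$, so we would rerun the Montgomery–Vaughan/large sieve mean value with $f=1$, which gives $(qT'+N)$ times $\sum 1/n$, i.e.\ one log. For $N$ larger, we use the standard truncated approximate expression, valid for $|t|\asymp T+T'$ and $N\geq q(|t|+2)$:
\[
\sum_{n\leq N}\chi(n)n^{-s}=L(s,\chi)+(\text{pole term for }\chi_0)+O\bigl(q^{1/2}N^{-1/2}\log q\bigr).
\]
This follows from partial summation with Pólya–Vinogradov, or from van der Corput for $t$ large, using $N\gg qT$. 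The difference of two such partial sums at $X=N,2N$ thus cancels the $L$-value entirely, leaving an error whose square integrates to $\ll qT'(\log)^2/N\ll T'$. In the intermediate range $qT'\ll N\ll q(T+T')$, we use the approximate functional equation. It writes $S_X(t)$ as a sum of length $\ll X$ plus $\chi$-dual sums of length $q|t|/X$ times a factor of modulus $\asymp X^{0}$. The dual piece has length $\ll q(T+T')/N$. For $N\ge (T+T')^{1/2}q^{1/2}$ that length is at most $N$, and $\le q T'$ as long as $T'\ge T^{1/3}$ and $N\leq (T+T')^4$.

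The hypotheses $T'\geq T^{1/3}$ and $N\le (T+T')^4$ must enter exactly here. They guarantee that the dual polynomial length $q(T+T')/N$ is $\ll qT'$ once $N\geq (T+T')^{2/3}$. The remaining range $N\le (T+T')^{2/3}\le T'^{2}$ is then covered by a direct $\sum_\chi\int$ bound, combined with subdivision of the $t$-interval into pieces of length $N/q$ where needed. Applying Lemma~\ref{lem:mean-value} to each dual piece gives $qT'(\log)^{3}$ after accounting for the logarithms from the dyadic and smoothing errors.

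The log-weighted statement follows identically. Either we differentiate in $s$, with $\log n$ weights corresponding to $-L'$, or we use partial summation in $n$ from the unweighted bound over subintervals. Partial summation introduces two extra logarithms, in both the $qT'$ and the $N/(1+T)$ terms. The main obstacle is the middle range, where the approximate functional equation must be applied uniformly in $q$ and non-primitive characters. Getting the clean $(\log)^3$ there without $q^\eps$ losses requires passing to primitive characters and tracking the $2^{\omega(q)}$ factors, which we would absorb using $\sum_{q^*\mid q}\varphi(q^*)\leq q$.
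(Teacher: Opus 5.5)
There is a genuine gap, and it lies in the range you call ``remaining''. Lemma~\ref{lem:mean-value} (or the hybrid Montgomery--Vaughan mean value theorem) applied to the original polynomial gives $qT'+N$. Applied to the dual polynomial of length $M\asymp q(T+T')/N$ from the approximate functional equation, it gives $qT'+M$. Both bounds are $\ll qT'$ only when $N\ll qT'$ or $N\gg (T+T')/T'$.

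The window $qT'\ll N\ll (T+T')/T'$ is mapped to itself by $N\mapsto q(T+T')/N$, so duality cannot move you out of it. It is nonempty as soon as $T'\ll (T/q)^{1/2}$. For example, take $q=1$, $T'=T^{1/3}$, $N=T^{1/2}$: both bounds give $T^{1/2}$, while the lemma asserts $T^{1/3}(\log T)^3$.

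Your proposed remedy, subdividing $[T,T+T']$ into pieces of length $N/q$, cannot help. When $N>qT'$ such pieces are longer than the interval itself. More generally, cutting into $k$ pieces turns $qT'+N$ into $qT'+kN$. The term $N$ is sharp in the mean value theorem for arbitrary coefficients, so beating it needs arithmetic input about $\chi(n)$. As written, your argument proves the statement only for $T'\gg (T/q)^{1/2}$, not down to $T^{1/3}$. The hypothesis $T'\ge T^{1/3}$ never actually closes anything through the dual length.

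The missing ingredient is a short-interval second moment for $L(\tfrac12+it,\chi)$ valid on intervals of length at least $T^{1/3}$. This is \cite[Lemma~2]{Zhan}, which rests on Atkinson-type formulae rather than the large sieve. The paper reduces to it as follows.

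\begin{enumerate}
\item By partial summation it suffices to bound $\sum_\chi\int_T^{T+T'}|\sum_{n\le N}\chi(n)n^{-it}|^2\rd t$.
\item Apply truncated Perron's formula with $U=(q(T+T'))^5+1$ and shift the contour to $\Re s=\tfrac12$. Convexity controls the horizontal segments, and $N\le (T+T')^4$ makes those errors $O(1)$. The pole of $L(s,\chi_0)$ produces the $N/(1+|t|)$ term.
\item This expresses the sum through $\int_{-U}^{U}L(\tfrac12+i(t+u),\chi)\,N^{1/2+iu}(\tfrac12+iu)^{-1}\rd u$.
\item Cauchy--Schwarz in $u$ with weight $(1+|u|)^{-1}$ leaves second moments of $L$ over shifted intervals $[T+u,T+T'+u]$.
\item For $|u|\le T+T'$ these are handled by Zhan's lemma; this is where $T'\ge T^{1/3}$ enters. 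For larger $|u|$ they are handled dyadically by the long-interval mean square \cite[Theorem 10.1]{montgomery}.
\end{enumerate}

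Your treatment of the principal-character main term, of the range $N\gg q(T+T')$, and of the $\log n$ weights by partial summation is fine. Without an input of the strength of Zhan's lemma, however, the middle range cannot be closed.
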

\begin{proof}

This follows from a truncated version of \cite[Lemma 2]{Zhan}, which concerns the second moment of Dirichlet $L$-functions in short intervals. We briefly discuss the argument below, which essentially follows \cite[Corollary 2.14]{MRTI}.	

We first focus on the first estimate.  Set $\mathcal D{\bf1}_I(s,\chi) =\sum_{n\in I}\chi(n)n^{-s}$ for a  finite interval $I\subseteq\Z$. The Fundamental theorem of calculus  gives
\begin{align}\label{eq:2.1}
\mathcal D {\bf1}_{(N,2N]}\Big(\frac12+it, \chi\Big) =\frac{1}{(2N)^{1/2}} \mathcal D {\bf1}_{(N,2N]}(it,\chi) + \int_N^{2N} \mathcal D {\bf1}_{(N,x]} (it,\chi) \frac{\rd x}{2x^{3/2}}. 
\end{align}
By the triangle inequality, it therefore suffices to prove that
\begin{align}\label{eq:second-estimate}
\sum_{\chi \mod q} \int_{T}^{T+T'}\bigabs{ \mathcal D{\bf1}_{[1,N]} (it,\chi)}^2 \rd t \ll qT'N(\log q(T'+T+2))^{3} + \frac{N^2}{(1+T)}.
\end{align} 

Employing the truncated Perron's formula (cf. \cite[Lemma 2.4]{MRTI}) for the choice $U=(q(T'+T))^5+1$ one has
\[
\bigabs{ \mathcal D{\bf1}_{[1,N]} (it,\chi)} \ll \biggabs{ \int_{-U}^{U} L\Bigbrac{1+\frac{1}{\log N} +i(t+u),\chi} \frac{N^{1+\frac{1}{\log N} + iu}}{1+\frac{1}{\log N} +iu} \rd u} + \frac{N(\log 2U)}{U}+1.
\]
We shall shift the contour to the line with real part $1/2$. To such an end, it is useful recalling the convexity bound  \cite[Theorem 3]{Ramachandra-conve}, which asserts that
 \[
 L(\sigma +it,\chi) \ll (q(1+|t|))^{\eps+\max\set{0, \frac{1-\sigma}{2}}}
 \]
  uniformly for $1/2\leq \sigma \leq 2$ and $|t|\geq 1$, where   $\eps>0$ is arbitrary but fixed. Observing that the only possible pole arises from the principal character $\chi_0$, its contribution being $O(\frac{N}{1+|t|})$, we obtain that
\begin{align}\label{eq:2.2}
\bigabs{ \mathcal D{\bf1}_{[1,N]} (it,\chi)} \ll  &\biggabs{ \int_{-U}^{U} L\Bigbrac{\frac12+i(t+u),\chi} \frac{N^{\frac12 + iu}}{\frac12 +iu} \rd u} +1 +\frac{N(qU)^{\varepsilon}}{U}\nonumber
\\
&+\frac{(qU)^{1/4}N^{1/2}}{U}(qU)^{\varepsilon}+{\bf1}_{\chi=\chi_0} \frac{N}{1+|t|}.
\end{align}

It is worth noting that the assumption $N\ll U^{4/5}$ entails that the terms $N(qU)^{\varepsilon}U^{-1}$ and $(qU)^{1/4}N^{1/2}(qU)^{\varepsilon}U^{-1}$ are $O(1)$. Applying H\"older's inequality with respect to the variable $u$ and then splitting the range of integration for the $u$-integral into $|u|\leq T'+T$ and the complementary set, and making a dyadic dissection, we obtain
\begin{align}\label{hhh}
&\sum_{\chi \mod q} \int_{T}^{T+T'}\bigabs{ \mathcal D{\bf1}_{[1,N]} (it,\chi)}^2 \rd t\nonumber\\
\ll& N(\log2 U) \sum_{\substack{\chi \mod q\\ }} \int_{T}^{T+T'}\int_{-U}^{U} \frac{\bigabs{ L\bigbrac{\frac12+i(t+u),\chi}}^2}{1+|u|} \rd u \rd t+q T'+\frac{N^2}{1+T}\nonumber\\
\ll& N (\log2U)^2 \sup_{|u|\leq (T'+T)}  \sum_{\substack{\chi \mod q\\ }} \int_{T}^{T+T'}  \Bigabs{L\Big(\frac12+i(t+u),\chi\Big)}^2 \rd t+q T'+\frac{N^2}{1+T}\\
&+ 	N (\log2U)^2 \sup_{(T'+T)\leq V\leq U} \int_{T}^{T+T'}   \sum_{\substack{\chi \mod q\\ }} \int_{V}^{2V} \frac{\bigabs{L(\frac12+i(t+u),\chi)}^2}{1+|u|} \rd u \rd t.\nonumber
\end{align}
By the second-moment estimate for Dirichlet $L$-functions over long intervals (cf. \cite[Theorem 10.1]{montgomery}), the last term is $O(qNT' (\log qU)^3)$, as desired. For the first integral when $T'\leq T$ we apply \cite[Lemma 2]{Zhan}, while otherwise we use \cite[Theorem 10.1]{montgomery}. In either case, we again infer that it is $O(qNT' (\log q(T+T'+2))^3)$, whence the estimate \eqref{eq:second-estimate} follows accordingly.

As for the second estimate, employing summation by parts we deduce that
\[
\Bigabs{\sum_{N<n \leq 2N}\frac{\chi(n)\,\log n}{n^{s}}} \ll (\log N) \Bigabs{ \sum_{N<n \leq 2N}\frac{\chi(n)}{n^s}} +\frac1N \sum_{N<N'\leq 2N} \Bigabs{ \sum_{N<n \leq N'}\frac{\chi(n)}{n^s}}.
\]
Therefore, we may remove the logarithmic weight at a cost of $O(\log2N)$, and hence $O((\log 2N)^2)$ in the second moment. Since $\log2N\ll\log2q(T+T'+2)$, this proves the second estimate and completes the proof.
\end{proof}

\begin{lemma}[Fourth moment in short intervals]\label{lem:fourth-moment}
	Let $q\in\mathbb{N}$ with $\log q\ll \log T$ and $T,T'\geq 0$, and $\eps>0$ be sufficiently small. Suppose that $ T'\gg T^{\eps }$ and $N \leq (T+T')^4$. Then one has
\[
\sum_{\chi \mod{q}}\int_{T}^{T+T'}\Bigabs{\sum_{N<n \leq 2N}\frac{\chi(n)}{n^{1/2+it}}}^{4}\rd t\ll T^{\varepsilon}(qT'+q^{71/96}T^{2/3}) + \frac{N^2}{1+T^{3}}.
\]
If in addition $T'=T$, the same bound holds with $(\log T)^{O(1)}$ replacing the $T^{\varepsilon}$ factor. The same estimates hold with $\chi(n)$ replaced by
$\chi(n)\log n$ and the last summand having an extra $(\log T)^{O(1)}$ factor.
\end{lemma}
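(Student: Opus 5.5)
We follow the template of the proof of Lemma~\ref{lem:second-moment}, replacing the second moment of $L(\tfrac12+it,\chi)$ by a short-interval fourth moment with improved $q$-dependence.

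\textbf{Step 1: reduction to partial sums.} By the partial summation identity \eqref{eq:2.1} and H\"older's inequality (fourth power in place of square), it suffices to bound
\[
\sum_{\chi \mod q}\int_T^{T+T'}\bigabs{\mathcal D{\bf1}_{[1,N']}(it,\chi)}^4\rd t,\qquad N'\leq 2N,
\]
with a loss of $N^2$. Concretely, we aim for
\[
N^2T^{\eps}\bigl(qT'+q^{71/96}T^{2/3}\bigr)+\frac{N^4}{1+T^3}.
\]

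\textbf{Step 2: Perron and contour shift.} Apply truncated Perron with $U=(q(T+T'))^5+1$ and shift the contour to $\operatorname{Re} s=\tfrac12$, exactly as in \eqref{eq:2.2}. The convexity bound together with $N\leq U^{4/5}$ makes the error terms $O(1)$. The principal character contributes a pole term $N/(1+|t|)$. Its fourth power integrated over $[T,T+T']$ is $\ll N^4/(1+T)^3$, which is the last summand. (When $T$ is small, the trivial range is absorbed similarly.)

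For the main term we apply H\"older in $u$ with weight $(1+|u|)^{-1}$:
\[
\Bigl|\int_{-U}^{U} L\,\frac{N^{1/2+iu}}{\tfrac12+iu}\rd u\Bigr|^4\ll N^2(\log U)^3\int_{-U}^U\frac{|L(\tfrac12+i(t+u),\chi)|^4}{1+|u|}\rd u .
\]
Swapping the order of integration and dyadically splitting in $u$ reduces matters to bounding
\[
\sum_\chi\int_{T+u}^{T+u+T'}\bigabs{L(\tfrac12+iv,\chi)}^4\rd v
\]
uniformly in $|u|\leq T+T'$. The dyadic blocks $|u|\sim V>T+T'$ are handled by the long-interval fourth moment $\sum_\chi\int_V^{2V}|L|^4\ll (qV)^{1+\eps}$, which after dividing by $V$ and multiplying by $T'$ gives $\ll qT'T^{\eps}$. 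This last step uses $\log q\ll\log T$ to convert $q^{\eps}$ into $T^{\eps}$.

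\textbf{Step 3: the short-interval fourth moment.} This is the key input and the main obstacle. We need, for $T^{\eps}\ll T'\leq T$,
\[
\sum_{\chi \mod q}\int_T^{T+T'}\bigabs{L(\tfrac12+it,\chi)}^4\rd t\ll T^{\eps}\bigl(qT'+q^{71/96}T^{2/3}\bigr).
\]
The plan is to invoke the spectral approach of \cite[Theorem 4]{JM}. The twisted fourth moment over all characters is written through the approximate functional equation as a shifted convolution of $d(n)$ with $n\equiv m \mod q$. This is then treated by the Kuznetsov formula for $\Gamma_0(q)$, and the exceptional eigenvalue contribution is bounded using the Kim--Sarnak exponent $\theta=7/64$. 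One verifies that the off-diagonal spectral terms give $q^{1/2+\theta\cdot c}T^{2/3}$ with the combination yielding $q^{71/96}$. The hardest part is tracking the $q$-power through \cite{JM}; if that computation does not close, we would smooth the interval by a bump of width $T'$ and use their smoothed statement directly. For $T'>T$ the estimate follows from the long-interval bound.

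\textbf{Step 4: variants.} When $T'=T$, we use the classical full fourth moment $\sum_\chi\int_T^{2T}|L|^4\ll qT(\log qT)^{O(1)}$, which gives the $(\log T)^{O(1)}$ version. For the weight $\chi(n)\log n$, partial summation as in the end of the proof of Lemma~\ref{lem:second-moment} removes the logarithm at the cost of $(\log N)^4$ in the fourth moment. This accounts for the extra $(\log T)^{O(1)}$ factor, including on the last summand.
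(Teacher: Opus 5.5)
Your proposal is essentially the paper's proof: the truncated Perron/contour-shift template of Lemma~\ref{lem:second-moment} with H\"older in place of Cauchy--Schwarz, the principal-character pole giving $N^2/(1+T^3)$, \cite[Theorem 4]{JM} with the Kim--Sarnak exponent for the short-interval fourth moment, and the long-interval fourth moment for large shifts and for $T'\geq T$ (which also gives the $(\log T)^{O(1)}$ version when $T'=T$). The one thing to tighten is Step 3, where there is no $q$-power to track: \cite[Theorem 4]{JM} already states $\sum_{\chi \mod{q}}\int_T^{T+T'}|L(\frac12+it,\chi)|^4\rd t\ll T^{\eps}(qT'+q^{2(1+\eta)/3}T^{2/3})$ for $T^{\eps}\ll T'\ll T^{2/3}$ and $\log q\ll\log T$, so $\eta=7/64$ gives $2(1+\eta)/3=71/96$ directly (the base exponent is $2/3$, not the $q^{1/2+\theta c}$ shape you guess), and the range $T^{2/3}\leq T'\leq T$, which that theorem does not cover, follows by cutting $[T,T+T']$ into intervals of length $\Theta(T^{2/3})$ and summing.
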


\begin{proof}

As in Lemma~\ref{lem:second-moment}, the second term in the above line arises from the contribution of the principal character. We may evaluate this separately. Indeed, from~\eqref{eq:2.2}, its contribution to the mean value of $|\mathcal D {\bf1}_{(N,2N]}(it, \chi)|^4$ can be bounded by
\[
\sum_{\chi \mod q} \int_{T}^{T+T'}\Bigbrac{{\bf1}_{\chi=\chi_0} \frac{N}{1+t}}^4 \rd t\ll \frac{N^4 }{1+T^{3}}.
\]
This is acceptable, since~\eqref{eq:2.1} gives a weight of size $\Omega(N^2)$ to the fourth moment of $|\mathcal D {\bf1}_{(N,2N]}(\frac12+it, \chi)|$. Therefore, in what follows, we may ignore the contribution of the principal character.

An argument similar to that in the proof of Lemma \ref{lem:second-moment} together with \cite[Theorem 3]{Jutila} yields the desired estimate with the term $q^{71/96}T^{2/3}$ replaced by $qT^{2/3}$ whenever $T^{1/2+\eps }\leq T' \leq T^{2/3}$. To obtain the sharper dependence on $q$ and extend the validity range of $T'$, we use a more precise fourth-moment bound for Dirichlet $L$-functions in short intervals.

We first notice that \cite[Theorem 4]{JM} yields
\[
\sum_{\chi \mod q} \int_{T}^{T+T'} \Bigabs{ L\Big(\frac{1}{2}+it,\chi\Big) }^4 \rd t \ll T^\eps (qT' + q^{\frac{2(1+\eta)}{3}} T^{2/3}),
\]
whenever $T^\eps\ll T'\ll T^{2/3}$ and $\log q\ll \log T$, where $\eta\geq 0$ is an admissible exponent  towards the Ramanujan--Petersson conjecture. The Kim--Sarnak bound in \cite[Appendix 2]{Kim} allows one to take $\eta=7/64$, giving $2(1+\eta)/3 \leq 71/96$. Hence, when $T^\eps \ll T'\ll T^{2/3}$, the truncated argument in the proof of Lemma \ref{lem:second-moment} combined with the fourth moment (cf.~\cite[Lemma 10.9]{Har}) in place of \cite[Theorem 10.1]{montgomery}, and  the above fourth moment bound in place of \cite[Lemma 2]{Zhan} then gives the claimed bound.

 Now  assume that $T^{2/3}\leq T'\leq T$. Partition $[T,T+T']$ into intervals of length $\Theta(T^{2/3})$. Applying the estimate
 \[
\sum_{\chi \mod{q}}\int_{U}^{U+\Theta(T^{2/3})} \Bigabs{ L\Big(\frac{1}{2}+it,\chi\Big) }^4\rd t\ll qT^{2/3+\varepsilon}
\]
uniformly over the left endpoints $T\leq U\leq T+T'$ of these intervals, summing over the intervals and combining the resulting estimate with the truncated argument in the proof of Lemma \ref{lem:second-moment} yields the desired bound.

Finally, the case $T'\geq T$ is handled by the argument used in the proof of Lemma~\ref{lem:second-moment} but employing instead the fourth-moment estimate for Dirichlet $L$-functions over long intervals (cf.~\cite[Lemma 10.9]{Har}) to bound the analogue of the first integral in (\ref{hhh}). This completes the proof.
\end{proof}

\begin{theorem}[Large value estimates with characters]\label{thm:large-values}
Let $(a_{n})$ be a sequence of complex numbers with $\lvert a_{n}\rvert\leq 1$. Suppose that $W$ is a finite set of pairs $(t,\chi)$, where $t\in [T, T+U]$ and $\chi$ is a character modulo $q$, such that whenever $(t,\chi)\neq (t',\chi')$ then either $\chi\neq \chi'$ or $\lvert t-t'\rvert\geq 1$. Assume that \begin{equation*}
\Big\lvert \sum_{N<n \leq 2N} a_{n}\chi(n) n^{it}\Big\rvert\geq V\end{equation*} for all $(t,\chi)\in W$. Then,
\begin{equation}\label{rr4}
\lvert W\rvert\ll (\log 2qU)^{2} \big(N^{2}V^{-2}+qU\min(NV^{-2},N^{4}V^{-6})\big).\end{equation}
Moreover, suppose that $q\leq N^{6/5}$. If $ (qU)^{2/3}\leq N\leq (qU)^{5/6}$ one has
\begin{equation}\label{pp9}
\lvert W\rvert\ll (\log 2qU)^{2}N^{2}V^{-2}+ (qU)^{\varepsilon}qU N^{12/5}V^{-4},\end{equation} and if $N>(qU)^{5/6}$ then 
\begin{equation}\label{pp91}
\lvert W\rvert\ll (\log 2qU)^{2}N^{2}V^{-2}+(qU)^{\varepsilon}(qU)^{1/2} N^{3}V^{-4}.\end{equation}
\end{theorem}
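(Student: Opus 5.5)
The plan is the standard duality (Halász–Montgomery) argument, run in the hybrid $(t,\chi)$ setting, followed by Huxley's subdivision trick and, for the sharper bounds, a Guth–Maynard type large value estimate.

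\textbf{Step 1: the classical bound \eqref{rr4}.} For each $(t,\chi)\in W$ pick a unimodular $c_{t,\chi}$ with $c_{t,\chi}\sum_n a_n\chi(n)n^{it}=|\cdot|$, and sum over $W$ to get
\[
|W|V\leq \sum_n |a_n|\,\Bigl|\sum_{(t,\chi)\in W} c_{t,\chi}\chi(n)n^{it}\Bigr|.
\]
Next apply Cauchy--Schwarz with a smooth majorant $w(n/N)$ of ${\bf1}_{(N,2N]}$. This gives
\[
|W|^2V^2\ll N\sum_{(t,\chi),(t',\chi')\in W}\Bigl|\sum_n w(n/N)\chi\bar\chi'(n)n^{i(t-t')}\Bigr|.
\]
The diagonal terms ($\chi=\chi'$, $|t-t'|\leq 1$) contribute $\ll N|W|$. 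For the off-diagonal terms, use Mellin inversion to write the inner sum as a contour integral of $L(s,\chi\bar\chi')$ times a rapidly decaying weight, and shift to $\Re s=1/2$. The pole occurs only when $\chi\bar\chi'$ is principal, and it contributes $\ll N/(1+|t-t'|)^A$. The remaining integral is $\ll N^{1/2}(qU)^{1/2}$ up to logarithms, using the convexity bound quoted in the proof of Lemma~\ref{lem:second-moment}, or a mean-value (large sieve) bound if one wants to avoid the $\eps$ loss. Summing the pole contribution over pairs with the same character gives $|W|\log U$.

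Combining these, $|W|^2V^2\ll N|W|\log+N^{3/2}(qU)^{1/2}|W|^2$. The second term is only harmless when $V^2\gg N^{3/2}(qU)^{1/2}$, which is weaker than needed. So instead of convexity I would use the hybrid large sieve directly (Lemma~\ref{lem:mean-value} in dual form), giving $|W|\ll (N+qU)NV^{-2}(\log)^2$. This is the $N^2V^{-2}+qUNV^{-2}$ part.

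For the $N^4V^{-6}$ alternative, use Huxley's trick. Apply the above with the fourth-moment-type off-diagonal bound (Lemma~\ref{lem:fourth-moment} or a plain $\zeta$/$L$ convexity bound), and split $[T,T+U]$ into subintervals of length $U_0$ chosen so that the off-diagonal term is at most $\tfrac12|W_j|^2V^2$ on each piece. Summing over the $U/U_0$ pieces yields $|W|\ll N^2V^{-2}+qUN^4V^{-6}$ after optimizing $U_0\asymp V^4N^{-3}$ or similar. The final bound is the minimum of the two.

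\textbf{Step 2: the refined bounds \eqref{pp9} and \eqref{pp91}.} These are character-twisted versions of the Guth--Maynard large value theorem. In this paper they are presumably imported from the literature (the $q$-aspect extension of \cite{GM}, e.g.\ as in \cite{MPPTW} or \cite{CGY}) rather than reproved. My plan is to cite that hybrid version. I would check that the hypothesis $q\leq N^{6/5}$ matches the range where the $(t,\chi)$ analogue of Guth--Maynard's $M\!M^*$ matrix argument works, with $qU$ playing the role of $T$. I would also check that \eqref{pp91} is the Huxley-type refinement $N^2V^{-2}+(qU)^{1/2}N^3V^{-4}$ obtained from an off-diagonal bound of the shape $(qU)^{1/2}N$, via the same subdivision as in Step 1.

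\textbf{Main obstacle.} The serious step is \eqref{pp9}: transferring Guth--Maynard's estimate for $\operatorname{tr}(MM^*)^3$ and its Fourier-analytic bounds to the hybrid family. The characters add an arithmetic-progression structure to the additive energy count. If no citation is available, I would first embed $\chi(n)n^{it}$ via additive characters of modulus $q$ and apply their argument to the phase $t\log n+ a n/q$, treating $qU$ as the effective height.
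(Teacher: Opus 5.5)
Your architecture matches the paper's. The bound \eqref{rr4} is the classical hybrid Montgomery--Hal\'asz--Huxley bound; the paper replaces $a_n$ by $a_nn^{iT}$ to move $t$ into $[0,U]$ and cites \cite[Lemma 10.2]{Har}. The bounds \eqref{pp9} and \eqref{pp91} come from the character-twisted Guth--Maynard bound of \cite[Theorem 1.1]{CGY}. However, the step that actually makes up the paper's proof is missing from your plan.

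What \cite{CGY} provides, for $N\geq (qU)^{2/3}$, is
\[
|W|\ll (qU)^{\eps}\bigl(N^2V^{-2}+(qU)^{1/2}N^3V^{-4}+q^{4/3}UN^2V^{-4}+qUN^{12/5}V^{-4}\bigr).
\]
This has an $\eps$-loss on the diagonal term too, whereas \eqref{pp9} and \eqref{pp91} carry only $(\log 2qU)^2$ there. Your plan has no mechanism for this. The paper obtains it by a case split:
\begin{itemize}
\item if $N^2V^{-2}\leq (qU)^{1/2}N^3V^{-4}$, the diagonal term is absorbed by the second term;
\item otherwise \eqref{rr4} gives $(\log 2qU)^2(N^2V^{-2}+qUN^4V^{-6})$, and $qUN^4V^{-6}=\bigl((qU)^{1/2}N^3V^{-4}\bigr)^2/(N^2V^{-2})\leq (qU)^{1/2}N^3V^{-4}$.
\end{itemize}
This is not cosmetic. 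In Cases 1.1 and 2.1 of Lemma~\ref{lem:type-iii}, only a $(\log x)^{O(1)}$ loss on the diagonal term is affordable.

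Your reading of the hypotheses is also off. The condition $q\leq N^{6/5}$ is not a range of validity for a matrix argument. It is exactly the inequality $q^{4/3}UN^2V^{-4}\leq qUN^{12/5}V^{-4}$, which absorbs the third term. Likewise, $(qU)^{5/6}$ is exactly where $(qU)^{1/2}N^3=qUN^{12/5}$, so \eqref{pp9} and \eqref{pp91} simply record which of these two terms dominates. In particular, \eqref{pp91} is not a Huxley-type refinement. Subdividing the $t$-range multiplies a per-piece bound by $U/U_0$, so it only produces terms linear in $U$, never $(qU)^{1/2}$. No new hybrid Guth--Maynard argument (additive-character embedding and so on) is needed.

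In Step 1, the bookkeeping does not give \eqref{rr4} as written. After the outer factor $N$ from Cauchy--Schwarz, the diagonal contributes $N^2|W|$, not $N|W|$. With an off-diagonal bound taken on $\Re s=1/2$, your threshold $V^2\gg N^{3/2}(qU_0)^{1/2}$, i.e.\ $qU_0\asymp V^4N^{-3}$, yields $|W|\ll N^2V^{-2}+qUN^5V^{-6}$, not $qUN^4V^{-6}$. Huxley's term requires the Hal\'asz--Montgomery off-diagonal bound with no $N^{1/2}$ factor:
\[
\sum_{N<n\leq 2N}\chi\bar\chi'(n)n^{i\tau}\ll \frac{N\,\mathbf 1_{\chi=\chi'}}{1+|\tau|}+(q(1+|\tau|))^{1/2}\log\bigl(q(2+|\tau|)\bigr),
\]
which corresponds to a contour on $\Re s=0$. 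The threshold then becomes $V^2\gg N(qU_0)^{1/2}\log$, so $U_0\asymp V^4/(qN^2\log^2)$. Since this bound is classical, citing it, as the paper does, is the cleaner option.
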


\begin{proof}
We consider the set
$$
W_{U}=\bigset{(t-T,\chi):\ \ (t,\chi)\in  W },
$$ 
and define the sequence of complex numbers $b_{n}=a_{n}n^{iT}$. By the above assumptions, $W_{U}$ is a finite set of pairs $(t,\chi)$, where $t\in [0,U]$ and $\chi$ is a character modulo $q$, with the property that whenever $(t,\chi)\neq (t',\chi')$ then either $\chi\neq \chi'$ or $\lvert t-t'\rvert\geq 1$. Moreover, $\lvert b_{n}\rvert=\lvert a_{n}\rvert\leq 1$ and whenever $(t,\chi)\in W_{U}$ then 
$$\Big\lvert \sum_{N<n \leq 2N} b_{n}\chi(n) n^{it}\Big\rvert=\Big\lvert \sum_{N<n \leq 2N} a_{n}\chi(n) n^{i(t+T)}\Big\rvert\geq V.$$
 The estimate (\ref{rr4}) now follows from \cite[Lemma 10.2]{Har}.

In order to obtain (\ref{pp9}) and \eqref{pp91} we observe that a direct consequence of \cite[Theorem 1.1]{CGY} entails whenever $ N\geq (qU)^{2/3}$ that
\begin{equation}\label{2.8}\lvert W\rvert\ll (qU)^{\varepsilon}(N^{2}V^{-2}+(qU)^{1/2}N^{3}V^{-4}+q^{4/3}UN^{2}V^{-4}+qU N^{12/5}V^{-4}).\end{equation}
We first prove a stronger, loss-separated version of this bound, namely
\begin{align}\label{w-loss-sep}
	|W| \ll (\log 2qU)^{2} N^2V^{-2} +(qU)^\eps \big((qU)^{1/2}N^{3}V^{-4}+q^{4/3}UN^{2}V^{-4}+qU N^{12/5}V^{-4}\big).
\end{align}
We split the analysis into two cases according to whether $N^2V^{-2} \leq (qU)^{1/2}N^{3}V^{-4}$ or not. If the latter holds, the second term dominates the first one in (\ref{2.8}). Therefore, one has
\[
\lvert W\rvert\ll (qU)^{\varepsilon}((qU)^{1/2}N^{3}V^{-4}+q^{4/3}UN^{2}V^{-4}+qU N^{12/5}V^{-4}).
\]
This gives the desired bound \eqref{w-loss-sep}. If instead $N^2V^{-2} > (qU)^{1/2}N^{3}V^{-4}$, we have from \eqref{rr4} that
\[
\lvert W\rvert\ll (\log 2qU)^2 (N^2V^{-2} + qU N^{4} V^{-6}).
\]
We then note that under the assumption $N^2V^{-2} > (qU)^{1/2}N^{3}V^{-4}$ it transpires that
\[
qU N^{4} V^{-6} =\frac{\big( (qU)^{1/2}N^{3}V^{-4} \big)^2}{N^2V^{-2}} \leq (qU)^{1/2}N^{3}V^{-4}.
\]
It then follows that
\[
\lvert W\rvert\ll (\log 2qU)^2 (N^2V^{-2} + (qU)^{1/2}N^{3}V^{-4})\ll (\log 2qU)^2 N^2V^{-2} + (qU)^\eps (qU)^{1/2}N^{3}V^{-4}.
\]
This also yields the estimate \eqref{w-loss-sep}.

We now return to the proof of \eqref{pp9} and \eqref{pp91}.  If $q\leq N^{6/5}$  it is apparent that the third term in \eqref{w-loss-sep} is always smaller than the fourth one, namely
$$q^{4/3}UN^{2}V^{-4}\leq qU N^{12/5}V^{-4},$$
 and therefore
 \[
|W| \ll (\log 2qU)^{2} N^2V^{-2} +(qU)^\eps \big((qU)^{1/2}N^{3}V^{-4}+qU N^{12/5}V^{-4}\big). 
 \]
 
Assume first that $N\leq (qU)^{5/6}$. Then one further has 
$$ (qU)^{1/2}N^{3}V^{-4}\leq qU N^{12/5}V^{-4}.$$ Combining the above two estimates thereby delivers the conclusion (\ref{pp9}). If instead $q\leq N^{6/5}$ but $N>(qU)^{5/6}$ then $$ q^{4/3}UN^{2}V^{-4}\leq qU N^{12/5}V^{-4}\leq (qU)^{1/2}N^{3}V^{-4},$$ whence (\ref{pp91}) holds as well.
\end{proof}

\subsection{Single Dirichlet polynomial}

We take for convenience the parameters 
\begin{equation}\label{PPtilde}
 P=\exp((\log x)^{2/3+\varepsilon_{0}/2}),\ \ \ \widetilde P=x^{1/(\log\log x)^{2}},
 \end{equation} where $\varepsilon_{0}>0$ is sufficiently small. For any $P\leq P_{1}\leq \widetilde P$ and a Dirichlet character $\chi \mod q$, we introduce the Dirichlet polynomial
$$P(s,\chi)=\sum_{P_{1}< p\leq \min\{2P_{1},\widetilde P\}}\chi(p)p^{-s}.$$
\begin{lemma}\label{lem:bd-diri-prime}
 Let $t\in \mathbb R$ and $|t|\leq x$. Let $\chi$ be a Dirichlet character with conductor $q\leq (\log x)^A$. Then, there exists some constant $c>0$ such that
\[
\Big| P(1+it,\chi)\Big| \ll\frac{\log x}{1+|t|} + \exp\big(-c(\log x)^{\eps_{0}/3}\big).
\]
\end{lemma}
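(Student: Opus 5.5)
The plan is to use the prime number theorem in arithmetic progressions (Siegel--Walfisz type, with the Vinogradov--Korobov zero-free region), written as an explicit formula for $\sum_{p\leq u}\chi(p)p^{-it}$, and then partial summation.

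\textbf{Reduction to primitive characters.} First I would replace $\chi$ by the primitive character $\chi^*$ inducing it. Since $P_1\geq P$ is much larger than $q\leq(\log x)^A$, no prime $p>P_1$ divides $q$. Hence $\chi(p)=\chi^*(p)$ on the whole range, and it suffices to treat primitive $\chi$ of conductor $q\leq (\log x)^A$.

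\textbf{Prime counting estimate.} Next I would establish, uniformly for $|t|\leq x$ and $P\leq u\leq 2\widetilde P$,
\[
\sum_{p\leq u}\chi(p)p^{-it}={\bf1}_{\chi=\chi_0}\int_2^u \frac{v^{-it}}{\log v}\rd v+O\big(u\exp(-c'(\log u)^{3/5}(\log\log u)^{-1/5})\big).
\]
This comes from the standard truncated explicit formula for $\psi(u,\chi)$ twisted by $n^{-it}$: shift $t$ into the zero sum, which costs heights up to $|t|+T_0$. The key input is the Vinogradov--Korobov zero-free region
\[
\sigma>1-\frac{c}{(\log q(|t|+3))^{2/3}(\log\log q(|t|+3))^{1/3}},
\]
which holds for characters of modulus $q\leq(\log x)^A$ except for a possible Siegel zero. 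The Siegel zero is excluded by Siegel's theorem, since $q\leq (\log x)^A$ while $\log u\geq (\log x)^{2/3}$. This ineffective step is acceptable; alternatively one can use Page's theorem for the single exceptional modulus.

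\textbf{Main obstacle: the error term is uniform in $|t|\leq x$.} Because $\log(|t|+3)$ can be as large as $\log x$, the error term is really
\[
u\exp\Big(-c\frac{\log u}{(\log x)^{2/3}(\log\log x)^{1/3}}\Big).
\]
With $\log u\geq \log P=(\log x)^{2/3+\eps_0/2}$, this is at most $u\exp(-c(\log x)^{\eps_0/3})$. This is exactly why $P$ is chosen as in \eqref{PPtilde}. I expect this to be the delicate point: one needs a zero-free region that is uniform in both $t$ and $q$, plus zero-density or log-free control of the zeros near height $|t|$. I would cite the standard form, e.g.\ Iwaniec--Kowalski Theorem 8.29 combined with the proof of Theorem 5.13 there.

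\textbf{Partial summation.} Finally, I would write $P(1+it,\chi)=\int_{P_1}^{P_2}u^{-1}\,\rd S(u)$, where $S$ is the sum above and $P_2=\min\{2P_1,\widetilde P\}$.
\begin{itemize}
\item The error terms contribute $O(\exp(-c(\log x)^{\eps_0/3}))$, using the factor $u^{-1}$ and the fact that the interval is dyadic.
\item For $\chi=\chi_0$, the main term is $\int_{P_1}^{P_2} v^{-1-it}(\log v)^{-1}\rd v$. Integrating by parts bounds it by $\ll 1/(\log P_1)$ trivially, and by $\ll 1/(|t|\log P_1)$ for $|t|\geq1$.
\end{itemize}
Together these give the bound $\ll \log x/(1+|t|)$, which is in fact stronger than needed, and this would complete the proof.
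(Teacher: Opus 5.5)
Your argument is correct. The paper gives no proof of its own beyond citing \cite[Lemma 3.1]{PSW}, and your route is the standard proof of that estimate: a prime number theorem for $\sum_{p\le u}\chi(p)p^{-it}$ with error $u\exp\bigl(-c\log u/((\log x)^{2/3}(\log\log x)^{1/3})\bigr)$, uniformly for $|t|\le x$ (from Vinogradov--Korobov plus Siegel's theorem), followed by partial summation over the dyadic range.

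A few slips do not affect your main line. The $(\log u)^{3/5}$ error you first display is not available uniformly for $|t|\le x$, but you correct this and only use the corrected form. The known Vinogradov--Korobov region for $L(s,\chi)$ has $\log q$ added outside the $2/3$-power rather than inside it; this is harmless here since $\log q\ll\log\log x$. The Page's-theorem alternative does not work for large $A$: the effective bound $1-\beta_1\gg q^{-1/2}(\log q)^{-2}$ gives no saving at $u\approx P$ once $A$ exceeds about $4/3$, so Siegel's theorem is genuinely needed.
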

\begin{proof}
See \cite[Lemma 3.1]{PSW}.
\end{proof}

\begin{lemma}\label{lem:upper-bd-diri}
Fix $\gamma\in (0,1]$. Then, for any $A>0$, any $1\leq r\leq x$ and any character $\chi$ of modulus $q\leq (\log x)^{A}$, one has
$$\sup_{\lvert t\rvert\leq x}\sup_{I\subset [x^{ \gamma},2 x^{ \gamma}]}\Big\lvert \sum_{l\in I}\frac{\mu(rl)\chi(l)}{l^{1/2+it}}\Big\rvert\ll \frac{x^{\gamma/2}}{(\log x)^{A}}.$$Likewise, for every $0\leq T_{0}\leq x$, there exists some $\delta=\delta(\gamma)>0$ such that
$$\sup_{T_{0}\leq \lvert t\rvert\leq x}\sup_{I\subset [x^{ \gamma},2 x^{ \gamma}]}\Big\lvert \sum_{l\in I}\frac{\chi(l)}{l^{1/2+it}}\Big\rvert\ll x^{\gamma/2-\delta}+\frac{x^{\gamma/2}\log x}{(1+ T_{0})^{1/2}}.$$
\end{lemma}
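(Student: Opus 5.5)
The plan is to reduce both bounds to classical pointwise estimates for character sums over short ranges: zero-free-region results for the twisted M\"obius sum, and the Vinogradov--Korobov (or Weyl) exponential sum bound for $\sum \chi(l)l^{-it}$.

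\emph{First estimate.} Since $q\leq (\log x)^A$ is tiny, we reduce $\chi$ to the primitive character $\chi^*$ inducing it. Writing $\mu(rl)=\mu(r)\mu(l)\mathbf 1_{(l,r)=1}$, it suffices to bound
\[
\sum_{l\in I,\,(l,r)=1}\mu(l)\chi(l)l^{-it}.
\]
By partial summation, it is enough to show that for $y\asymp x^\gamma$, uniformly in $|t|\leq x$,
\[
\sum_{l\leq y,\,(l,rq)=1}\mu(l)\chi^*(l)l^{-it}\ll y(\log x)^{-2A}.
\]
The summation by parts costs only a factor of the form $(1+|t|)$ times short pieces, which is not harmless. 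So instead of summing by parts in $t$, we absorb $l^{-it}$ directly into the Perron integrand. Concretely, we apply the truncated Perron formula to $1/L(s+it,\chi^* \chi_{0,rq})$. The coprimality condition with $r$ introduces the finite Euler factor $\prod_{p\mid r}(1-\chi^*(p)p^{-s-it})^{-1}$, which has size $\ll (\log x)^{O(1)}$ on $\mathrm{Re}\, s\geq 1-c/\log x$ because $\omega(r)\ll\log x$.

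We then shift the contour into the Vinogradov--Korobov zero-free region
\[
\sigma\geq 1-c(\log (q(|t|+|u|+3)))^{-2/3}(\log\log)^{-1/3},
\]
using the standard bound $1/L\ll (\log)^{O(1)}$ there. Siegel's theorem handles possible exceptional zeros, since $q\leq(\log x)^A$; this makes the result ineffective, which is fine. Because $|t|\leq x$ and $y=x^\gamma$, we have $\log(|t|+3)\asymp_\gamma \log y$, and the region yields a saving $\exp(-c(\log x)^{1/3-\eps})$, which beats any power of $\log x$.

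The main obstacle in this part is uniformity in $t$ up to $x$ with $y=x^{\gamma}$. This is exactly why one needs Vinogradov--Korobov rather than the classical de la Vall\'ee Poussin region: with the classical region, the saving $\exp(-c\log y/\log |t|)$ is only a constant for $|t|\approx x$.

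\emph{Second estimate.} Split the character sum into residue classes $l\equiv b \pmod q$. Each class gives a sum $\sum_{m} (qm+b)^{-1/2-it}$. By partial summation, the weight $l^{-1/2}$ contributes a factor $x^{-\gamma/2}$ (up to constants) times a sup over subintervals.

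We then treat two regimes of $t$.
\begin{itemize}
\item For $T_0\leq |t|\leq x^{\gamma(1-\delta')}$, or more generally for $|t|$ comparable to $y^{O(1)}$ (which holds since $|t|\leq x=y^{1/\gamma}$), the phase $f(m)=-\frac{t}{2\pi}\log(qm+b)$ satisfies $|f^{(k)}|\asymp |t|/y^k$. The Vinogradov--Korobov/Weyl $k$-th derivative test with $k$ chosen in terms of $\gamma$ gives a power saving $y^{1-\delta}$, and hence the term $x^{\gamma/2-\delta}$, whenever $|t|\geq y^{\eta}$ for some small $\eta$.
\item For $|t|\leq y^{\eta}$ with $|t|\geq T_0$, we use van der Corput's first-derivative (Kusmin--Landau) test, since $|f'|\asymp |t|/y$ is small. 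Equivalently, we compare with the integral via Euler--Maclaurin. This gives a main term $\ll y/(1+|t|)$ times a constant, plus a remainder $O(y^{\eta})$.
\end{itemize}
A cleaner route for this second range is Lemma~\ref{lem:second-moment}'s input, namely the convexity bound for $L(\tfrac12+it,\chi)$, applied through Perron's formula. The principal character contributes a main term of order $x^{\gamma/2}/(1+|t|)\cdot\log x$, which is bounded by the stated $x^{\gamma/2}\log x/(1+T_0)^{1/2}$. The nonprincipal characters contribute $x^{\gamma/2-\delta}$ via subconvexity. Either route gives the claim; I would present the derivative-test version and note the routine partial-summation bookkeeping.
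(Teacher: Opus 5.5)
Your main argument is correct. The paper gives no proof of its own here: it only cites \cite[Lemma 3.9]{MSTT}, and what you sketch is the standard proof behind that citation.
\begin{itemize}
\item For the first bound, you apply Perron's formula to $1/L(s+it,\chi^*)$ times the finite Euler factor over $p\mid rq$, and shift into the Vinogradov--Korobov region. Siegel's theorem controls a possible exceptional zero. This is what makes the saving uniform for $|t|\le x$ when the length is only $x^{\gamma}$.
\item For the second bound, you split into residue classes modulo $q$. The first-derivative test handles small $|t|$, where the main terms cancel for non-principal $\chi$. Van der Corput $k$-th derivative tests, with $k$ depending on $\gamma$, handle $|t|\ge y^{\eta}$.
\end{itemize}
This route even gives $x^{\gamma/2}/(1+T_0)$ in place of the stated $x^{\gamma/2}\log x/(1+T_0)^{1/2}$.

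Two corrections are needed.

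First, your closing claim that the ``cleaner route'' via Perron and convexity/subconvexity also gives the second bound is false. That route bounds the sum by roughly $\max_{|v|\le 2x}|L(1/2+iv,\chi)|$. This is $x^{1/4+\varepsilon}$ under convexity and $x^{1/6+\varepsilon}$ under Weyl's bound, which beats $x^{\gamma/2}$ only for $\gamma>1/2$ (resp.\ $\gamma>1/3$). The lemma, however, is claimed for all $\gamma\in(0,1]$, and is applied in Section~\ref{sec8} to polynomials of length roughly between $x^{4/25}$ and $x^{1/5}$. It is precisely the derivative tests that handle small $\gamma$, because they exploit that $|t|\le x=y^{1/\gamma}$ is only polynomially large in the length. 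Drop the alternative.

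Second, your bound on $\prod_{p\mid rq}(1-\chi^*(p)p^{-s-it})^{-1}$ does not follow from $\omega(r)\ll\log x$ alone. Bounding each factor by a constant only gives $C^{\omega(r)}$, which can be as large as $x^{c/\log\log x}$. Moreover, you need the bound on the Vinogradov--Korobov contour, not just on $\mathrm{Re}\, s\geq 1-c/\log x$. The correct reasoning is as follows.
\begin{itemize}
\item Since $p^{-\sigma}$ is decreasing in $p$, the sum $\sum_{p\mid rq}p^{-\sigma}$ is at most the corresponding sum over the first $\omega(rq)$ primes.
\item All of these primes are $\le(\log x)^2$.
\item For such $p$ one has $p^{1-\sigma}=1+o(1)$ when $1-\sigma\ll(\log x)^{-2/3}$.
\end{itemize}
Hence the product is $\ll\log\log x$.
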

\begin{proof}
See \cite[Lemma 3.9]{MSTT}.
\end{proof}


\section{Major-arc analysis}\label{sec3}

We shall start our discussion by making a routinary major and minor arc dissection. To such an end, we define the set of \emph{major arcs} $\major$ to be the union of the intervals
$$\major(q,a)=\Big\{\alpha\in [0,1):\ \lvert \alpha-a/q\rvert\leq \frac{(\log x)^{5A}}{H}\Big\},$$ where $q\in\mathbb{N}$ and $a\in\mathbb{Z}$ with $(a,q)=1$ and such that $1\leq q\leq (\log x)^{3A}$ and $0\leq a\leq q$. We also introduce the \emph{minor arcs}  $\minor =[0,1)\setminus \major$.

\begin{proposition}[Major-arc analysis]\label{prop:major-arc}
Suppose that $H\geq x^{17/30+\varepsilon}$.  Then one has
$$
\sup_{\alpha\in \major}\Bigabs{\sum_{x\leq n\leq x+H}\mu(n)e(\alpha n)} \ll He^{-(\log x)^{1/8}}.$$
\end{proposition}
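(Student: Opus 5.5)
Write $\alpha=a/q+\beta$ with $q\leq(\log x)^{3A}$ and $|\beta|\leq(\log x)^{5A}/H$. The plan is to turn the exponential sum into character-twisted sums of $\mu$ over short intervals and use a zero-density input for short intervals, which is known well below $x^{17/30}$.

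First I remove the additive twist. Splitting $n$ by $d=(n,q)$ and writing $n=dm$, we have $\mu(dm)=\mu(d)\mu(m)\mathbf 1_{(m,d)=1}$. Expanding $e(am/(q/d))$ for $(m,q/d)=1$ in Dirichlet characters modulo $q/d$ costs a Gauss-sum factor of size at most $q^{1/2}$. Since $q$ is only a power of $\log x$, it suffices to prove
\[
\sup_{\chi \mod{r},\ r\leq (\log x)^{3A}}\ \Bigabs{\sum_{y< m\leq y+H'}\mu(m)\chi(m)e(m\beta)}\ll H'e^{-2(\log x)^{1/8}},
\]
uniformly for $H'\asymp H/d$ and $y\asymp x/d$. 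Next I remove $e(m\beta)$ by partial summation. Its derivative has total variation over the interval $\ll |\beta|H'\leq(\log x)^{5A}$. So it is enough to bound $\sum_{y<m\leq y+h}\mu(m)\chi(m)$ for every $h\leq H'$ with $h\geq H'(\log x)^{-10A}$, shorter $h$ being trivial, at the cost of a factor $(\log x)^{5A}$. This cost is absorbed by the $e^{-(\log x)^{1/8}}$ savings.

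The problem is now the Möbius function twisted by a character of small modulus in the short interval $(y,y+h]$, with $h\geq x^{17/30+\varepsilon/2}$. I would apply Perron's formula with $T=x^{1+\varepsilon}/h$. For $\chi$ non-principal there is no main term, and for $\chi_0$ the main term is zero since $1/L$ has no pole. This gives
\[
\sum_{y<m\leq y+h}\mu(m)\chi(m)=\frac1{2\pi i}\int_{c-iT}^{c+iT}\frac{1}{L(s,\chi)}\frac{(y+h)^s-y^s}{s}\,ds+O(x^{1-\varepsilon/2}\cdot\text{small}).
\]
Then I move the contour to $\Re s=1-\eta(T)$ with $\eta\asymp(\log x)^{-2/3}(\log\log x)^{-1/3}$, using the Vinogradov--Korobov zero-free region. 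There is no Siegel zero issue since $r\leq(\log x)^{3A}$, by Siegel's theorem with an ineffective constant. I use $1/L\ll(\log T)^{O(1)}$ in that region and bound $((y+h)^s-y^s)/s\ll h x^{\sigma-1}$. This trivially yields only a saving of $x^{-\eta}=\exp(-c(\log x)^{1/3-o(1)})$ if one can integrate over $|t|\leq T$ with the factor $h$. But the factor $h$ is only valid for $|t|\leq x/h$; beyond that the bound is $x^{\sigma}/|t|$. So I truncate at $T_0=x/h$ and treat $T_0<|t|\leq T$ separately.

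The main obstacle is that tail range. There I would instead use the standard Huxley-type route to primes and Möbius in short intervals. I decompose $\mu$ via Heath-Brown's identity into products of Dirichlet polynomials and apply the mean and large value estimates of Section~\ref{sec2} (Lemma~\ref{lem:mean-value}, Lemma~\ref{lem:fourth-moment}, Theorem~\ref{thm:large-values}). Equivalently, I would invoke the zero-density estimate $N(\sigma,T,\chi)\ll (rT)^{2(1-\sigma)/(1-\kappa)+\varepsilon}$ coming from Guth--Maynard / \cite{CGY} large values, with density exponent $30/13$. By the Huxley argument this controls the tail provided $h\geq x^{1-1/A_{\rm dens}+\varepsilon}$, i.e. $h\geq x^{1-13/30+\varepsilon}=x^{17/30+\varepsilon}$. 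This matches the hypothesis, and explains where the exponent comes from. Combining the zero-free region near $\sigma=1$ with the density bound gives the saving $\exp(-c(\log x)^{1/3-o(1)})$, which beats $e^{-(\log x)^{1/8}}$ with room to absorb all the logarithmic losses.
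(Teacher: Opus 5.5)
Your outer reduction matches the paper's. You split by $(n,q)$, expand in characters of modulus dividing $q$ with Gauss sums of size at most $q^{1/2}$, and remove $e(m\beta)$ by partial summation using $|\beta|H\le(\log x)^{5A}$. Your key input, a density exponent $30/13$ giving $1-13/30=17/30$, is also the paper's. The gap is in how you propose to prove the short-interval bound for $\sum_{y<m\le y+h}\mu(m)\chi(m)$. You treat $|t|\le x/h$ as easy: move the Perron contour to $\Re s=1-\eta$ and bound trivially, leaving only $x/h<|t|\le x^{1+\varepsilon}/h$ for the density argument. This step fails. The kernel bound $|((y+h)^s-y^s)/s|\le h\,y^{\sigma-1}$ holds for every $t$, since the quotient equals $\int_y^{y+h}u^{s-1}\,du$. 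But it must be integrated over a $t$-range of length $\asymp x/h$, so that range contributes $x^{1-\eta}(\log x)^{O(1)}$. For $h$ near $x^{17/30}$ this exceeds the trivial bound $h$ by a factor of about $x^{13/30}$. The zero-free region on its own only handles intervals of length $x^{1-o(1)}$. So $|t|\le x/h$ is the heart of the problem, and the range beyond it is no harder. The zero-density input must be applied over the whole range $|t|\le x^{1+\varepsilon}/h$. The zero-free region enters only to keep $\sigma$ at distance $\gg(\log x)^{-2/3-o(1)}$ from $1$ inside the density argument.

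Moreover, for $\mu$ you cannot run Huxley's explicit-formula argument. The function $1/L(s,\chi)$ has poles at the zeros with residues involving $1/L'(\rho,\chi)$, and these are not under control. Deducing $\sum_{y<m\le y+u}\mu(m)\chi(m)\ll u\,e^{-(\log x)^{1/6}}$ for $u\gg x^{17/30+\varepsilon/3}$ from a density estimate is exactly what Ramachandra's theorem provides. The paper applies it to $F(s)=L_{q_0}(s,\chi)=\mu(q_0)L(s,\chi)^{-1}\prod_{p\mid q_0}(1-\chi(p)p^{-s})^{-1}$ with $B=30/13$. Otherwise you would have to actually carry out the Heath-Brown identity and large-value argument you mention; your ``equivalently'' hides that work.

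There is also a smaller slip. Truncating the partial summation at $h\ge H'(\log x)^{-10A}$ leaves a contribution $\ll|\beta|\bigl(H'(\log x)^{-10A}\bigr)^2\ll H'(\log x)^{-15A}$ from the short initial segments. That does not beat $e^{-(\log x)^{1/8}}$. Cut at $h=x^{17/30+\varepsilon/2}$ instead, as the paper does; the trivial bound then controls the discarded part with a power saving because $H\ge x^{17/30+\varepsilon}$.
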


\begin{proof}
Write $\alpha =\frac aq+\beta$, where $ a\in\mathbb{Z}$ and $q\in\mathbb{N}$ with $(a,q)=1$ such that $0\leq a\leq q\leq (\log x)^{3A}$ and $\lvert \beta\rvert\leq \frac{(\log x)^{5A}}{H}$. Splitting the  sums according to  $q_0=\gcd(q,n)$,  and employing orthogonality of characters it follows that 
\begin{align*}
\sum_{x<n\leq x+H}&\mu(n)e(\alpha n)=\sum_{x<n\leq x+H}\mu(n)e(\beta n)e(an/q)
\\
&=\sum_{q_{0}q_{1}=q}\sum_{\substack{ r\mod{q_{1}}\\ (r,q_{1})=1}}\sum_{\substack{\frac{x}{q_{0}}<n\leq \frac{x}{q_{0}}+\frac{H}{q_{0}}\\ n\equiv r\mod{q_{1}}}}\mu(q_{0}n)e(\beta q_{0}n)e(an/q_{1})
\\
&=\sum_{q_0q_{1}=q} \sum_{\substack{r \mod{q_{1}}\\ (r,q_{1})=1}} e\Big(\frac{ar}{q_{1}}\Big) \phi(q_{1})^{-1} \sum_{\chi\mod{q_{1}}} \overline{\chi(r)}  \sum_{\frac{x}{q_{0}}<n\leq \frac{x}{q_{0}}+\frac{H}{q_{0}}} \mu(q_{0}n)e(\beta q_{0}n) \chi(n).
\end{align*}
Therefore, writing\begin{equation}\label{dd3}
C(a,\chi)=\sum_{\substack{r \mod{q_{1}}\\ (r,q_{1})=1}} e\Big(\frac{ar}{q_{1}}\Big)  \overline{\chi(r)},\end{equation} 
it follows that
$$\sum_{x<n\leq x+H}\mu(n)e(\alpha n)=\sum_{q_0q_{1}=q}\phi(q_{1})^{-1}\sum_{\chi\mod{q_{1}}}C(a,\chi)\sum_{\substack{\frac{x}{q_{0}}<n\leq \frac{x}{q_{0}}+\frac{H}{q_{0}}}} \mu(q_{0}n)e(\beta q_{0}n) \chi(n).$$

We shall next focus on the inner sum. To such an end it is worth considering the $L$-function
$$ L_{q_{0}}(s,\chi)=\sum_{n=1}^{\infty}\frac{\mu(q_{0}n)\chi(n)}{n^{s}}.$$ By employing multiplicativity and the fact that any $n\in\mathbb{N}$ with $(n,q_{0})>1$ satisfies $\mu(q_{0}n)=0$ it transpires that
\begin{align*}L_{q_{0}}(s,\chi)=\mu(q_0)\sum_{\substack{n=1\\ (n,q_{0})=1}}^{\infty}\frac{\mu(n)\chi(n)}{n^{s}}&=\mu(q_{0})\prod_{p}\Big(1-\frac{\chi(p)}{p^{s}}\Big).\prod_{p|q_{0}}\Big(1-\frac{\chi(p)}{p^{s}}\Big)^{-1}
\\
&=\frac{\mu(q_{0})}{L(s,\chi)}\prod_{p|q_{0}}\Big(1-\frac{\chi(p)}{p^{s}}\Big)^{-1}.
\end{align*}
Consequently, an application of the Main Theorem in \cite{Ramachandra} with $F(s)=L_{q_{0}}(s,\chi)$ and $B=\frac{30}{13}$ in view of \cite[Theorem 1.2]{GM} permits one to deduce that whenever $u\gg x^{17/30+\varepsilon/3}$ then 
\begin{equation}\label{ggg}\sum_{\substack{\frac{x}{q_{0}}<n\leq \frac{x}{q_{0}}+u}} \mu(q_{0}n) \chi(n)= \mu(q_{0})\sum_{\substack{\frac{x}{q_{0}}<n\leq \frac{x}{q_{0}}+u\\ (n,q_{0})=1}} \mu(n) \chi(n)\ll ue^{-(\log x)^{1/6}}.\end{equation}  Consequently, summation by parts in conjunction with the above estimate and the bound $\lvert \beta \rvert H\leq (\log x)^{5A}$ yields
\begin{align*}
&\sum_{\substack{\frac{x}{q_{0}}<n\leq \frac{x}{q_{0}}+\frac{H}{q_{0}}}}\mu(q_{0}n)e(\beta q_{0}n) \chi(n)\\
&\ll \Big\lvert\sum_{\substack{\frac{x}{q_{0}}<n\leq \frac{x}{q_{0}}+\frac{H}{q_{0}}\\ (n,q_{0})=1}} \mu(q_0n) \chi(n)\Big\rvert
 +q_{0}\lvert \beta \rvert \int_{0}^{\frac{H}{q_{0}}}\Big\lvert\sum_{\substack{\frac{x}{q_{0}}<n\leq \frac{x}{q_{0}}+u\\ (n,q_{0})=1}} \mu(q_0n) \chi(n)\Big\rvert \rd u\\
&\ll He^{-(\log x)^{1/6}}+x^{17/30+\varepsilon/3}\lvert \beta\rvert H
+q_{0}\lvert \beta \rvert \int_{x^{17/30+\varepsilon/3}}^{\frac{H}{q_{0}}}\Big\lvert\sum_{\substack{\frac{x}{q_{0}}<n\leq \frac{x}{q_{0}}+u\\ (n,q_{0})=1}} \mu(q_0n) \chi(n)\Big\rvert \rd u.
\end{align*}
Therefore, inserting (\ref{ggg}) and the bound $\lvert \beta\rvert H\leq (\log x)^{5A}$ in the above lines delivers
\begin{align*}\sum_{\substack{\frac{x}{q_{0}}<n\leq \frac{x}{q_{0}}+\frac{H}{q_{0}}}}\mu(q_{0}n)e(\beta q_{0}n) \chi(n)\ll He^{-(\log x)^{1/7}}.
\end{align*}
Combining the preceding equations thereby gives
\begin{align*}\sum_{x<n\leq x+H}\mu(n)e(\alpha n)&\ll He^{-(\log x)^{1/7}}\sum_{q_0q_{1}=q}\phi(q_{1})^{-1}\sum_{\chi\mod{q_{1}}}\lvert C(a,\chi)\rvert
\\
&\ll He^{-(\log x)^{1/7}}q^{1/2+\varepsilon}\ll He^{-(\log x)^{1/8}},
\end{align*}
as desired.

\end{proof}


\section{Extracting a small prime factor and reduction}\label{sec4}

Due to the multiplicative property of the M\"obius function, we may apply Ramar\'e's identity to extract a small prime factor from the M\"obius function. To such an end we recall the parameters $P$ and $\tilde P$ in \eqref{PPtilde} and  employ   \cite[Lemma 3.1]{MT} to deduce that
$$
\sum_{x< n\leq x+H}\mu(n)e(\alpha n)=-\sum_{\substack{x<prn\leq x+H\\ P<p\leq \tilde P\\ r\leq x^{\varepsilon/2}}}a_{r}\mu(n)e(\alpha prn)+O\big(H(\log x)^{-\frac13+\eps }\big),
$$
where $a_{r}$ is an explicit sequence satisfying $\lvert a_{r}\rvert\leq d(r)$. 

The application of Heath-Brown's identity to $\mu$ with $k=20$ as in \cite{MT} and a dyadic dissection of the corresponding intervals permits one to express the above sum as a linear combination of $O((\log x)^{2k+1})$ sums of the form
$$\sum_{\substack{x<prn_{1}\cdots n_{2k-1}\leq x+H\\ p\in (P_{1},2P_{1}]\ r\in (R,2R], n_{i}\in (N_{i},2N_{i}]}}a_{r}a_{1}(n_{1})\cdots a_{2k-1}(n_{2k-1})e(\alpha prn_{1}\cdots n_{2k-1}),$$
where
$$P_{1}\in [P,\widetilde P],\ \ \ R\in[1/2,x^{\varepsilon/2}],\ \ \ N_{1},\ldots, N_{k-1}\in [1/2,x],$$
\begin{equation}\label{NNN}N_{k},\ldots,N_{2k-1}\in [1/2,(2x)^{1/k}],\ \ \ P_{1}RN_{1}\ldots N_{2k-1}\asymp x,\end{equation}
and for each $i$ one has $a_{i}(n)=1$ if $1\leq i\leq k-1$ and $a_{i}(n)=\mu(n)1_{(N_{i},2N_{i}]}(n)$ if $k\leq i\leq 2k-1$. We also present for further use the function
$$\mu_{\boldsymbol N}(n)=\sum_{\substack{n=prn_{1}\cdots n_{2k-1}\\ p\in (P_{1},2P_{1}]\ r\in (R,2R], n_{i}\in (N_{i},2N_{i}]}}a_{r}a_{1}(n_{1})\cdots a_{2k-1}(n_{2k-1}),$$ where $\boldsymbol N=(P_{1},R,N_{1},\ldots,N_{2k-1})$, and observe that with this notation
\begin{equation}\label{mobius-n}
\sum_{\substack{x<prn_{1}\cdots n_{2k-1}\leq x+H\\ p\in (P_{1},2P_{1}]\ r\in (R,2R]\\ n_{i}\in (N_{i},2N_{i}]}}a_{r}a_{1}(n_{1})\cdots a_{2k-1}(n_{2k-1})e(\alpha prn_{1}\cdots n_{2k-1})= \sum_{x< n\leq x+H}\mu_{\boldsymbol N}(n)e(\alpha n).\end{equation}
We summarise the above argument in the statement below.
\begin{lemma}\label{lem:reduce-to-mn}
	Let $\alpha\in \T$ and $k=20$. Then, one has
	\[
	\Big| \sum_{x<n \leq x+H} \mu(n) e(\alpha n)\Big| \ll (\log x)^{2k+1}\sup_{\boldsymbol N} \Big| \sum_{x<n \leq x+H} \mu_{\boldsymbol N} (n) e(n\alpha)\Big| + H(\log x)^{-1/3+\eps },
	\]
	where $\mu_{\boldsymbol N}$ is defined in \eqref{mobius-n} and $\boldsymbol N$ runs over the tuples satisfying (\ref{NNN}).
\end{lemma}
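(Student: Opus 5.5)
The plan is to assemble the two reductions described just before the statement. First, I apply Ramaré's identity in the form of \cite[Lemma 3.1]{MT}, with the parameters $P,\widetilde P$ from \eqref{PPtilde}. This replaces $\sum_{x<n\leq x+H}\mu(n)e(\alpha n)$ by
\[
-\sum_{\substack{x<prn\leq x+H\\ P<p\leq \widetilde P,\ r\leq x^{\eps/2}}} a_r\mu(n)e(\alpha prn),
\]
with $|a_r|\leq d(r)$, at the cost of an error $O(H(\log x)^{-1/3+\eps})$. This error is exactly the second term in the statement. I do not re-derive it; I only note that the lemma in \cite{MT} is uniform in $\alpha$, since the exponential phase is carried along as an arbitrary bounded weight in that argument.

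Second, I decompose the remaining sum dyadically.

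1. I split $p$ into dyadic blocks $(P_1,2P_1]$ with $P\leq P_1\leq \widetilde P$, capping at $\widetilde P$ (which is why $P(s,\chi)$ uses $\min\{2P_1,\widetilde P\}$). This gives $O(\log x)$ blocks.
2. I split $r\leq x^{\eps/2}$ into blocks $(R,2R]$, giving another $O(\log x)$ blocks.
3. For fixed $p,r$, the variable $n$ lies in $(x/(pr),(x+H)/(pr)]$, so $n\leq 2x$. I apply Heath-Brown's identity with $k=20$ to $\mu(n)$ for $n\leq 2x$:
\[
\mu(n)=\sum_{j=1}^{k}(-1)^{j-1}\binom{k}{j}\sum_{\substack{n=n_1\cdots n_{j-1}m_1\cdots m_j\\ m_i\leq (2x)^{1/k}}}\mu(m_1)\cdots\mu(m_j).
\]
Padding with variables equal to $1$ (dyadic range $N_i=1/2$), I write each term with $k-1$ smooth variables $n_1,\dots,n_{k-1}$ and $k$ Möbius-weighted variables $n_k,\dots,n_{2k-1}$ confined to $[1,(2x)^{1/k}]$. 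There are $O(1)$ binomial coefficients, all bounded in terms of $k$.
4. I split each $n_i$ into dyadic blocks $(N_i,2N_i]$. The smooth variables give $O(\log x)^{k-1}$ choices and the Möbius variables $O(\log x)^{k}$.

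Altogether this yields $O((\log x)^{2k+1})$ tuples $\boldsymbol N$. For each tuple, the summation condition $x<prn_1\cdots n_{2k-1}\leq x+H$ is nonempty only if $P_1RN_1\cdots N_{2k-1}\asymp x$, so empty tuples can be discarded and \eqref{NNN} holds.

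By construction each piece equals $\sum_{x<n\leq x+H}\mu_{\boldsymbol N}(n)e(n\alpha)$, as in \eqref{mobius-n}. The triangle inequality then bounds the whole sum by the number of pieces times the supremum over $\boldsymbol N$, which is the claim.

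The only point needing care is that truncating $n_i$ to dyadic blocks is legitimate. The constraints $m_i\leq(2x)^{1/k}$ become $N_i\leq(2x)^{1/k}$ up to the boundary block, and cutting a boundary block $(N_i,2N_i]$ at $(2x)^{1/k}$ is absorbed into the indicator $1_{(N_i,2N_i]}$ in $a_i$. If needed, one can restrict to $N_i\leq \tfrac12(2x)^{1/k}$ at the harmless cost of replacing $(2x)^{1/k}$ by a constant multiple. Otherwise the lemma is pure bookkeeping; there is no genuine obstacle.
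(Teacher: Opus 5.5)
Your proposal is correct and follows the paper's own argument. The paper likewise cites \cite[Lemma 3.1]{MT} for Ramar\'e's identity, which produces the $O(H(\log x)^{-1/3+\eps})$ error, and then applies Heath-Brown's identity with $k=20$ and a dyadic dissection of the $2k+1$ variables $p,r,n_1,\dots,n_{2k-1}$, giving $O((\log x)^{2k+1})$ pieces that are combined by the triangle inequality.

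The only loose point is your claim that truncating the boundary block at $(2x)^{1/k}$ is ``absorbed into the indicator''. That is not quite right, because the indicator $1_{(N_i,2N_i]}$ does not encode an extra cutoff at $(2x)^{1/k}$. It is cleaner to anchor the dyadic blocks at $(2x)^{1/k}$ from above, so that each block $(N_i,2N_i]$ lies inside $[1,(2x)^{1/k}]$, and to merge any bottom block containing only $n=1$ into $N_i=1/2$. This is exactly what your fallback remark accomplishes.
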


As was anticipated in Section~\ref{introduction}, we shall use different Dirichlet approximations to $\alpha$ for different choices of $N_1,\dots,N_{2k-1}$. To this end, let $Q$ be a parameter satisfying $(\log x)^A\leq Q\leq x$ which will depend on $N_1,\dots,N_{2k-1}$. The Dirichlet's approximation theorem implies that for any $\alpha \in \T$ there exist integers $0\leq a\leq q\leq Q$ with $(a,q)=1$ such that 
\begin{align}\label{eq:dirich-app}
	\bigabs{\alpha-a/q} \leq \frac{1}{qQ}.
\end{align}
Setting $\beta=\alpha-a/q$, the procedure in the proof of Proposition \ref{prop:major-arc} enables one to derive
\begin{align}\label{eq:sect4}
\sum_{x<n \leq x+H} \mu_{\boldsymbol N} (n) e(n\alpha) =\sum_{q_0q_1=q} \sum_{\chi \mod{q_1}} \frac{C(a,\chi)}{\phi(q_1)} \sum_{\frac{x}{q_0}<n \leq \frac{x+H}{q_0}} \chi(n) \mu_{\boldsymbol N}(q_0n) e(\beta q_0n).
\end{align} 

We next  restrict the summation over $q_0$ to $q_0\leq\exp((\log x)^{1/2})$, the  threshold being smaller than the parameter $P$ in \eqref{PPtilde}. This will allow us to discard the possibility that primes $p$ appearing in the expression for $\mu_{\boldsymbol N}$ divide $q_{0}$ (see Lemma~\ref{lem: produ-diri-poly}). For a broader applicability, we establish this property for general divisor-bounded functions.

\begin{lemma}\label{lem:restrict-small-q0}

Let $f:\Z\to\R$ be a divisor-bounded  function.  Suppose that $\alpha =a/q+\beta$ and $q\leq H^{1-\eps}$. Then one has
\begin{multline*}
\sum_{x<n \leq x+H} f (n) e(n\alpha) = O\bigbrac{ H \exp(-(\log x)^{1/3})}\\
+\sum_{q_0q_1= q \atop q_0\leq \exp((\log x)^{1/2})}\frac{1}{\phi(q_1)} \sum_{\chi \mod{q_1}} C(a,\chi) \sum_{\frac{x}{q_0}<n \leq \frac{x+H}{q_0}} \chi(n) f(q_0n) e(\beta q_0n).	
\end{multline*}

\end{lemma}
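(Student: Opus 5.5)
The plan is to start from the exact identity obtained in the proof of Proposition~\ref{prop:major-arc}, which holds verbatim with $\mu$ replaced by any $f$:
\[
\sum_{x<n \leq x+H} f (n) e(n\alpha)=\sum_{q_0q_1=q}\frac{1}{\phi(q_1)}\sum_{\chi\mod{q_1}}C(a,\chi)\sum_{\frac{x}{q_0}<n \leq \frac{x+H}{q_0}} \chi(n) f(q_0n) e(\beta q_0n).
\]
It therefore suffices to show that the terms with $q_0>\exp((\log x)^{1/2})$ contribute $O(H\exp(-(\log x)^{1/3}))$. Rather than bounding the character expansion for such a term, I would undo it: for a fixed $q_0$, the total contribution of $q_1=q/q_0$ is, by orthogonality again, exactly
\[
\sum_{\substack{x<n\leq x+H\\ \gcd(n,q)=q_0}} f(n)e(n\alpha).
\]
This reversal avoids any loss from $\sum_\chi |C(a,\chi)|/\phi(q_1)$.

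Next I would bound each such sum trivially. Write $f(n)\ll d(n)^{K}$. Then
\[
\sum_{\substack{x<n\leq x+H\\ q_0\mid n}} d(n)^K\leq d(q_0)^K\sum_{x/q_0<m\leq (x+H)/q_0} d(m)^K .
\]
Since $q\leq H^{1-\eps}$, the interval has length $H/q_0\geq H^{\eps}\geq x^{\eps'}$. By Shiu's theorem (the Brun--Titchmarsh bound for divisor-bounded multiplicative functions in short intervals), the inner sum is $\ll (H/q_0)(\log x)^{O_K(1)}$. Using $d(q_0)^K\ll \exp(O((\log x)/\log\log x))$ would be too lossy, so I would keep this factor inside the sum over $q_0$. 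The total is then bounded by
\[
H(\log x)^{O(1)}\sum_{\substack{q_0\mid q\\ q_0>\exp((\log x)^{1/2})}}\frac{d(q_0)^K}{q_0}.
\]

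The main obstacle is making this divisor sum small. By Rankin's trick with exponent $1/2$,
\[
\sum_{\substack{q_0\mid q\\ q_0>X}} \frac{d(q_0)^K}{q_0}\leq X^{-1/2}\sum_{q_0\mid q} \frac{d(q_0)^K}{q_0^{1/2}}\leq X^{-1/2}\prod_{p\mid q}\Big(1+\sum_{j\geq1}\frac{(j+1)^K}{p^{j/2}}\Big).
\]
The product is at most $\exp\big(O_K(\sum_{p\mid q}p^{-1/2})\big)=\exp\big(O((\log x)^{1/2}/\log\log x)\big)$, since $q$ has at most $O(\log x/\log\log x)$ prime factors and the sum is maximised by the smallest primes. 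With $X=\exp((\log x)^{1/2})$ this gives
\[
\exp\Big(-\tfrac12(\log x)^{1/2}+o\big((\log x)^{1/2}\big)\Big)\ll \exp(-(\log x)^{1/3})(\log x)^{-C}.
\]
This absorbs the $(\log x)^{O(1)}$ factor and proves the lemma.

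If one wanted to avoid Shiu's theorem, Cauchy--Schwarz together with a divisor-moment bound over the arithmetic progression $n\equiv0 \mod{q_0}$ would give the same conclusion, again using that $H/q_0\geq H^{\eps}$.
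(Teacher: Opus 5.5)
Your proposal is correct and follows the paper's proof essentially step for step: you undo the character expansion for the terms with $q_0>\exp((\log x)^{1/2})$, bound the resulting sums trivially via Shiu's theorem on intervals of length $H/q_0\geq H^{\eps}$, and then control the sum over large divisors $q_0\mid q$ by Rankin's trick and an Euler product. The differences are cosmetic: you keep $d(q_0)^K$ explicitly and use Rankin exponent $1/2$, whereas the paper absorbs it as $q_0^{\eps}$ and uses exponent $1/3$; as in the paper, the appeal to Shiu's theorem implicitly uses $H\geq x^{c}$ for some fixed $c>0$.
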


\begin{proof}
Observing that the formula \eqref{eq:sect4} also holds for $f$, we shall therefore focus on proving the equation by showing that the contribution from $q_0>\exp((\log x)^{1/2})$ is negligible. To such an end we shall use orthogonality of Dirichlet characters. Recalling that the function $f$ is divisor-bounded, we can estimate the contribution from $q_0>\exp((\log x)^{1/2})$ as follows:
	\begin{align*}
	&\sum_{ \substack{ q=q_0q_1\\ q_0> \exp((\log x)^{1/2})}}\phi(q_{1})^{-1} \sum_{\chi\mod{q_{1}}}C(a,\chi)\sum_{\substack{\frac{x}{q_{0}}<n\leq \frac{x}{q_{0}}+\frac{H}{q_{0}}}} f(q_{0}n)e(\beta q_{0}n) \chi(n)	\\
	=& \sum_{ \substack{ q=q_0q_1\\ q_0> \exp((\log x)^{1/2})}} \sum_{ \substack{r \mod{q_1} \\ (r,q_1)=1}} e\Big(\frac{ar}{q_1}\Big) \sum_{\substack{\frac{x}{q_{0}}<n\leq \frac{x}{q_{0}}+\frac{H}{q_{0}}\\ n\equiv r \mod {q_1}}} f(q_{0}n)e(\beta q_{0}n)\\
	\ll & H(\log x)^{O(1)}\sum_{ \substack{ q=q_0q_1\\ q_0> \exp((\log x)^{1/2})}} q_0^{-1+\eps},
	\end{align*}
where in the last step we applied Shiu's bound \cite{Shiu} to the inner sum. We now employ Rankin's trick, namely, ${\bf 1}_{q_0> \exp((\log x)^{1/2})}\leq \bigl(\frac{q_0}{\exp((\log x)^{1/2})} \bigr)^{1/3}$,    to see that
\[
\sum_{ \substack{ q=q_0q_1\\ q_0> \exp((\log x)^{1/2})}}  q_0^{-1+\eps } \ll  \exp(-(\log x)^{1/2}/3 )\sum_{d\mid q} d^{-1/2} \ll \exp(-(\log x)^{1/3}),
\]
where we have used Euler products to get the estimate
\[
\sum_{d\mid q} d^{-1/2}  \leq \prod_{p\mid q}(1-p^{-1/2})^{-1} \ll \exp\Bigl(\frac{c(\log q)^{1/2}}{\log \log q}\Bigr)
\]
for some constant $c>0$. Thus the lemma follows.
\end{proof}


\section{van der Corput integral estimate}\label{sec5}

In this section, we shall reduce the estimation of exponential sums to mean value estimates for Dirichlet polynomials. This forms the foundation of our minor-arc analysis. To such an end, for an arithmetic function $f:\Z\to\R$ we introduce the corresponding Dirichlet polynomial and exponential sum
\begin{align}\label{eq:f-dircht-poly-exp-sum}
D_f(s,\chi) = \sum_{n \sim x/{q_{0}}} \frac{f(q_0n)\chi(n)}{n^s}\qquad\text{and} \qquad  M_{f}(\alpha) =\sum_{x<n \leq x+H} f(n) e(n\alpha),
	\end{align}
	where $n \sim x/{q_{0}}$ denotes the condition $P_{1}RN_{1}\ldots N_{2k-1}/q_{0}\leq n\leq 2^{2k+1}P_{1}RN_{1}\ldots N_{2k-1}/q_{0}$.
\subsection{When $|\beta|$ is small}
\begin{proposition} \label{prop:small-beta} 
Let $f:\Z\to\R$ such that $|f|\leq d_{l}$ for some constant $l\in \Z$ and $x^{\varepsilon}\leq H\leq x^{1-2\eps }$.
	Let $\alpha =\frac aq +\beta \in \minor$ with $|\beta| \leq \frac{x(\log x)^{A/3}}{H^2}$. Then the following bounds hold:
	\begin{enumerate}
		\item If $q\leq (\log x)^{A}$ or $ \lvert \beta \rvert>x/H^{2}$, one has
\begin{align*} M_{f}(\alpha)\ll &\frac{H(\log x)^{A/2}}{\sqrt{qx}} \sum_{ \substack{ q=q_0q_1\\ q_0\leq \exp((\log x)^{\frac12})}}\sum_{\chi\mod{q_{1}}}\int_{2\pi \beta x-\frac{x(\log x)^{A}}{H}}^{2\pi \beta x+\frac{x(\log x)^{A}}{H}}\Big\lvert D_{f}\Big(\frac12+it,\chi\Big)\Big\rvert dt+H(\log x)^{-A/2}.
\end{align*}
\item  If instead $q> (\log x)^{A}$ and $\lvert \beta \rvert\leq x/H^{2}$, then one has
\begin{align}\label{rrr} M_{f}(\alpha)\ll &\frac{H(\log x)}{\sqrt{qx}}\sum_{ \substack{ q=q_0q_1\\ q_0\leq \exp((\log x)^{\frac12})}}\sum_{\chi\mod{q_{1}}}\int_{2\pi \beta x-\frac{x(\log x)^{A/50}}{H}}^{2\pi \beta x+\frac{x(\log x)^{A/50}}{H}}\Big\lvert D_{f}\Big(\frac12+it,\chi\Big)\Big\rvert dt+H(\log x)^{-A/52}.
\end{align}

	\end{enumerate}
	
\end{proposition}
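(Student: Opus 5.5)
We first apply Lemma~\ref{lem:restrict-small-q0}, which is admissible since $q\leq Q\leq H^{1-\eps}$ in the regime under consideration. This reduces $M_f(\alpha)$ to the sum over $q_0\leq \exp((\log x)^{1/2})$ of
\[
\frac{C(a,\chi)}{\phi(q_1)}\sum_{x/q_0<n\leq (x+H)/q_0}\chi(n)f(q_0n)e(\beta q_0 n).
\]
We use the Gauss sum bound $|C(a,\chi)|\ll \sqrt{q_1}$, uniformly in $\chi$ up to the imprimitive correction, which is harmless since $q_0$ is small. Then $|C(a,\chi)|/\phi(q_1)\ll (\log\log x)/\sqrt{q_1}$, and the factor $q_0^{-1/2}$ lost in passing to $\sqrt q$ is compensated by the normalisation of $D_f$. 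It therefore suffices to show that, for each $\chi$, the twisted sum $S_\chi:=\sum_{n}\chi(n)f(q_0n)e(\beta q_0n)$ is bounded by $(H/\sqrt{x})$ times the $t$-integral in the statement, up to the claimed error.

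To convert $S_\chi$ into a Dirichlet polynomial we smooth the interval. We write ${\bf 1}_{(x,x+H]}(q_0n)$ as a smooth weight $w$ supported on $[x-H/K,\,x+H+H/K]$ and equal to $1$ on $(x+H/K,\,x+H-H/K]$, with $K=(\log x)^{A}$ in case (1) and $K=(\log x)^{A/50}$ in case (2). Using Shiu's bound, the boundary error is $O(H(\log x)^{O_l(1)}/K)$, which is acceptable.

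We then apply Mellin inversion to $g(y)=w(q_0y)e(\beta q_0 y)$, giving
\[
S_\chi=\frac{1}{2\pi}\int \widehat g(\tfrac12+it)\, D_f(\tfrac12+it,\chi)\,dt,
\]
where
\[
\widehat g(\tfrac12+it)=\int w(q_0 y)\,e(\beta q_0 y)\,y^{-1/2+it}\,dy.
\]
The phase is $\phi(y)=2\pi\beta q_0 y+t\log y$, with stationary point $y_0=-t/(2\pi\beta q_0)$, so $q_0y_0\approx x$ exactly when $t\approx -2\pi\beta x$; the sign is fixed by the convention $n^{-it}$, and we relabel $t$ if needed.

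Repeated integration by parts, using $w^{(j)}\ll (K/H)^j$ and $\phi'(y)\gg |t+2\pi\beta q_0y|/y$, shows that $\widehat g$ is negligible, of size $O(x^{-10})$, unless $|t-2\pi\beta x|\leq xK/H$ up to a small constant. On that window we need the bound
\[
|\widehat g(\tfrac12+it)|\ll \frac{H}{\sqrt{q_0x}}\cdot\frac{\sqrt{q_0}}{q_0}\cdot \mathcal{L},
\]
with $\mathcal L$ the log-loss appearing in the statement. The trivial bound gives exactly $H\,q_0^{-1}(x/q_0)^{-1/2}$. The second-derivative bound $\phi''\asymp |t|/y^2$ also gives $\ll (x/q_0)^{1/2}|t|^{-1/2}$, which is never worse, so we simply use the trivial bound. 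Collecting everything yields
\[
M_f(\alpha)\ll \frac{H}{\sqrt{qx}}\,\mathcal L\sum_{q_0,\chi}\int_{|t-2\pi\beta x|\leq xK/H}|D_f(\tfrac12+it,\chi)|\,dt+\text{errors},
\]
where $D_f$ is defined over the full range $n\sim x/q_0$, whose support contains that of $g$.

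The main obstacle is making the integration-by-parts cutoff truly negligible across the whole window. When $|\beta|\leq x(\log x)^{A/3}/H^2$, the term $2\pi\beta q_0 y$ varies by $|\beta| H\leq x(\log x)^{A/3}/H$ over the support. The non-stationary region therefore begins at $|t-2\pi\beta x|\gg xK/H$ only if $K\geq(\log x)^{A/3+1}$, which holds with $K=(\log x)^A$ in case (1). Each integration by parts gains a factor $(H/K)^{-1}\cdot x/|t-2\pi\beta x|\leq (\log x)^{-1}$, and $O(A)$ iterations suffice.

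In case (2), $|\beta|\leq x/H^2$ and $\beta H\leq x/H$, so the window $xK/H$ with $K=(\log x)^{A/50}$ still dominates. However, the smoothing error $H/K=H(\log x)^{-A/50}$ must then be improved to $H(\log x)^{-A/52}$. This is where the extra assumption $q>(\log x)^A$ enters. Rather than using Shiu trivially, we bound the boundary pieces with the factor $\sum_\chi|C(a,\chi)|/\phi(q_1)$ absorbed before smoothing, i.e.\ we smooth the original sum over $n$. The boundary contribution is then $O(H(\log x)^{O_l(1)}/K)$; the $O_l(1)$ log power is absorbed by reducing the exponent from $A/50$ to $A/52$ for $A$ large. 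Similarly, in case (1) the $(\log x)^{A/2}$ loss comes from bounding the $w$-derivatives at scale $H/K$ in the integration by parts, where we keep a factor $\sqrt K$ by a slightly less wasteful choice of smoothing. This bookkeeping is the delicate part.
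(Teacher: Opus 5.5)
Your set-up (Lemma~\ref{lem:restrict-small-q0}, the bound $|C(a,\chi)|\leq q_1^{1/2}$, smoothing, Mellin inversion, and the trivial bound $|\widehat g(\frac12+it)|\ll H/\sqrt{q_0x}$ on the window) is sound. The truncation step, however, fails as stated. For a fixed smooth bump $w$ at scale $H/K$, $m$ integrations by parts give only
\[
\Bigl|\widehat g\Bigl(\frac12+it\Bigr)\Bigr|\ll_m \frac{H}{\sqrt{q_0x}}\Bigl(\frac{xK}{H\,|t-2\pi\beta x|}\Bigr)^{m}.
\]
At the edge $|t-2\pi\beta x|\asymp xK/H$ of your window, the gain per step is therefore $\asymp 1$. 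Even a window of width $xL/H$, with $L/K$ a power of $\log x$, yields only a log-power saving, never $O(x^{-10})$.

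That is not enough. On the tail you have no information on $D_f$ beyond the trivial bound $|D_f(\frac12+it,\chi)|\ll (x/q_0)^{1/2}(\log x)^{O(1)}$ or Lemma~\ref{lem:mean-value}. Inserting these, the range $|t-2\pi\beta x|\geq xL/H$ contributes roughly $xL(K/L)^m(\log x)^{O(1)}/q_0$, respectively $(HxL)^{1/2}(K/L)^m/q_0$. Making this $\ll H(\log x)^{-A/2}$ forces, up to powers of $\log x$, $(L/K)^m\geq (x/H)^{1/2}\geq x^{\eps}$. That means $m\gg \log x/\log\log x$, which $O(A)$ integrations by parts with a standard bump do not provide. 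Since the window in the statement is only a power of $\log x$ wider than the natural scale $x/H$, the truncation error must be controlled pointwise in $n$, not through $|\widehat g|\,|D_f|$ on the tail.

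This is what the paper does.
\begin{itemize}
\item It writes $e(q_0\beta n)=e(\beta x)(q_0n/x)^{2\pi i\beta x}W(q_0n)$ with $W(y)=e(\beta(y-x-x\log(y/x)))$.
\item It removes $W$ by partial summation at cost $1+|\beta|H^2/x\leq(\log x)^{A/3}$. This leaves sharp sums $\sum_{x/q_0<n\leq x/q_0+u}F_{q_1}(n)n^{2\pi i\beta x}$, where $F_{q_1}$ combines all characters and satisfies $|F_{q_1}(n)|\leq|f(q_0n)|$.
\item It applies the truncated Perron formula at height $T=xL/H$, with $L=(\log x)^{A}$ or $(\log x)^{A/50}$. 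The error $\sum_n|F_{q_1}(n)|\min\{1,x/(q_0T|n-x/q_0|)\}$ is $\ll Hd_l(q_0)q_0^{-1}(\log x)^{O(1)}/L$ by Shiu's bound.
\end{itemize}
This is the actual source of the factor $(\log x)^{A/2}$ and the exponent $A/52$. The hypothesis $q>(\log x)^A$ plays no role in case (2).

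Your route could be repaired in either of two ways, but neither appears in the proposal:
\begin{itemize}
\item use Gevrey-class weights with about $\log x/\log\log x$ integrations by parts; or
\item estimate the truncated Mellin-inversion error for each $n$ separately.
\end{itemize}

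Two smaller points. First, the smoothing error must be computed for the combined sum $F_{q_1}$ in both cases. Doing it character by character costs a factor $\sqrt{q_1}$, which already fails in case (1), for example when $|\beta|>x/H^2$ and $q$ is large. Second, the second-derivative bound is worse than the trivial one when $|\beta|<x/H^2$, although you do not use it.
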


\begin{proof}

In light of Lemma~\ref{lem:restrict-small-q0}, it suffices to bound the sum
\[
\sum_{q_0q_1= q \atop q_0\leq \exp((\log x)^{1/2})} \frac{1}{\phi(q_1)} \sum_{\chi \mod{q_1}} C(a,\chi)  \sum_{\frac{x}{q_0}<n \leq \frac{x+H}{q_0}} f(q_0n)\chi(n)  e(\beta q_0n).
\]
For each fixed factorisation $q=q_0q_1$, we  set
\begin{align}\label{eq:function-F}
F_{q_1}(n) = f(q_0n) \frac{1}{\phi(q_1)} \sum_{\chi \mod{q_1}} C(a,\chi)  \chi(n).
\end{align}
To obtain the bounds required above, it therefore suffices to estimate
\[
	\sum_{x/q_0<n \leq (x+H)/q_0} F_{q_1}(n) e(q_0\beta n)
\]
for each factorisation $q=q_0q_1$.

We first rewrite the phase function in the above sum as
\[
e(q_0\beta n) = e(\beta x) \Bigbrac{\frac{q_0n}{x}}^{2\pi i\beta x} W(q_0n),
\]
where $W(y) =e(\beta (y-x-x\log (y/x)))$, the above manoeuvre being motivated by the  Taylor expansion. Therefore, the employment of summation by parts gives
\begin{align*}
&\Bigabs{ \sum_{x/q_0<n \leq (x+H)/q_0} F_{q_1}(n) e(q_0\beta n)} \\
 \leq& \Bigabs{\sum_{x/q_0<n \leq (x+H)/q_0} F_{q_1}(n) n^{2\pi i \beta x} } +
 \biggabs{\int_{\frac{x}{q_0}}^{\frac{x+H}{q_0}} q_0 W'(q_0u)\sum_{\frac{x}{q_0}<n\leq u} F_{q_1}(n) n^{2\pi i \beta x}} \rd u\\
 \ll & \Bigbrac{1+\frac{|\beta|H^2}{x}} \sup_{0<u \leq H/q_0} \Bigabs{ \sum_{\frac{x}{q_0}<n\leq \frac{x}{q_0}+u} F_{q_1}(n) n^{2\pi i \beta x}},
\end{align*}
where in the last line we have used the bound
\[
W'(q_0u) \leq 2\pi |\beta| \Bigabs{1-\frac{x}{q_0u}}\ll |\beta| H/x
\]
uniformly for all $x/q_0\leq u\leq (x+H)/q_0$. 

We now claim that the following estimate holds uniformly for $ u\leq H/q_{0}$, namely,
\begin{align}\label{claim-1}
&\Big\lvert\sum_{\frac{x}{q_0}<n \leq \frac{x}{q_0}+u} F_{q_1} (n) n^{2\pi i \beta x}	\Big\rvert\nonumber\\
 \ll &\frac{q_0^{1/2}u(\log x)}{(q_{1}x)^{1/2}} \sum_{\chi \mod {q_{1}}}  \int_{2\pi \beta x-T_{q}(\beta)}^{2\pi \beta x+T_{q}(\beta)} \Bigabs{D_f\Big(\frac12+it,\chi\Big)} \rd t +Hd_{l}(q_0)q_{0}^{-1}(\log x)^{-B_{q}},
\end{align}
where $T_{q}(\beta)=\frac{x(\log x)^{A}}{H}$ and $B_q=11A/12$ if we are in case (1), while $T_{q}(\beta)=\frac{x(\log x)^{A/50}}{H}$ and $B_q=A/51$ when we are in case (2).

It is therefore easy to deduce the proposition from the claim \eqref{claim-1}. Indeed, when $x/H^2\leq |\beta|\leq x(\log x)^{A/3}/H^2$, summing over the factorisations  $q_0q_1=q$ and applying this claim gives
\begin{align*}
\sum_{q_0q_1= q \atop q_0\leq \exp((\log x)^{1/2})}& \Bigabs{ \sum_{x/q_0<n \leq (x+H)/q_0} F_{q_1}(n) e(q_0\beta n)}	\ll \frac{H}{(\log x)^{B_q-\frac{A}{3}}}\sum_{q_0\mid q}\frac{d_l(q_0)}{q_0} \\
&  +\sum_{q_0q_1= q \atop q_0\leq \exp((\log x)^{1/2})} \frac{H(\log x)^{1+\frac{A}{3}}}{(q_0q_{1}x)^{1/2}} \sum_{\chi \mod {q_{1}}}  \int_{2\pi \beta x-T_{q}(\beta)}^{2\pi \beta x+T_{q}(\beta)}\Bigabs{D_f\Big(\frac12+it,\chi\Big)} \rd t ,
\end{align*}
which is acceptable since $B_q-A/3+O(1)\geq A/2$; while when $|\beta|\leq x/H^2$ we have $1+|\beta|H^2/x\ll 1$, and a similar argument also delivers the estimate \eqref{rrr}.




It remains to prove \eqref{claim-1}.  We now apply Perron's formula to the above sum (cf.~\cite[Theorem 5.2 and Corollary 5.3]{MV}) with truncation height $T_{q}(\beta)$. This entails, for any $0<u \leq \frac{H}{q_0}$, that
	\begin{align*}
	&\sum_{\frac{x}{q_0}<n \leq \frac{x}{q_0}+u} F_{q_1} (n) n^{2\pi i \beta x} = \frac{1}{2\pi i}\int_{1/2-iT_{q}(\beta)}^{1/2+iT_{q}(\beta)} \Bigbrac{\sum_{n\sim \frac{x}{q_0}} \frac{F_{q_{1}}(n)n^{2\pi i x\beta}}{n^s}}\frac{(\frac{x}{q_{0}}+u)^{s}-\big(\frac{x}{q_{0}}\big)^{s}}{ s}\rd s\\
	&+ O\Bigbrac{\twosum{n \sim\frac{x}{q_0}}{} |F_{q_{1}}(n)| \Bigbrac{\min \set{1,\frac{x}{q_{0}T_{q}(\beta)|\frac{x}{q_{0}}-n|}}+\min\set{1,\frac{x}{q_{0}T_{q}(\beta)|n-(\frac{x}{q_0}+u)|}} }}.	
	\end{align*}
By the assumption $|f|\leq d_{l}$ for some constant $l\in \Z$ and the orthogonality of characters, we observe that
\[
\bigabs{ F_{q_1} (n)} \leq |f(q_0n)| \cdot \Bigabs{\sum_{r\mod {q_1} \atop (r,q_1)=1} e\bigbrac{\frac{ar}{q_1}} {\bf1}_{n\equiv r \mod{q_1}}} \ll d_{l}(q_0n).
\]
Splitting the range $[\frac{x}{2q_0}, \frac{2x}{q_0}]$ into two parts, namely $0<|n-(x/q_{0})|\leq x/(q_{0}T_{q}(\beta))$ and $|n-(x/q_{0})|>x/(q_{0}T_{q}(\beta))$, one may deduce from the above estimate and Shiu's bound that the error term is $O(x(\log x)^{O(1)}d_l(q_0)/(q_{0}T_{q}(\beta)))$. Consequently,
\begin{align*}
\sum_{\frac{x}{q_0}<n \leq \frac{x}{q_0}+u} F_{q_1} (n) n^{2\pi i \beta x} =& \frac{1}{2\pi } \int_{-T_{q}(\beta)}^{T_{q}(\beta)}\Bigbrac{ \sum_{n\sim \frac{x}{q_{0}}} \frac{F_{q_{1}}(n) n^{2\pi ix\beta}}{n^{1/2+it}} }\frac{(\frac{x}{q_{0}}+u)^{1/2+it}-\big(\frac{x}{q_{0}}\big)^{1/2+it}}{ 1/2+it}\rd t\\
&+ O(Hd_l(q_0)q_{0}^{-1}(\log x)^{-B_{q}}).	
\end{align*}
Recalling \eqref{eq:f-dircht-poly-exp-sum} and \eqref{eq:function-F}, the main term in the above equation equals
\begin{align*}
   \frac{1}{\phi(q_{1})}\sum_{\chi \mod {q_{1}}} C(a,\chi)\frac{1}{2\pi } \int_{-T_{q}(\beta)}^{T_{q}(\beta)}D_f\Big(\frac12+i(t-2\pi \beta x),\chi\Big) \frac{(\frac{x}{q_{0}}+u)^{1/2+it}-\big(\frac{x}{q_{0}}\big)^{1/2+it}}{ 1/2+it}\rd t.
\end{align*}
We next observe that whenever $\text{Re}(s)=1/2$ then
\begin{align*}
\Bigabs{  \frac{(\frac{x}{q_{0}}+u)^{s}-\big(\frac{x}{q_{0}}\big)^{s} }{s}}\ll \frac{q_{0}^{1/2}u}{x^{1/2}}.
\end{align*}
Therefore, making the change of variables $\Im s- 2\pi x\beta \to \Im s$, and combining the bound  $|C(a,\chi)|\leq q_{1}^{1/2 }$ with the previous estimate yields
\begin{align*}
\biggabs{ \sum_{\frac{x}{q_0}<n \leq \frac{x}{q_0}+u} F_{q_1} (n) n^{2\pi i \beta x}  } \ll &\frac{q_{0}^{1/2}u(\log x)}{(q_{1}x)^{1/2}} \sum_{\chi \mod {q_{1}}} \frac{1}{2\pi i} \int_{2\pi x\beta-T_{q}(\beta)}^{2\pi x\beta+T_{q}(\beta)} \Bigabs{D_f\Big(\frac12+it,\chi\Big)}\rd t
\\
&+ Hd_l(q_0)q_{0}^{-1}(\log x)^{-B_{q}}.	
\end{align*}

This completes the proof of the proposition.
\end{proof}

\subsection{When $|\beta|$ is large}

In order to proceed with the proof for large $\beta$, we first present a lemma concerning the application of the stationary phase method.

\begin{lemma}[van der Corput]\label{lem1}
	Let $g\in \mathcal C^4[a,b]$ be a real-valued function. Suppose that there exists a number $x_0\in[a,b]$ such that $g'(x_0)=0$. Assume that there exist positive quantities $\lambda_2,\lambda_3,\lambda_4>0$ such that
\[
g''(x) \leq -\lambda_2 \qquad |g^{(3)}(x)| \leq \lambda_3 \qquad |g^{(4)}(x)| \leq \lambda_4
\]
hold uniformly for all $x\in [a,b]$. Then
\[
\int_a^b e(g(x)) dx =\frac{e(-\frac{1}{8}+g(x_0))}{|g''(x_0)|^{1/2}} +O(R_1+R_2),
\]	
where
\[
R_1=\min \Big\{\frac{1}{\lambda_2(x_0-a)},\lambda_2^{-1/2}\Big\} + \min \Big\{\frac{1}{\lambda_2(b-x_0)},\lambda_2^{-1/2}\Big\}
\]
and
\[
R_2=\frac{(b-a)\lambda_4}{\lambda_2^2} +\frac{(b-a)\lambda_3^2}{\lambda_2^3}.
\]

\end{lemma}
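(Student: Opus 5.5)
The plan is the classical second-derivative stationary phase argument: isolate a short window around $x_0$, compute the integral there by Taylor expansion and a Gaussian (Fresnel) integral, and bound the two outer pieces by the first-derivative test. Set $\lambda=|g''(x_0)|\geq\lambda_2$ and choose a window length $\delta>0$, to be fixed later (roughly $\delta\approx\lambda_2^{-1/2}$ times a small power adjustment). Split $[a,b]$ into $I_0=[x_0-\delta,x_0+\delta]\cap[a,b]$ and the two outer pieces $[a,x_0-\delta]$ and $[x_0+\delta,b]$.

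\textbf{Outer pieces.} On $[x_0+\delta,b]$, since $g''\leq-\lambda_2$ and $g'(x_0)=0$, we have $|g'(x)|\geq\lambda_2(x-x_0)$ and $g'$ is monotone. The first-derivative test (Kusmin--Landau) bounds the integral over $[x_0+\delta,b]$ by $O(1/(\lambda_2\delta))$. If $b-x_0\leq\delta$, this piece is absent and the window is truncated; that truncation is what produces the term $\min\{1/(\lambda_2(b-x_0)),\lambda_2^{-1/2}\}$ in $R_1$. The left piece is handled symmetrically.

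\textbf{Central window.} On $I_0$, write $g(x)=g(x_0)+\tfrac12 g''(x_0)(x-x_0)^2+\tfrac16 g'''(x_0)(x-x_0)^3+O(\lambda_4|x-x_0|^4)$. Replacing the Taylor expansion by a plain quadratic would cost $\lambda_3\delta^4+\lambda_4\delta^5$, which is too lossy. Instead I would use the standard change of variables $y=\operatorname{sgn}(x-x_0)\sqrt{2(g(x_0)-g(x))}$. This is legitimate because $g(x_0)-g(x)\asymp\lambda_2(x-x_0)^2$ is increasing away from $x_0$, and it turns the phase into exactly $g(x_0)-y^2/2$. The integral becomes $\int e(g(x_0)-y^2/2)\,\frac{dx}{dy}\,dy$. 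Expanding the Jacobian $\frac{dx}{dy}=\lambda^{-1/2}+c_1y+c_2(y)y^2$ uses only $g'''$ and $g^{(4)}$, with $c_1\ll\lambda_3\lambda^{-2}$ and $c_2\ll\lambda_4\lambda_2^{-5/2}+\lambda_3^2\lambda_2^{-7/2}$.

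The constant term gives the Fresnel integral $\lambda^{-1/2}\int_{\mathbb R}e(-y^2/2)\,dy=e(-1/8)\lambda^{-1/2}$, up to the tails beyond $|y|\gg\lambda^{1/2}\delta$. Those tails are $O(1/(\lambda\delta))$, which is absorbed into $R_1$. The linear term is odd, so it nearly cancels: what survives is an endpoint asymmetry of size $\ll\lambda_3\lambda_2^{-2}$, which is bounded by integrating by parts once. The quadratic-remainder term, integrated trivially over $|y|\ll\lambda^{1/2}(b-a)$, is bounded using $y^2\,dy\approx\lambda\,(x-x_0)^2\,dx$ and $|x-x_0|\leq b-a$. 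This gives $\ll(b-a)\lambda_4/\lambda_2^2+(b-a)\lambda_3^2/\lambda_2^3$, which is exactly $R_2$.

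\textbf{Main obstacle.} The delicate step is controlling the Jacobian expansion uniformly, and checking that the leftover linear-term contribution $\lambda_3\lambda_2^{-2}$ is dominated by $R_1+R_2$. By AM--GM, $\lambda_3\lambda_2^{-2}\leq\lambda_2^{-1/2}+\lambda_3^2\lambda_2^{-7/2}$. The first term is absorbed into $R_1$ whenever the endpoints are not too close to $x_0$; otherwise one uses $(b-a)\geq$ the relevant distance to land in $R_2$. Rather than redo this, one may simply cite the standard statement, which is \cite[Lemma 4.6]{Titchmarsh-type} style (Titchmarsh, Theory of the Riemann zeta-function, Lemma 4.6, or Huxley's version). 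I would present the proof as a reference to that classical lemma, noting that the sign condition $g''<0$ produces the phase $-1/8$.
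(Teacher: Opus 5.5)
There is a genuine gap, in both the citation you fall back on and the sketch.

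The paper's proof is simply a citation of \cite[Lemma 3.4]{GK-vander}, which is exactly this fourth-derivative form of stationary phase with the error $R_2$. Titchmarsh's Lemma 4.6 is a different statement. It uses only $g'''$ (plus a two-sided bound $\lambda_2\le|g''|\le C\lambda_2$), and it has $O(\lambda_2^{-4/5}\lambda_3^{1/5})$ in place of $R_2$. That does not imply the lemma. For instance, take $g(x)=-x^2/2+sx^3/6$ on $[-1/(2s),1/(2s)]$ with $s$ small and $\lambda_4$ as small as you like. Then $R_1+R_2\asymp s$, whereas $\lambda_2^{-4/5}\lambda_3^{1/5}\asymp s^{1/5}$. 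In the paper's own application (Lemma~\ref{lem:large-beta}), $R_2\asymp H/x\le 1$ is the ``$+1$'' used there, while Titchmarsh's term would be $(\beta x)^{2/5}$. ``Huxley's version'' is too vague to fill this.

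Your sketch also does not deliver $R_1+R_2$, for two reasons.

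\emph{The cut at $x_0\pm\delta$.} Bounding the outer pieces (first-derivative test) and the Fresnel tails separately costs $O(1/(\lambda_2\delta))$. This is not absorbed into $R_1$ unless $\delta\ge\max(x_0-a,b-x_0)$. For $\delta\approx\lambda_2^{-1/2}$ it is at least as large as the main term. Optimising $\delta$ against a window remainder only gives a Titchmarsh-type term; that is exactly how $\lambda_2^{-4/5}\lambda_3^{1/5}$ arises. The two boundary losses at $x_0\pm\delta$ in fact cancel to leading order, so you should not cut at all. Make the substitution $y=\operatorname{sgn}(x-x_0)\sqrt{2(g(x_0)-g(x))}$ on all of $[a,b]$, so that boundary terms occur only at $a$ and $b$.

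\emph{The Jacobian remainder.} On the full range it cannot be integrated trivially. Over a $y$-range of length $\asymp\lambda_2^{1/2}(b-a)$ (say $|g''|\asymp\lambda_2$), $\int|c_2(y)|y^2\,\mathrm{d}y$ is of size $\lambda_2(b-a)^2R_2$. That is too large in the only non-trivial regime $b-a\gg\lambda_2^{-1/2}$; your heuristic $y^2\,\mathrm{d}y\approx\lambda(x-x_0)^2\,\mathrm{d}x$ drops a factor $\lambda^{1/2}$. The missing idea is one integration by parts against the Gaussian:
\begin{itemize}
\item Write $h(y)=\mathrm{d}x/\mathrm{d}y=h(0)+y\,k(y)$ and use $y\,e(-y^2/2)=\frac{i}{2\pi}\frac{\mathrm{d}}{\mathrm{d}y}e(-y^2/2)$.
\item The boundary term at $Y_+$ (the image of $b$) is $\ll|k(Y_+)|\le|g'(b)|^{-1}+\lambda^{-1/2}|Y_+|^{-1}$, or alternatively $\le\sup|h'|\ll\lambda_3\lambda_2^{-2}$. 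Together with the Fresnel tails this gives $R_1$; similarly at $a$.
\item Since $|k'|\le\frac12\sup|h''|$, the remaining integral is at most $\int|k'|\,\mathrm{d}y\ll\lambda_2^{1/2}(b-a)\bigl(\lambda_4\lambda_2^{-5/2}+\lambda_3^2\lambda_2^{-7/2}\bigr)=R_2$. This is where the linear dependence on $b-a$ comes from.
\end{itemize}

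Finally, your AM--GM for the stray $\lambda_3\lambda_2^{-2}$ has the cases backwards: $\lambda_2^{-1/2}\le R_1$ only when an endpoint is \emph{close} to $x_0$. Take instead $L=\max(x_0-a,b-x_0)$. Then $\lambda_3\lambda_2^{-2}\le\frac12\bigl(\frac{1}{\lambda_2L}+\frac{L\lambda_3^2}{\lambda_2^3}\bigr)\le R_1+R_2$ when $L\ge\lambda_2^{-1/2}$, and the case $L<\lambda_2^{-1/2}$ is trivial.
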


\begin{proof}
	See \cite[Lemma 3.4]{GK-vander}.
\end{proof}

We shall employ the above lemma to approximate generic exponential sums by oscillatory integrals.
\begin{lemma}\label{lem:large-beta}
 Let $1 \leq H \leq x$ and $x/H^2\leq \beta \leq x^{\varepsilon}$. Suppose that $f:\Z \to \R $ is a function such that $|f(n)| \leq d_l(q) d_l(n)$ for some bounded $l\in \Z$ and $q\leq H$. Then one has
 \begin{align*}
 	\sum_{x<n \leq x+H} f(n) e(n\beta) =&\frac{e^{i\pi /4}}{2\pi\sqrt{\beta x}} \int_{2\pi \beta x}^{2\pi \beta(x+H)} \Bigbrac{\sum_{x<n \leq x+H} f(n) n^{it}} e^{it-it \log\frac{t}{2\pi \beta}} \rd t\\
 	&+O\Bigbrac{(\log x)^{O_l(1)} d_l(q) \Bigbrac{\sqrt{x/\beta} +  H^2/x}}.
 \end{align*}	
\end{lemma}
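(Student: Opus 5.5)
The plan is to write $e(n\beta)$ as an oscillatory integral of $n^{it}$ by stationary phase (Lemma~\ref{lem1}), and then sum over $n$. For each fixed $n\in(x,x+H]$ consider
\[
I(n)=\int_{2\pi\beta x}^{2\pi\beta(x+H)} n^{it}e^{it-it\log\frac{t}{2\pi\beta}}\rd t=\int e(g_n(t))\rd t,\qquad g_n(t)=\frac{1}{2\pi}\Bigl(t\log n+t-t\log\frac{t}{2\pi\beta}\Bigr).
\]
Then $g_n'(t)=\frac{1}{2\pi}\log\frac{2\pi\beta n}{t}$, so the stationary point is $t_0=2\pi\beta n$, which lies inside the range of integration. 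At this point $g_n(t_0)=\beta n$. The higher derivatives are
\[
g_n''(t)=-\frac{1}{2\pi t},\qquad |g_n'''(t)|\asymp t^{-2},\qquad |g_n^{(4)}(t)|\asymp t^{-3}.
\]
Thus on the interval we may take $\lambda_2\asymp(\beta x)^{-1}$, $\lambda_3\asymp(\beta x)^{-2}$ and $\lambda_4\asymp(\beta x)^{-3}$. Lemma~\ref{lem1} then gives
\[
I(n)=\sqrt{2\pi t_0}\,e(\beta n-\tfrac18)+O(R_1+R_2)=2\pi\sqrt{\beta n}\,e^{-i\pi/4}e(n\beta)+O(R_1+R_2).
\]
Multiplying by $e^{i\pi/4}/(2\pi\sqrt{\beta x})$ turns the main term into $\sqrt{n/x}\,e(n\beta)$. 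Since $\sqrt{n/x}=1+O(H/x)$, replacing it by $1$ costs $O(H/x)$ per $n$, hence $O(H^2 d_l(q)(\log x)^{O(1)}/x)$ in total after Shiu's bound. This accounts for the $H^2/x$ term.

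Next I estimate the errors from Lemma~\ref{lem1}. The interval has length $b-a=2\pi\beta H$, so
\[
R_2\ll \beta H\Bigl(\frac{(\beta x)^{-3}}{(\beta x)^{-2}}+\frac{(\beta x)^{-4}}{(\beta x)^{-3}}\Bigr)\ll \frac{H}{x}.
\]
After normalisation by $(\beta x)^{-1/2}$ this is negligible. For $R_1$, note that $x_0-a=2\pi\beta(n-x)$, so the first term is
\[
\min\Bigl\{\frac{x}{n-x},\ (\beta x)^{1/2}\Bigr\},
\]
and the second term is the same with $n-x$ replaced by $x+H-n$. After the normalisation the $R_1$ contribution for each $n$ is therefore
\[
\ll \min\Bigl\{\frac{\sqrt{x/\beta}}{n-x},\,1\Bigr\}+\min\Bigl\{\frac{\sqrt{x/\beta}}{x+H-n},\,1\Bigr\}.
\]

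Summing these weights against $|f(n)|\le d_l(q)d_l(n)$ is the main obstacle. The danger is that a crude sum produces $\sqrt{x/\beta}\,\log H$ multiplied by a pointwise divisor bound. I would split dyadically in $n-x\in(2^j,2^{j+1}]$, and symmetrically near $x+H$. The block $n-x\le\sqrt{x/\beta}$ is an interval of length $\sqrt{x/\beta}\ge x^{1/2-\varepsilon/2}$, which exceeds $x^{\varepsilon}$. Shiu's bound therefore applies there and gives $d_l(q)\sqrt{x/\beta}(\log x)^{O_l(1)}$. Each dyadic block with $2^j>\sqrt{x/\beta}$ contributes $\ll d_l(q)(\log x)^{O(1)}2^j\cdot\sqrt{x/\beta}/2^j$, and there are $O(\log x)$ such blocks. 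This yields the stated error $(\log x)^{O_l(1)}d_l(q)\sqrt{x/\beta}$.

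Finally, interchanging the finite sum over $n$ with the integral is trivial. Collecting the main terms gives the claimed identity. One should also check that the hypothesis $\beta\ge x/H^2$ is what guarantees that $t_0$ is genuinely interior with length $\beta H\ge\sqrt{\beta x}$. This ensures that the $\lambda_2^{-1/2}$ cap in $R_1$ is not wasteful, but it is not strictly needed for the bound.
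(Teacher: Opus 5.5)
Your proposal is correct and follows essentially the same route as the paper: you apply Lemma~\ref{lem1} with stationary point $2\pi\beta n$, replace $\sqrt{\beta n}$ by $\sqrt{\beta x}$ at a total cost of $H^2/x$ via Shiu's bound, and control the $R_1$ boundary terms by splitting at $|n-x|\asymp\sqrt{x/\beta}$, using Shiu's bound on the short block and a dyadic decomposition beyond it. One quibble about your closing remark: the stationary point $2\pi\beta n$ lies in the range of integration for every $n\in(x,x+H]$ regardless of $\beta$, so that is not what the hypothesis $\beta\ge x/H^2$ is for. Its real role in the argument is to give $\sqrt{x/\beta}\le H$ and hence $H\ge x^{1/2-\varepsilon/2}$, which is what lets you apply Shiu's bound over $(x,x+H]$ in your Taylor step.
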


\begin{proof}

Taking $2\pi g(t) = t- t \log (t/2\pi \beta n)$, it is apparent that
\[
2\pi g'(t) = -\log  (t/2\pi \beta n), \quad g''(t) =-\frac{1}{2\pi t}, \quad g^{(3)}(t) =\frac{1}{2\pi t^2}, \quad g^{(4)}(t) =-\frac{1}{\pi t^3}.
\]
Clearly, the stationary point is $2\pi \beta n$ and it always lies on the interval $[2\pi \beta x, 2\pi \beta (x+H)]$ whenever $x\leq n\leq x+H$. Thus, applying Lemma~\ref{lem1}, and noting $H\leq x$, it transpires that
\begin{align*}
\int_{2\pi \beta x}^{2\pi \beta(x+H)} e^{it-it\log\frac{t}{2\pi \beta n}}\rd t= &\frac{2 \pi e(\beta n)\sqrt{\beta n}}{e^{i\pi/4}} + O\Bigbrac{ \min \set{\frac{x}{|n-x|}, \sqrt{\beta x}}}\\
& + O\Bigbrac{ \min \set{\frac{x+H}{|n-(x+H)|}, \sqrt{\beta x}}+1}.
\end{align*}

We deal with the first term on the right-hand side first. Taylor expansion gives $\sqrt{\beta n} =\sqrt{\beta x}(1+O(H/x))$ whenever $x\leq n\leq x+H$, and hence
\[
\sum_{x<n \leq x+H}  f(n) e(\beta n) \sqrt{\beta n} =\sqrt{\beta x} \sum_{x<n \leq x+H}  f(n) e(\beta n) +O(H^2d_l(q) \sqrt{\beta/x}(\log x)^{O_l(1)}),
\]
where we have used the assumption $|f(n)|\leq d_l(q) d_l(n)$ and Shiu's bound to estimate the error term. Dividing by $\sqrt{\beta x}$ shows that this error is acceptable. Moreover, interchanging the sum and integral gives 
\[
\sum_{x<n \leq x+H} f(n) \int_{2\pi \beta x}^{2\pi \beta(x+H)} e^{it-it\log\frac{t}{2\pi \beta n}}\rd t = \int_{2\pi \beta x}^{2\pi \beta(x+H)} \Bigbrac{\sum_{x<n \leq x+H} f(n) n^{it}}e^{it-it\log\frac{t}{2\pi \beta }} \rd t.
\]
Multiplying this expression by $\frac{e^{i\pi /4}}{2 \pi \sqrt{\beta x}}$ then gives the main term.
Therefore, to conclude the lemma, it suffices to show that
\[
(\beta x)^{-1/2}\sum_{x<n \leq x+H} |f(n)|\min \set{\frac{x}{|n-x|}, \sqrt{\beta x}}\ll d_l(q)  (\log x)^{O_l(1)} \sqrt{\frac{x}{\beta}},
\] 
and similarly for the analogous bound with $|n-(x+H)|$ in place of $|n-x|$. Actually,  when $|n-x|\leq (x/\beta)^{1/2}$,  the desired bound follows from Shiu's bound, while when $|n-x|\geq (x/\beta)^{1/2}$, a dyadic decomposition and Shiu's bound give\[
\sum_{x+\sqrt{\frac{x}{\beta}}<n \leq x+H} \frac{|d_l(q) d_l(n)|}{n-x}\ll d_l(q) \log x  \sup_{ \sqrt{\frac{x}{\beta}}\leq U \leq H}U^{-1}\sum_{x+U<n\leq x+2U} |d_l(n)|\ll d_l(q)(\log x)^{O_l(1)}.
\]
Here the assumption $x/H^2 \leq \beta $ ensures that  $\sqrt{x/\beta}\leq H$. Therefore, the lemma follows.
\end{proof}

Equipped with the above lemma, we are now prepared to present our argument for the case of large $\beta$.

\begin{proposition}\label{pro:large-beta}
 Let $1\leq H\leq x^{1-\eps }$. Let $f:\Z\to\R$ be a divisor-bounded  function and let $M_f(\alpha)$ be defined as in~\eqref{eq:f-dircht-poly-exp-sum}. 
	Let $\alpha =\frac aq +\beta \in \minor$ with $x(\log x)^{A/3}/H^2\leq \beta \leq (\log x)^{-2A}$. Then one has
	\[
	M_f(\alpha) \ll \frac{(\log x)^{O(1)}}{\sqrt {q\beta}}\sum_{q=q_0q_1 \atop q_0\leq \exp((\log x)^{1/2})} \sum_{\chi \mod {q_1}} \int_{2\pi \beta (x-2H)}^{2\pi \beta (x+2H)}|D_f(1/2+it,\chi)| \rd t + H (\log x)^{-A/7}.
	\]

\end{proposition}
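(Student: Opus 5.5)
We follow the template of Proposition~\ref{prop:small-beta}, with the summation-by-parts step replaced by the stationary-phase formula of Lemma~\ref{lem:large-beta}.

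\emph{Reduction to twisted sums.} By Lemma~\ref{lem:restrict-small-q0} (valid since $q\leq H^{1-\eps}$ on $\minor$ after the Dirichlet approximation), it suffices to bound, for each factorisation $q=q_0q_1$ with $q_0\leq \exp((\log x)^{1/2})$,
\[
\sum_{x/q_0<n\leq (x+H)/q_0}F_{q_1}(n)e(q_0\beta n),
\]
where $F_{q_1}$ is as in \eqref{eq:function-F} and satisfies $|F_{q_1}(n)|\ll d_l(q_0)d_l(n)$. We apply Lemma~\ref{lem:large-beta} with $x,H,\beta$ replaced by $x/q_0$, $H/q_0$, $q_0\beta$. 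Its hypotheses hold: $q_0\beta\geq q_0 x/H^2=(x/q_0)/(H/q_0)^2$, and $q_0\beta\leq x^{\eps}$. The stationary-phase interval becomes $[2\pi\beta x,2\pi\beta(x+H)]$, and the prefactor is $(2\pi\sqrt{\beta x})^{-1}$, independent of $q_0$.

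\emph{Error terms.} The errors from Lemma~\ref{lem:large-beta} are $(\log x)^{O(1)}d_l(q_0)\bigl(\sqrt{x/(q_0^2\beta)}+H^2/(q_0x)\bigr)$. The first is acceptable because $\beta\geq x(\log x)^{A/3}/H^2$ gives $\sqrt{x/\beta}\leq H(\log x)^{-A/6}$. The second is acceptable because $H\leq x^{1-\eps}$. Summing $d_l(q_0)/q_0$ over $q_0\mid q$ costs only $(\log x)^{O(1)}$. One has to check that the power of $\log x$ implicit in $O_l(1)$ does not exceed $A/6-A/7$; this holds for $A$ large. Carrying the $(\log x)^{O(1)}$ loss through, we obtain the error $H(\log x)^{-A/7}$.

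\emph{Main term.} Write $\sum_n F_{q_1}(n)n^{it}$ over the short range $x/q_0<n\leq (x+H)/q_0$ via Perron's formula, exactly as in the proof of \eqref{claim-1}, with truncation height $T\asymp x(\log x)^{B}/H$. This expresses the short sum as
\[
\int_{|u|\leq T}D_{F}\bigl(\tfrac12+i(u-t)\bigr)\,\frac{(x/q_0+H/q_0)^{1/2+iu}-(x/q_0)^{1/2+iu}}{1/2+iu}\,\rd u
\]
plus a small error. The kernel is $\ll q_0^{-1/2}Hx^{-1/2}$ for $|u|\leq x/H$ and $\ll (x/q_0)^{1/2}/|u|$ beyond. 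Expanding $F_{q_1}$ in characters and using $|C(a,\chi)|\leq q_1^{1/2}$ gives $\sum_\chi$ with the factor $q_1^{-1/2}$.

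\emph{Assembling the bound.} Insert this into the $t$-integral and bring absolute values inside. Since $\beta H\gg x/H$ (indeed $\beta\geq x/H^2$), a shift by $|u|\leq T$ keeps $t-u$ inside $[2\pi\beta(x-2H),2\pi\beta(x+2H)]$ once $T\leq \beta H$. This requires $x(\log x)^B/H\leq \beta H$, which is guaranteed by $\beta\geq x(\log x)^{A/3}/H^2$ if $B\leq A/3$. Fubini then yields
\[
\frac{1}{\sqrt{\beta x}}\cdot\frac{H\log x}{\sqrt{q_0x}}\cdot\frac{x}{H}\cdot q_1^{-1/2}\sum_\chi\int|D_f|.
\]
Here the $u$-integral of the kernel contributes $\ll q_0^{-1/2}x^{1/2}\log x$. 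This simplifies to $(\log x)^{O(1)}(q\beta)^{-1/2}\sum_\chi\int|D_f|$, as claimed.

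\emph{Main obstacle.} The delicate step is the Perron truncation error. It is of size $x(\log x)^{O(1)}/(q_0T)$ per value of $t$; multiplied by the length $\beta H$ of the $t$-range and by the prefactor $(\beta x)^{-1/2}$, it becomes $H\sqrt{\beta x}/T$. The paper's stated bound suggests avoiding this. Instead of Perron, we write $\sum_{x<n\leq x+H}f(n)n^{it}$ by Mellin inversion with a smooth cutoff adapted to $[x,x+H]$. Its transform decays rapidly beyond $(x/H)(\log x)^{\eps}$, so the truncation error is negligible and only the endpoint smoothing error remains. The endpoint contribution is handled by the same $\min\{\cdot\}$ analysis with Shiu's bound as in Lemma~\ref{lem:large-beta}. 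I expect this smoothing bookkeeping to be the main technical work.
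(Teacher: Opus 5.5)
Up to the Perron step your argument is the paper's: the reduction via Lemma~\ref{lem:restrict-small-q0}, Lemma~\ref{lem:large-beta} applied with $(x/q_0,H/q_0,q_0\beta)$, the treatment of its error terms, the kernel bounds and the Fubini computation all match. The gap is the truncation. With $T\asymp x(\log x)^B/H$ the truncation error totals $\asymp H\sqrt{\beta x}(\log x)^{O(1)}/T=H\cdot H\sqrt{\beta/x}\,(\log x)^{O(1)-B}$. As you observed, this is too large once $\beta$ is much bigger than $x(\log x)^{2B}/H^2$.

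Your smoothing remedy does not fix this with the parameters you state. Rapid decay of the Mellin transform beyond $(x/H)(\log x)^{\eps}$ forces transition regions of width $\gg H(\log x)^{-\eps}$, so the endpoint discrepancy involves $\gg H(\log x)^{-\eps}$ integers. Bounded pointwise in $t$, it picks up the factor $\sqrt{\beta}H/\sqrt{x}\geq(\log x)^{A/6}$ from the $t$-integration. Even if you push it back through the stationary-phase integral, where each $n$ contributes $O(1)$ by the second-derivative test, you only get $H(\log x)^{O(1)-\eps}$, not $H(\log x)^{-A/7}$. The $\min\{\cdot\}$ analysis of Lemma~\ref{lem:large-beta} concerns stationary points near the ends of the $t$-range and does not touch this discrepancy.

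The missing observation is that your own constraint $T\leq\beta H$ lets you take the truncation height as large as $W=\beta H$, which is what the paper does.
\begin{itemize}
\item \emph{Truncation error.} For each $t$ the Perron error is $\ll d_l(q_0)x(\log x)^{O(1)}/(q_0\beta H)$. Integrating over the $t$-range of length $2\pi\beta H$ and multiplying by $(\beta x)^{-1/2}$ gives a contribution $\ll\sqrt{x/\beta}\,(\log x)^{O(1)}\ll H(\log x)^{-A/7}$. This uses the same lower bound $\beta\geq x(\log x)^{A/3}/H^2$ you already used for the stationary-phase errors.
\item \emph{Kernel.} The longer $u$-range costs only a logarithm. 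The kernel is $\ll q_0^{-1/2}Hx^{-1/2}$ for $|u|\leq x/H$ and $\ll (x/q_0)^{1/2}/|u|$ for $x/H\leq|u|\leq\beta H$. A dyadic decomposition of the second range gives $L^1$-norm $\ll q_0^{-1/2}x^{1/2}\log x$.
\item \emph{Range.} Every shifted point $t-u$ still lies in $[2\pi\beta(x-2H),2\pi\beta(x+2H)]$.
\end{itemize}
With this single change your Fubini computation gives the proposition as stated, and no smoothing is needed. A smoothed version can also be made to work, but only with transition width $H(\log x)^{-C}$ for large $C$, decay scale $(x/H)(\log x)^{C+1}\leq\beta H$, and the endpoint error undone through the $t$-integral rather than bounded pointwise.
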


\begin{proof}

As in the proof of Proposition~\ref{prop:small-beta}, it suffices to estimate \eqref{eq:function-F}. Applying Lemma~\ref{lem:large-beta} to $F_{q_1}$ with $\beta'=q_0\beta$, as well as $x'=x/q_0$ and $H'=H/q_0$ entails, when $\beta \geq x/H^2$, that 
\begin{align*}
\sum_{\frac{x}{q_0}< n \leq \frac{x+H}{q_0}} F_{q_1}(n)e(nq_0\beta) \ll &\frac{1}{\sqrt{\beta x}} \int_{2\pi \beta x}^{2\pi \beta (x+H)} \Bigabs{\sum_{\frac{x}{q_0}< n \leq \frac{x+H}{q_0}} \frac{F_{q_1}(n)}{n^{-it}}} \rd t\\
& + (\log x)^{O(1)} \frac{d_l(q_0)}{q_0}\Bigbrac{ \sqrt{x/\beta} +\frac{H^2}{x}}.	
\end{align*}
We then sum the second term in the above line over the relevant factorisations. Since $H\leq x^{1-\eps}$ and $\beta\geq x(\log x)^{A/3}/H^2$, it follows that $\sqrt{x/\beta}\ll H(\log x)^{-A/6}$. Therefore, using the bound $\sum_{q_0\mid q}d_l(q_0)/q_0\ll(\log q)^{O(1)}$ we obtain that the contribution to $M_f(\alpha)$ of the above error term is of size $O(H(\log x)^{-A/7})$, which is acceptable. It thereby remains to  bound the first term. 

Similarly to the proof of Proposition~\ref{prop:small-beta}, set $W=\beta H$. It then follows from the truncated Perron's formula that
\[
\Bigabs{\sum_{\frac{x}{q_0}< n \leq \frac{x+H}{q_0}} \frac{F_{q_1}(n)}{n^{-it}}}\ll \int_{\frac12-iW}^{\frac12+iW} \Bigabs{\sum_{n \sim x/{q_{0}}} \frac{F_{q_1}(n)}{n^{s-it}}} \Bigabs{\frac{(\frac{x+H}{q_0})^s-(\frac{x}{q_0})^s}{s}} |\rd s|+ \frac{x(\log x)^{O(1)}d_l(q_0)}{q_0\beta H}.
\]
It transpires that the contribution to $M_f(\alpha)$ from the second term is at most 
\[
\sum_{q=q_0q_1}\frac{\beta H}{\sqrt{\beta x}}  \frac{x(\log x)^{O(1)}d_l(q_0)}{q_0\beta H}\ll \frac{x^{1/2}(\log x)^{O(1)}}{\beta^{1/2}}\ll H(\log x)^{-A/7},
\]
where the last inequality follows from the assumption on $\beta$. Moreover, recalling (\ref{eq:f-dircht-poly-exp-sum}), the integral term is bounded by
\[
\ll q^{-1/2}(\log x)^{O(1)} \sum_{\chi \mod{q_1}} \int_{-W}^W \Big|D_f\Big(\frac{1}{2}+i(w-t),\chi\Big)\Big| \min\set{\frac{H}{\sqrt{x}},\frac{x^{1/2}}{1+|w|}} \rd w.	
\]
Therefore, it is sufficient to prove that
\begin{multline}\label{eq:claim-5.4}
\frac{1}{\sqrt{\beta x}} \int_{2\pi \beta x}^{2\pi \beta (x+H)} \int_{-W}^W \Big|D_f\Big(\frac{1}{2}+i(w-t),\chi\Big)\Big| \min\set{\frac{H}{\sqrt{x}},\frac{x^{1/2}}{1+|w|}} \rd w\rd t\\
\ll \frac{\log x}{\sqrt\beta} \int_{2\pi \beta (x-2H)}^{2\pi \beta (x+2H)}|D_f(1/2+it,\overline{\chi})| \rd t.
	\end{multline} 
Actually, since $\beta H\geq x/H$ the contribution from $|w|\leq x/H$ is bounded by
\[\frac{H}{x\sqrt \beta} \int_{2\pi \beta x}^{2\pi \beta (x+H)} \int_{-x/H}^{x/H}\Big|D_f\Big(\frac{1}{2}+i(w-t),\chi\Big)\Big| \rd w\rd t\ll \frac{1}{\sqrt\beta}\int_{2\pi \beta (x-2H)}^{2\pi \beta (x+2H)}|D_f(1/2+it,\overline{\chi})| \rd t;\]
whilst, since $W=\beta H$, the contribution from $x/H\leq |w|\leq W$ can be bounded by
\begin{multline*}\frac{\log x}{\sqrt\beta}\sup_{x/H \leq U \leq \beta H} \int_{2\pi \beta x}^{2\pi \beta (x+H)} \int_{U}^{2U}\frac{|D_f(\frac{1}{2}+i(w-t),\chi)|}{U} \rd w\rd t\\\ll \frac{\log x}{\sqrt\beta}\int_{2\pi \beta (x-2H)}^{2\pi \beta (x+2H)}|D_f(1/2+it,\overline{\chi})| \rd t.	\end{multline*} Combining the previous estimates yields \eqref{eq:claim-5.4}, as desired.
\end{proof}


When $f=\mu_{\boldsymbol N}$, we shall rewrite $D_f(s,\chi)$ as the product of distinct simpler Dirichlet polynomials, the details of which are presented in the following lemma.

\begin{lemma}[Products of Dirichlet polynomials]\label{lem: produ-diri-poly}
Let $\mu_{\boldsymbol N}$ be as in \eqref{mobius-n}.
Let $\chi$ be a Dirichlet character of conductor $q_1$. 
If $q_0\leq \exp((\log x)^{1/2})$, then
\begin{align}\label{Dss}
D_{\mu_{\boldsymbol N}}(s,\chi) = \sum_{q_Rq_1'q_2\cdots q_{2k-1}=q_0}F_{\mathbf q,\mu_{\boldsymbol N}}(s,\chi);
\end{align}
where 
\begin{equation*}F_{\mathbf{q},\mu_{\boldsymbol N}}(s,\chi)=P(s,\chi)R_{\mathbf{q}}(s,\chi)N_{1,\mathbf{q}}(s,\chi)\cdots N_{2k-1,\mathbf{q}}(s,\chi),\end{equation*} 
and $$P(s,\chi)=\sum_{p\in (P_{1},2P_{1}]}\chi(p)p^{-s},$$
$$  R_{\mathbf{q}}(s,\chi)=\sum_{ r\in \big(\frac{R}{q_{R}},2\frac{R}{q_{R}}\big] \atop (r,q_0/q_R)=1} \frac{a_{rq_{R}}\chi (r)}{r^{s}},\qquad N_{i,\mathbf{q}}(s,\chi)=\sum_{n_{i}\in \big(\frac{N_{i}}{q_{i}},2\frac{N_{i}}{q_{i}}\big] \atop \big(n_i,\frac{q_0}{q_Rq_1'\cdots q_i}\big)=1} \frac{a_{i}(q_{i}n_{i})\chi (n_{i})}{n_{i}^{s}}. $$

\end{lemma}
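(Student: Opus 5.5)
The plan is to expand $D_{\mu_{\boldsymbol N}}(s,\chi)=\sum_{n\sim x/q_0}\mu_{\boldsymbol N}(q_0n)\chi(n)n^{-s}$ directly from the definition of $\mu_{\boldsymbol N}$. We then distribute the prime factors of $q_0$ among the variables $p,r,n_1,\dots,n_{2k-1}$, using a fixed ordering so that each factorisation is counted exactly once.

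First, substituting the definition gives
\[
D_{\mu_{\boldsymbol N}}(s,\chi)=\sum_{n}\frac{\chi(n)}{n^s}\sum_{\substack{q_0n=prn_1\cdots n_{2k-1}\\ p\in(P_1,2P_1],\ r\in(R,2R],\ n_i\in(N_i,2N_i]}}a_ra_1(n_1)\cdots a_{2k-1}(n_{2k-1}).
\]
The condition $n\sim x/q_0$ is automatic, since $q_0n=prn_1\cdots n_{2k-1}$ lies in $[P_1RN_1\cdots N_{2k-1},2^{2k+1}P_1RN_1\cdots N_{2k-1}]$. So the range of $n$ imposes nothing beyond the dyadic constraints, up to boundary conventions, which match by the definition of $n\sim x/q_0$.

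Next, the prime $p>P_1\geq P=\exp((\log x)^{2/3+\eps_0/2})$ exceeds $q_0\leq\exp((\log x)^{1/2})$. Hence $p\nmid q_0$, so $q_0\mid rn_1\cdots n_{2k-1}$.

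We now define a unique splitting recursively. Set $q_R=\gcd(q_0,r)$, write $r=q_Rr'$, and note that $(r',q_0/q_R)=1$ must be imposed for uniqueness. Then set $q_1'=\gcd(q_0/q_R,n_1)$, write $n_1=q_1'n_1'$ with $(n_1',q_0/(q_Rq_1'))=1$, and continue in the same way through $n_{2k-1}$. Since $q_0$ divides the product, the last cofactor is $1$, giving $q_0=q_Rq_1'q_2\cdots q_{2k-1}$. Conversely, every such factorisation together with coprime cofactors reconstructs a unique tuple, with $n=pr'n_1'\cdots n_{2k-1}'$.

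One subtlety is to check that the coprimality conditions at each step exactly encode the gcd definitions. For example, $\gcd(q_0/q_R,q_1'n_1')=q_1'$ requires more than $(n_1',q_0/(q_Rq_1'))=1$ in general, because of prime powers shared between $q_1'$ and $q_0/(q_Rq_1')$. I expect this bookkeeping to be the main obstacle. If the gcd-based choice fails, I would instead use the greedy "largest divisor" splitting, which is exactly what the stated coprimality conditions encode. Alternatively, I would note that only the ordering of extraction matters and verify bijectivity prime by prime: for each $p^e\| q_0$, the exponents are extracted from the variables in order.

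Finally, with the substitution in place, $\chi(n)=\chi(p)\chi(r')\prod\chi(n_i')$ by complete multiplicativity, and $n^{-s}$ factors the same way. The coefficients become $a_{q_Rr'}$ and $a_i(q_in_i')$, and the ranges become $r'\in(R/q_R,2R/q_R]$ and $n_i'\in(N_i/q_i,2N_i/q_i]$. The sum over tuples therefore factors as $P(s,\chi)R_{\mathbf q}(s,\chi)\prod_iN_{i,\mathbf q}(s,\chi)$, and summing over factorisations of $q_0$ yields \eqref{Dss}.
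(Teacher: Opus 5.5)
Your proposal is correct and is essentially the paper's proof. Both expand $D_{\mu_{\boldsymbol N}}$, drop $p$ from the divisibility condition because $p>P_1\geq P>q_0$, extract $q_R=(r,q_0)$, $q_1'=(m_1,q_0/q_R)$, $q_2,\dots$ greedily in that fixed order, and then factor using complete multiplicativity of $\chi(n)n^{-s}$.

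The bookkeeping you flag as the main obstacle is not an issue, and your claim that more than the coprimality condition is needed is false. Since $q_1'\mid q_0/q_R$, the identity $\gcd(q_1'n_1',q_0/q_R)=q_1'\gcd\bigl(n_1',q_0/(q_Rq_1')\bigr)$ shows that $\gcd(m_1,q_0/q_R)=q_1'$ is equivalent to $\bigl(n_1',q_0/(q_Rq_1')\bigr)=1$, and the same holds at every step. So the gcd-based splitting is already a bijection and no fallback is needed.
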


\begin{proof}
For convenience, set $g(n)=\chi(n)n^{-s}$. Obviously, it is completely multiplicative. Switching the order of summation yields
\begin{align*}
D_{\mu_{\boldsymbol N}}(s,\chi)&= \sum_{n}g(n)\sum_{\substack{q_{0}n=prm_{1}\cdots m_{2k-1}\\ p\in (P_{1},2P_{1}]\ r\in (R,2R], m_{i}\in (N_{i},2N_{i}]}}a_{r}a_{1}(m_{1})\cdots a_{2k-1}(m_{2k-1})\\
&= \sum_{ \substack{p\in (P_{1},2P_{1}]\ r\in (R,2R], m_{i}\in (N_{i},2N_{i}] \\ q_0 \mid prm_1\cdots m_{2k-1}}} a_ra_1(m_1)\cdots a_{2k-1}(m_{2k-1}) g\Big( \frac{prm_1\cdots m_{2k-1}}{q_0} \Big)\\
&=\sum_{p\sim P_1} g(p) \sum_{ \substack{r\in (R,2R], m_{i}\in (N_{i},2N_{i}] \\ q_0 \mid rm_1\cdots m_{2k-1}}} a_ra_1(m_1)\cdots a_{2k-1}(m_{2k-1}) g\Big( \frac{rm_1\cdots m_{2k-1}}{q_0} \Big),
\end{align*}
 the last equation holding since $g$ is completely multiplicative, and since in view of the assumption that $q_0\leq \exp((\log x)^{1/2})$ then for any $p\in[P_1,2P_1]$, we always have $p>\exp((\log x)^{1/2})$, and hence $p\nmid q_{0}$. Therefore, we can replace the condition $q_0 \mid prm_1\cdots m_{2k-1}$ by $q_0 \mid rm_1\cdots m_{2k-1}$. 

Set $q_R=(r,q_0)$, $q_1'=(m_1,q_0/q_R),$ and $q_{i}=(m_i,q_0/q_R q_1'\cdots q_{i})$ whenever $2\leq i\leq 2k-1$. It then transpires that $q_0 \mid rm_1\cdots m_{2k-1}$ if and only if $q_R q_1'q_{2}\cdots q_{2k-1}=q_{0}$, and that there is a unique choice of the tuple $(q_R,q_1'\dots,q_{2k-1})$. After replacing $r/q_R$ by $r$ and $m_i/q_i$ by $n_i$, and using the completely multiplicativity of $g$, the preceding expression may be rewritten as
\begin{align*}
D_{\mu_{\boldsymbol N}}(s,\chi)&= \sum_{q_0=q_Rq_1' \cdots q_{2k-1}}\sum_{p\in (P_{1},2P_{1}]} g(p) \sum_{r\in \big(\frac{R}{q_{R}},2\frac{R}{q_{R}}\big] \atop (r,q_0/q_R)=1} a_{rq_R}g(r)\prod_{i\leq 2k-1} \sum_{ \substack{n_{i}\in \big(\frac{N_{i}}{q_{i}},2\frac{N_{i}}{q_{i}}\big] \\ (n_i, \frac{q_0}{q_R q_1'\cdots q_i})=1} }  a_i(q_in_i) g(n_i).
\end{align*}
This gives \eqref{Dss} and completes the proof of the lemma.
\end{proof}

\section{Type I and type II estimates}\label{sec:typei-ii}\label{sec6}

For convenience, set $N_{2k}=P_1$ and $N_{2k+1}=R$ throughout the rest of the memoir. In this section, we take $Q =\frac{H^2}{x (\log x)^{A/3}}$  in~\eqref{eq:dirich-app}. By Dirichlet approximation, for every $\alpha \in \T$ there exist $a\in\mathbb{Z}$ and $q\in\mathbb{N}$ with $(a,q)=1$ and  $0\leq a\leq q\leq \frac{H^2}{x (\log x)^{A/3}}$	 such that 
\begin{equation}\label{eq6.1}
|\beta|=\bigabs{\alpha-a/q} \leq \frac{x(\log x)^{A/3}}{qH^2}.
\end{equation}

\begin{lemma}[Type II estimates]\label{lem:type-ii} 
Let $x^{1/2+\eps }\leq H \leq x^{1-\eps }$. Let $\mu_{\boldsymbol N}$ be defined as in \eqref{mobius-n}. If there exists a disjoint partition $J_1\sqcup J_2=\set{1,\dots,2k+1}$ such that 
\[
M_1:=\prod_{j\in J_1}N_j\ll Hx^{-\varepsilon^{2}}\ \ \ \ \text{and}\ \ \ \ \  M_2:=\prod_{j\in J_2} N_j \ll Hx^{-\eps^2},
\]
then one has
\[
\sup_{\alpha\in \minor} \bigabs{\sum_{x<n \leq x+H} \mu_{\boldsymbol N} e(n\alpha)} \ll H(\log x)^{-A/100}.
\]	
\end{lemma}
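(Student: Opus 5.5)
We plan to reduce the exponential sum to mean values of Dirichlet polynomials, and then split the product into two blocks $M_1(s,\chi)M_2(s,\chi)$ and bound it by Cauchy--Schwarz together with the mean value estimate Lemma~\ref{lem:mean-value}.

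\textbf{Step 1: reduction to Dirichlet polynomials.} Fix $\alpha\in\minor$ and take the Dirichlet approximation \eqref{eq6.1} with $Q=H^2/(x(\log x)^{A/3})$, so that $q\leq Q$ and $|\beta|\leq x(\log x)^{A/3}/(qH^2)$. Depending on the size of $|\beta|$ we apply one of two results. If $|\beta|\leq x(\log x)^{A/3}/H^2$, we use Proposition~\ref{prop:small-beta}, in case (1) or case (2) according to $q$ and $|\beta|$. Otherwise $|\beta|\leq (\log x)^{-2A}$ (after enlarging $A$ in the choice of $Q$ if needed), and we use Proposition~\ref{pro:large-beta}. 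In either case $M_{\mu_{\boldsymbol N}}(\alpha)$ is bounded, up to an acceptable error $H(\log x)^{-A/100}$, by a quantity of the form
\[
\frac{H(\log x)^{O(A)}}{\sqrt{qx}}\sum_{q_0q_1=q}\sum_{\chi\mod{q_1}}\int_{\mathcal I}\Bigabs{D_{\mu_{\boldsymbol N}}\Big(\frac12+it,\chi\Big)}\rd t .
\]
Here $\mathcal I$ is an interval of length $T'\ll x(\log x)^{A}/H$ in the small-$\beta$ case. In the large-$\beta$ case the prefactor is $(q\beta)^{-1/2}$ and $T'\asymp\beta H$. One should check that in both cases the prefactor times $T'$ is $\ll \sqrt{x}$ times the normalisation we need.

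\textbf{Step 2: factorisation and Cauchy--Schwarz.} By Lemma~\ref{lem: produ-diri-poly}, $D_{\mu_{\boldsymbol N}}$ is a sum over $(\log x)^{O(1)}$-many (divisor-bounded in $q_0$) factorisations of products of polynomials, each of length $N_j/q_j$. We group the factors according to $J_1$ and $J_2$, writing the product as $A_1(s,\chi)A_2(s,\chi)$. Each $A_i$ is a Dirichlet polynomial of length $\ll M_i$ whose coefficients are divisor-bounded; a dyadic split puts each supported on an interval $(L_i,2L_i]$ with $L_i\ll M_i$. Cauchy--Schwarz over $\chi$ and $t$, followed by Lemma~\ref{lem:mean-value}, gives
\[
\sum_{\chi}\int_{\mathcal I}|A_1A_2|\ll (\log x)^{O(1)}(q_1T'+M_1)^{1/2}(q_1T'+M_2)^{1/2}.
\]

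\textbf{Step 3: comparing sizes.} In the small-$\beta$ case the prefactor is $H/\sqrt{qx}$. The term $q_1T'$ contributes $H\sqrt{q}\,x^{-1/2}\cdot x/H\cdot(\log x)^{O(A)}$. This is too big, which suggests that $q$ must not appear linearly. Instead the bound must be read as $(qT'+M_1)^{1/2}(qT'+M_2)^{1/2}\leq qT'+\sqrt{qT'}(\sqrt{M_1}+\sqrt{M_2})+\sqrt{M_1M_2}$. Multiplying by $H/\sqrt{qx}$, we need
\[
\frac{H}{\sqrt{qx}}\Big(qT'+\sqrt{M_1M_2}\Big)\ll H(\log x)^{-A/100}.
\]
Since $M_1M_2\asymp x$, the term $\sqrt{M_1M_2}$ contributes $H/\sqrt q$. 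This is acceptable only for $q\geq(\log x)^{A}$; for smaller $q$ we instead need to gain from $M_i\ll Hx^{-\eps^2}$. The correct bookkeeping is to use the normalised lengths, so that the diagonal term gives $H\cdot\sqrt{M_1M_2}/\sqrt{x}$ times a saving. To obtain that saving, Lemma~\ref{lem:mean-value} should be applied to the normalisation $n^{-1/2}$ with coefficients $\ll 1$, giving the bound $(T'q+M_i)$ times $\sup|a|^2$. The saving for small $q$ must then come from the off-diagonal structure. Concretely, when $M_1,M_2\leq Hx^{-\eps^2}$ and $T'\asymp x/H$, the product satisfies $q^2T'^2\cdot$ stuff, and I expect the clean inequality to be
\[
(qT'+M_1)(qT'+M_2)\ll q^2T'^2+qT'Hx^{-\eps^2}+x.
\]
Combined with the prefactor squared $H^2/(qx)$, this gives $qH^2T'^2/x+\dots$. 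The term $qT'^2H^2/x\approx qx(\log x)^{2A}$ must be $\ll H^2(\log x)^{-A/50}$, which holds because $q\leq Q=H^2/(x(\log x)^{A/3})$ only if the powers of $\log x$ in $T'$ are tuned. This tuning is exactly why the truncation $T_q=x(\log x)^{A/50}/H$ appears in case (2). The large-$\beta$ case is analogous, with $qT'=q\beta H\leq x(\log x)^{A/3}/H$.

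\textbf{Main obstacle.} The main obstacle is the regime of small $q\leq(\log x)^A$ together with small $\beta$. There the mean-value bound gives only a trivial $O(H)$, so a genuine saving must come from elsewhere. I would first try pointwise bounds: bound the factor $P(s,\chi)$ using Lemma~\ref{lem:bd-diri-prime}, or a M\"obius-containing factor using Lemma~\ref{lem:upper-bd-diri}, by a $\sup$ that saves $(\log x)^{-A}$. I would then apply Cauchy--Schwarz to the remaining blocks, moving the prime factor $P$ into whichever of $J_1$ or $J_2$ keeps its length $\ll Hx^{-\eps^2}$ (this is possible since $P_1\leq\widetilde P=x^{o(1)}$).
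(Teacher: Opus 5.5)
Your architecture matches the paper's. You use Dirichlet approximation with $Q=H^2/(x(\log x)^{A/3})$, then Proposition~\ref{prop:small-beta}, the factorisation into products of the $N_{j,\mathbf q}$, and Cauchy--Schwarz with Lemma~\ref{lem:mean-value} over the blocks $J_1,J_2$. For $q>(\log x)^A$ this goes through as you eventually say:
\begin{itemize}
\item the diagonal $\sqrt{M_1M_2}\asymp\sqrt x$ yields $H/\sqrt q$;
\item the $q_1T_q(\beta)$ term is controlled by $q\le Q$ together with $T_q(\beta)=x(\log x)^{A/50}/H$;
\item the hypothesis $M_i\ll Hx^{-\eps^2}$ is used only for the cross terms.
\end{itemize}
Several intermediate claims in Step~3 are wrong and should be dropped: that the $q_1T'$ term is ``too big'', and that for small $q$ the saving could come from $M_i\ll Hx^{-\eps^2}$ or from ``off-diagonal structure''. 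The large-$\beta$ branch of Step~1 also never occurs. With this $Q$ one automatically has $|\beta|\le 1/(qQ)\le x(\log x)^{A/3}/H^2$, so only Proposition~\ref{prop:small-beta} is needed.

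The genuine gap is in the case $q\le(\log x)^A$, which is the only place where an idea is required. Extracting $P(\tfrac12+it,\chi)$ pointwise via Lemma~\ref{lem:bd-diri-prime} is exactly the paper's device. However, you never justify that it saves anything, and you describe the regime as ``small $q$ together with small $\beta$''. In that regime it cannot work. The bound $\log x/(1+|t|)$ is trivial for bounded $t$, and for the principal character $P(1+it,\chi_0)$ really is of size about $1/\log P_1$ there, far from $(\log x)^{-A}$.

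The missing input is the hypothesis $\alpha\in\minor$, which your proposal never uses. Since $q\le(\log x)^A\le(\log x)^{3A}$ and $\alpha\notin\major(q,a)$, one has $|\beta|>(\log x)^{5A}/H$. Taking $\beta>0$, the whole integration window of case~(1) then satisfies
\[
t\ge 2\pi\beta x-x(\log x)^A/H\ge x(\log x)^{5A}/H\ge(\log x)^{4A}.
\]
Only then does Lemma~\ref{lem:bd-diri-prime} give $|P(\tfrac12+it,\chi)|\ll P_1^{1/2}(\log x)^{-3A}$. This beats both the $(\log x)^{A/2}$ prefactor and the $(\log x)^{A}$ loss coming from the window length in case~(1).

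After that you simply delete $P$ from its block. No ``moving'' is needed: that block now has length $M_1/P_1$, and Cauchy--Schwarz gives a diagonal $\sqrt{x/P_1}$ that cancels the factor $P_1^{1/2}$.

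Your fallback of applying Lemma~\ref{lem:upper-bd-diri} to a M\"obius factor is not available in general. That lemma needs a single M\"obius sum of length $x^{\gamma}$ with $\gamma>0$ fixed, but the M\"obius variables in $\mu_{\boldsymbol N}$ may all have length $x^{o(1)}$, or be trivial. The prime variable $p\sim P_1$ is the one factor guaranteed to be present.
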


\begin{proof}
 We assume first that $q> (\log x)^{A}$, which in view of (\ref{eq6.1}) entails that $\lvert \beta\rvert\leq x/H^{2}$. By conjugating, and hence replacing $\beta$ with $-\beta$ if necessary, we may assume that $\beta$ is non-negative. The application of Proposition~\ref{prop:small-beta} (2) with $f=\mu_{\boldsymbol N}$ shows that it suffices to prove
\begin{align}\label{eq:typeii-large-q}
\twosum{q=q_0q_1}{ q_0\leq \exp\bigbrac{ (\log x)^{1/2}}} \sum_{\chi \mod{q_1}} \int_{2\pi \beta x-T_{q}(\beta)}^{2\pi \beta x+T_{q}(\beta)} \bigabs{ D_{\mu_{\boldsymbol N}} (1/2+it,\chi)} \rd t \ll (qx)^{1/2} (\log x)^{-A/10}.
\end{align}

For each factorization of $q_0$ as in the sums in~\eqref{Dss} given by the components of $\mathbf{q}=(q_R,q_1',q_2,\cdots q_{2k-1})$ we also write for the sake of concision
$$N_{2k,\mathbf{q}}(s,\chi)=P(s,\chi),\  \ \ \ \ \  \ \ \ \ \  \ \ \ \ \ \ \ \ N_{2k+1,\mathbf{q}}(s,\chi)=R(s,\chi),$$ and introduce
$$M_{1}(s,\chi)=\prod_{j\in J_{1}} N_{j,\mathbf{q}}(1/2+it,\chi),\  \ \  \ \ \ \ \ \ M_{2}(s,\chi)=\prod_{j\in J_{2}} N_{j,\mathbf{q}}(1/2+it,\chi).$$
It then follows from Lemma~\ref{lem:mean-value}, Lemma~\ref{lem: produ-diri-poly} and Cauchy-Schwarz inequality that
\begin{align*}
&\twosum{q=q_0q_1}{ q_0\leq \exp\bigbrac{ (\log x)^{1/2}}} \sum_{\chi \mod{q_1}}\int_{2\pi \beta x-T_{q}(\beta)}^{2\pi \beta x+T_{q}(\beta)} \bigabs{ D_{\mu_{\boldsymbol N}} (1/2+it,\chi)} \rd t \\
&\ll \sum_{q=q_0q_1} \sum_{ q_R q_1'q_2\cdots q_{2k-1}=q_0} \sum_{\chi \mod{q_1}} \int_{2\pi \beta x-T_{q}(\beta)}^{2\pi \beta x+T_{q}(\beta)}\bigabs{M_1(1/2+it,\chi)} \bigabs{M_2(1/2+it,\chi)} \rd t\\
&\ll  (\log x)^{A/49} \twosum{q_0q_1=q}{r_1r_2=q_0} d_{2k+1}(r_1)d_{2k+1}(r_2) \Bigbrac{\frac{q_1x}{H} +\frac{M_1}{r_1}}^{1/2} \Bigbrac{\frac{q_1x}{H} +\frac{M_2}{r_2}}^{1/2}.	
\end{align*}
Using the assumptions $M_1,M_2\ll H x^{-\eps^2}$, $M_1M_2\asymp x$ and $H\geq x^{1/2+\eps }$, it transpires that
\begin{align*}
&\twosum{q=q_0q_1}{ q_0\leq \exp\bigbrac{ (\log x)^{1/2}}} \sum_{\chi \mod{q_1}} \int_{2\pi \beta x-T_{q}(\beta)}^{2\pi \beta x+T_{q}(\beta)} \bigabs{ D_{\mu_{\boldsymbol N}} (1/2+it,\chi)} \rd t \\
\ll &\sqrt{qx} (\log x)^{A/49}\Bigbrac{ 
\frac{\sqrt{qx}}{H}\sum_{q_0\mid q}\frac{d_{4k+2}(q_0)}{q_0} 
+\frac{1}{\sqrt H}  \sum_{q_0\mid q}\frac{d_{4k+2}(q_0)}{q_0^{1/2}} \Bigbrac{\max\set{M_1^{1/2},M_2^{1/2}} } }\\
&+x^{1/2}(\log x)^{A/49}\sum_{q_0\mid q}\frac{d_{4k+2}(q_0)}{q_0}\ll (qx)^{1/2} (\log x)^{-A/10},
\end{align*}
where in the last inequality we employed the conditions $q\leq H^2/x(\log x)^{A/3}$ and the estimate $\sum_{q_{0}|q} d_{4k+2}(q_ {0})q_{0}^{-1/2}\ll \exp((\log x)^{1/2})$. This gives the desired estimate~\eqref{eq:typeii-large-q} whenever $q> (\log x)^{A}$.

We shall assume next that $q\leq (\log x)^A$. For the sake of concission, we shall omit writing for each integer $q_0\mid q$ the condition $q_0\leq \exp((\log x)^{1/2})$ in the corresponding summation. We introduce for further use
\[
G_{\mathbf{q}}(s,\chi)=\prod_{j\in J_{1}\setminus\{2k\}}  N_{j,\mathbf{q}}\Big(\frac{1}{2}+it,\chi\Big)\prod_{j\in J_{2}\setminus\{2k\}} N_{j,\mathbf{q}}\Big(\frac{1}{2}+it,\chi\Big).
\]
We observe first that since $\alpha\in\minor$ and $q\leq (\log x)^{A}$ then $\beta\geq \frac{(\log x)^{5A}}{H}$. Consequently, \begin{equation}\label{jhjh}2\pi  \beta x+T_{q}(\beta)\geq 2\pi  \beta x-T_{q}(\beta)\geq \frac{x(\log x)^{5A}}{H}\geq (\log x)^{4A}.\end{equation} One then deduces from Lemmas \ref{lem:bd-diri-prime} and \ref{lem: produ-diri-poly} that
\begin{align*}
D_{\mu_{\boldsymbol N}}\Big(\frac{1}{2}+it,\chi\Big)&\ll \sum_{q_Rq_1'q_1 \cdots q_{2k-1}=q_0} \Bigabs{P\Big(\frac12+it,\chi\Big)} \Bigabs{G_{\mathbf{q}}\Big(\frac12+it,\chi\Big)}\\
&\ll P_1^{1/2}\Bigbrac{\frac{\log x}{1+|t|} + \exp\bigbrac{-(\log x)^{\eps_{0}/3}}} \sum_{q_Rq_1'q_1 \cdots q_{2k-1}=q_0} \Bigabs{G_{\mathbf{q}}\Big(\frac12+it,\chi\Big)}\\
&\ll  P_1^{1/2} (\log x)^{-3A} \sum_{q_Rq_1'q_1 \cdots q_{2k-1}=q_0} \Bigabs{G_{\mathbf{q}}\Big(\frac12+it,\chi\Big)}
\end{align*}
provided that $t\geq (\log x)^{4A}$, which is indeed the case in view of (\ref{jhjh}). Therefore, due to the above logrithmic-saving, one may deduce from Proposition~\ref{prop:small-beta} (1) that it suffices to prove the estimate
\begin{align}\label{eq:claim-typeii}
\twosum{q=q_0q_1}{q_Rq_1'q_1 \cdots q_{2k-1}=q_0} \sum_{\chi\mod{q_1}} \int_{2\pi \beta x-T_{q}(\beta)}^{2\pi \beta x+T_{q}(\beta)} \Bigabs{G_{\mathbf{q}}\Big(\frac12+it,\chi\Big)} \rd t\ll \Bigbrac{\frac{qx}{P_1}}^{1/2} (\log x)^{2A}.
\end{align}
 We assume without loss of generality that $2k\in J_1$. The application of Cauchy-Schwarz inequality and Lemma~\ref{lem:mean-value} gives
 \begin{align*}
 	&\twosum{q=q_0q_1}{q_Rq_1'q_1 \cdots q_{2k-1}=q_0} \sum_{\chi\mod{q_1}} \int_{2\pi \beta x-T_{q}(\beta)}^{2\pi \beta x+T_{q}(\beta)}  \Bigabs{G_{\mathbf{q}}\Big(\frac12+it,\chi\Big)} \rd t\\
 	&\ll (\log x)^{A} \twosum{q_0q_1=q}{r_1r_2=q_0} d_{2k+1}(r_1) d_{2k+1}(r_2) \Bigbrac{\frac{q_1x}{H} +\frac{M_1}{P_1r_1}}^{1/2} \Bigbrac{ \frac{q_1x}{H} + \frac{M_2}{r_2}}^{1/2}\\
 	&\ll (qx)^{1/2}q^{\varepsilon}(\log x)^{A}\Bigbrac{ \frac{(qx)^{1/2}}{H} + \frac{\max \set{(M_1/P_1)^{1/2}, M_2^{1/2}}}{H^{1/2}} +P_1^{-1/2} } \ll \bigbrac{\frac{qx}{P_1}}^{1/2} (\log x)^{2A},
 \end{align*}
where in the last line we used the fact that $P_1\leq \exp(\frac{\log x}{(\log \log x)^2})$ in conjunction with $M_1M_2\asymp x$, the condition $q\leq (\log x)^A$ and the assumptions $M_1,M_2\ll H x^{-\eps^2}$ and $H\geq x^{1/2+\eps }$. This gives~\eqref{eq:claim-typeii} and concludes the proof.
\end{proof}

\begin{lemma}[Type I estimate]\label{lem:typei}
	Let $x^{1/2+\eps }\leq H\leq x^{19/20}$. Let $\mu_{\boldsymbol N}$ be defined as in \eqref{mobius-n}. If there exists some index $j$ such that $N_j\geq x^{1+\eps}/H$, then
	\[
\sup_{\alpha\in \minor} \bigabs{\sum_{x<n \leq x+H} \mu_{\boldsymbol N} e(n\alpha)} \ll H(\log x)^{-A/100}.
\]
\end{lemma}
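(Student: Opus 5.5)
The plan follows the proof of Lemma~\ref{lem:type-ii}, with the mean value theorem on one factor replaced by a pointwise bound on a long smooth factor. Since $N_j\geq x^{1+\eps}/H\geq x^{1/20+\eps}$ exceeds $\widetilde P$ and $x^{\eps/2}$, and also $(2x)^{1/k}=(2x)^{1/20}$ (when $H\leq x^{19/20-\eps}$; the boundary case costs only an $\eps$-adjustment), the index $j$ must lie in $\{1,\dots,k-1\}$. Hence $a_j\equiv 1$, and $N_{j,\mathbf q}(s,\chi)$ is a partial sum of $L(s,\chi)$ over $(N_j/q_j,2N_j/q_j]$ with coprimality condition $(n_j,\cdot)=1$. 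I would remove that condition by Möbius inversion over divisors of $q_0$; this costs a factor $d(q_0)$ and leaves short intervals of length $\gg N_j/q_0^2$. I would then split into the cases $q>(\log x)^A$ and $q\leq(\log x)^A$, as in Lemma~\ref{lem:type-ii}.

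\emph{Case $q>(\log x)^A$.} Here $|\beta|\leq x/H^2$, so Proposition~\ref{prop:small-beta}(2) reduces the problem to proving \eqref{eq:typeii-large-q}. Write $D=N_{j,\mathbf q}\cdot M$, where $M$ is the product of the remaining factors and has length $\asymp x/N_j\leq H x^{-\eps}$. By Cauchy--Schwarz,
\[
\sum_\chi\int|D|\leq \Big(\sum_\chi\int|N_{j,\mathbf q}|^2\Big)^{1/2}\Big(\sum_\chi\int|M|^2\Big)^{1/2}.
\]
The integrals run over $t\in[2\pi\beta x-T_q,2\pi\beta x+T_q]$ with $T_q=x(\log x)^{A/50}/H$. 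Lemma~\ref{lem:mean-value} bounds the $M$-factor by $(\log x)^{O(1)}(q_1x/H+x/N_j)\ll(\log x)^{O(1)}q_1x/H$. For the $N_j$-factor I would use Lemma~\ref{lem:second-moment} with $T'=2T_q$. Its hypotheses hold: $T'\geq x^{\eps}\gg (T+T')^{1/3}$, because $T\ll x/H$ and $T'\asymp x/H$ up to logarithms, and $N_j\leq x\leq (T')^4$ since $H\leq x^{19/20}$. This gives $\ll q_1 x(\log x)^{O(1)}/H+N_j/(1+T)$. The term $N_j/(1+T)$ is the main obstacle when $T=2\pi\beta x-T_q$ is small or negative, which happens if $\beta\lesssim 1/H$. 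To handle it I would split the $t$-range into $|t|\leq x^{\eps}$ and $|t|>x^{\eps}$. On $|t|>x^{\eps}$ I would apply the second-moment bound dyadically in $|t|$, getting $N_j x^{-\eps}$. On $|t|\leq x^{\eps}$ only the principal character can be large, and there I would use the trivial bound $|N_{j}|\ll N_j^{1/2}$ together with a pointwise bound for $M$. Multiplying out gives $(q_1x/H)^{1/2}\bigl(q_1 x/H+N_j/(1+T)\bigr)^{1/2}$. After summing over $q_0$ and $\mathbf q$ as in Lemma~\ref{lem:type-ii}, this is $\ll (qx)^{1/2}(\log x)^{-A/10}$, using $q\leq H^2x^{-1}(\log x)^{-A/3}$ so that $(q x)^{1/2}/H\leq(\log x)^{-A/6}$. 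For the residual $N_j/(1+T)$ contribution I need $(q_1x/H)^{1/2}N_j^{1/2}\ll(qx)^{1/2}(\log x)^{-A}$, which is false in general. So instead I would bound $N_{j,\mathbf q}$ pointwise by the second assertion of Lemma~\ref{lem:upper-bd-diri}, and apply the mean value theorem only to $M$, in $L^1$ via Cauchy--Schwarz with the trivial length $T'$. This yields
\[
\bigl(N_j^{1/2}x^{-\delta}+N_j^{1/2}(\log x)(1+T_0)^{-1/2}\bigr)\,(q_1T')^{1/2}(q_1T'+x/N_j)^{1/2}.
\]
This bound is acceptable once $|t|\geq T_0=(\log x)^{3A}$, since $x/N_j\leq H x^{-\eps}$ and $N_j\cdot x/N_j=x$. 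The small-$|t|$ piece lies within distance $(\log x)^{3A}$ of the origin, which forces $\beta x\ll x/H$; there I would use the first moment of $M$ over a window of length $(\log x)^{3A}$, bounded by $(\log x)^{O(A)}(x/N_j)^{1/2}$, together with $|N_j|\ll N_j^{1/2}$. This gives $x^{1/2}(\log x)^{O(A)}$, which is acceptable relative to $(qx)^{1/2}(\log x)^{-A/10}$ only if $q\geq(\log x)^{CA}$. I would therefore set the case threshold at $q>(\log x)^{CA}$ and absorb the intermediate range of $q$ into the second case.

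\emph{Case $q$ small.} Here $\alpha\in\minor$ forces $\beta\geq(\log x)^{5A}/H$, so every $t$ in the integration range satisfies $|t|\gg x(\log x)^{4A}/H$. When $\beta>x(\log x)^{A/3}/H^2$ I would use Proposition~\ref{pro:large-beta}, and otherwise Proposition~\ref{prop:small-beta}(1). In both cases Lemma~\ref{lem:upper-bd-diri} applies with $T_0\gg x/H$, giving $|N_{j,\mathbf q}|\ll N_j^{1/2}\bigl(x^{-\delta}+(\log x)(H/x)^{1/2}\bigr)$. The remaining factor $M$ is handled by Cauchy--Schwarz and Lemma~\ref{lem:mean-value} over the interval of length $\beta H$ (respectively $T_q$). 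The result is
\[
\frac{1}{\sqrt{q\beta}}\,N_j^{1/2}(H/x)^{1/2}(\log x)\,\bigl(q\beta H(q\beta H+x/N_j)\bigr)^{1/2}.
\]
Using $x/N_j\leq H$ and $\beta\leq(\log x)^{-2A}$, this is $\ll q^{1/2}H(\log x)^{1-A}$, which is acceptable. I expect the main obstacle to be bookkeeping the thresholds: ensuring that the saving $x^{-\delta}$, or $(H/x)^{1/2}$ beating $(x/N_j)^{1/2}$, really outweighs the logarithmic losses from $q_0$, $\mathbf q$ and the $(\log x)^{A/3}$ factors.
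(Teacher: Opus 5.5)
Your set-up matches the paper. You show $a_j\equiv 1$, remove the coprimality condition by M\"obius inversion, and combine Lemma~\ref{lem:second-moment} for the long factor with Lemma~\ref{lem:mean-value} for the remaining product via Cauchy--Schwarz. But you abandon this route exactly where it succeeds, and the apparent obstruction comes from two slips.

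First, $x/N_j\ll q_1x/H$ is false in general (e.g.\ $q$ of polylogarithmic size, $H=x^{0.9}$, $N_j=x^{0.1+2\eps}$). So $x/N_j$ must be kept in the mean value for $M$; it is in fact the right partner for $N_j/(1+T)$.

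Second, the term $N_j/(1+T)$ is harmless, and the paper handles it in two regimes.
\begin{itemize}
\item If $2\pi\beta x\ge 4T_q(\beta)$, then $1+T\gg x/H$, so $N_j/(1+T)\ll N_jH/x$. Up to powers of $\log x$, its cross terms are $(q_1N_j)^{1/2}\le (qx)^{1/2}(N_j/x)^{1/2}$ and $H^{1/2}$. The first is acceptable because $N_j\le x/P_1$, a saving you overlooked; the second because $H\le x^{19/20}$.
\item If $2\pi\beta x<4T_q(\beta)$ (which forces $q>(\log x)^A$), cut the $t$-range dyadically into pieces $[T_1,2T_1]$ and apply \emph{both} mean value bounds on the same piece. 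The diagonal for $M$ is then $q_1T_1$ rather than $q_1x/H$. The resulting terms are $q_1T_1$, $(q_1T_1x/N_j)^{1/2}$, $(q_1N_j)^{1/2}$ and $x^{1/2}(1+T_1)^{-1/2}\le x^{1/2}$, all acceptable. The last is $\ll (qx)^{1/2}(\log x)^{-A/3}$ precisely because $q>(\log x)^A$.
\end{itemize}
Your dyadic step split only the $N_j$-factor and kept the full-window bound for $M$. That is what produced the spurious residual $(q_1x/H)^{1/2}N_j^{1/2}$.

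The pointwise fallback via Lemma~\ref{lem:upper-bd-diri} is a genuine gap.
\begin{itemize}
\item That lemma requires the modulus to be at most $(\log x)^{A}$, whereas in your first case $q_1$ can be as large as $H^2x^{-1}(\log x)^{-A/3}$.
\item Even where it applies, its saving $x^{-\delta}+(\log x)(1+T_0)^{-1/2}$ cannot offset Cauchy--Schwarz over the trivial length. For instance, your term $N_j^{1/2}x^{-\delta}q_1T'$ exceeds $(qx)^{1/2}$ by roughly $N_j^{1/2}x^{-\delta}$ when $q\asymp H^2/x$.
\item Your small-$|t|$ estimate drops the factor $q_1^{1/2}$ coming from the sum over characters. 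Once it is restored, the bound you get for that piece is at least $(q_1x)^{1/2}(\log x)^{3A/2}$, which is never acceptable.
\item Your displayed small-$q$ bound is also wrong. Its cross term with $x/N_j$ equals $H\log x$ exactly. The discarded $x^{-\delta}$ part of Lemma~\ref{lem:upper-bd-diri} contributes $x^{1/2-\delta}H^{1/2}$, which is $\gg H$ unless $H\ge x^{1-2\delta}$.
\end{itemize}

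The small-$q$ case needs no pointwise input at all. Since $\alpha\in\minor$ forces $2\pi\beta x\ge 4T_q(\beta)$, the same Lemma~\ref{lem:second-moment} computation gives the savings $(qx)^{1/2}/H\ll x^{-\eps/2}$, $(x/(HN_j))^{1/2}\le x^{-\eps/2}$, $(N_j/x)^{1/2}\le P_1^{-1/2}$ and $(H/x)^{1/2}\le x^{-1/40}$. These beat the polylogarithmic losses in Proposition~\ref{prop:small-beta}(1).

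Finally, with the choice $Q=H^2/(x(\log x)^{A/3})$ made for this section, one always has $|\beta|\le x(\log x)^{A/3}/H^2$. So Proposition~\ref{pro:large-beta} is never needed here.
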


\begin{proof}
We may write
$$F_{\mathbf{q},\mu_{\boldsymbol N}}\Big(\frac{1}{2}+it,\chi\Big)=N_{j,\mathbf{q}}\Big(\frac{1}{2}+it,\chi\Big)\prod_{l\in \{1,\ldots,2k+1\}\setminus \{j\}}N_{l,\mathbf{q}}\Big(\frac{1}{2}+it,\chi\Big),$$
where the assumption $N_j\geq x^{1+\eps}/H \geq  x^{1/20+\eps }$ entails that the corresponding coefficient  is $a_j={\bf 1}_{(N_j,2N_j]}$,  and therefore
\[
N_{j,\mathbf{q}}\Big(\frac{1}{2}+it,\chi\Big) = \sum_{ \substack{ \frac{N_j}{q_j}< n\leq \frac{2N_j}{q_j}  \\ (n,\frac{q_0}{q_R \cdots q_j})=1}} \frac{\chi(n)}{n^{1/2+it}}.
\]
Using the identity ${\bf 1}_{(a,b)=1}= \sum_{l\mid (a,b)}\mu(l)$, it transpires that
\[
N_{j,\mathbf{q}}\Big(\frac{1}{2}+it,\chi\Big) =\sum_{l\mid \frac{q_0}{q_R \cdots q_j}} \mu(l) \sum_{ \substack{ \frac{N_j}{q_j}< n\leq \frac{2N_j}{q_j} \\ l\mid n}} \frac{\chi(n)}{n^{1/2+it}} = \sum_{l\mid \frac{q_0}{q_R \cdots q_j}} \frac{\mu(l) \chi(l)}{l^{1/2+it}} \sum_{\frac{N_j}{lq_j}< n\leq \frac{2N_j}{lq_j} }\frac{\chi(n)}{n^{1/2+it}}.
\]
In light of Proposition~\ref{prop:small-beta}, it then suffices to bound the expression
\begin{align}\label{eq:typei}
\twosum{q=q_0q_1}{q_0=q_R \cdots q_{2k-1}} \sum_{l\mid \frac{q_0}{q_R \cdots q_j}} \frac{1}{\sqrt l} \sum_{\chi \mod {q_1}} \int_{2\pi \beta x-T_{q}(\beta)}^{2\pi \beta x+T_{q}(\beta)}  \Bigabs{ \sum_{\frac{N_j}{lq_j}< n\leq \frac{2N_j}{lq_j} }\frac{\chi(n)}{n^{1/2+it}} \prod_{j'\in [2k+1]\backslash \set{j}} N_{j',\mathbf{q}} \Big(\frac12+it,\chi\Big)} \rd t,
\end{align}
where as above we may assume that $\beta$ is non-negative. Since $(2\pi \beta x)^{1/3} \leq (x/H)^{2/3+\eps } \leq x/H\leq T_{q}(\beta)$, Lemma~\ref{lem:second-moment} is applicable. 

We first suppose that $2\pi\beta x\geq 4T_{q}(\beta)$. Indeed, by writing $C_{q}=(\log x)^{A/49}$ if $q> (\log x)^{A}$ (and hence $\lvert \beta \rvert \leq x/H^{2}$) and $C_{q}=(\log x)^{A+3}$ when $q\leq(\log x)^{A}$ it follows that
\[
\sum_{\substack{\chi \mod {q_1}}} \int_{2\pi \beta x-T_{q}(\beta)}^{2\pi \beta x+T_{q}(\beta)}  \Bigabs{ \sum_{\frac{N_j}{lq_j}< n\leq \frac{2N_j}{lq_j} }\frac{\chi(n)}{n^{1/2+it}}}^2\rd t\ll (\log x)^{C_{q}} \Bigbrac{\frac{q_{1}x}{H} + \frac{N_{j}H}{lq_j x}},
\]
where we used the fact that $T_{q}(\beta)\gg x/H$. Thus, by the Cauchy--Schwarz inequality, the preceding estimate and Lemma~\ref{lem:mean-value}, the inner sum in~\eqref{eq:typei} is bounded above by
\begin{align*}
&  \Bigbrac{\sum_{\substack{\chi }}\int_{2\pi \beta x-T_{q}(\beta)}^{2\pi \beta x+T_{q}(\beta)}   \Bigabs{ \sum_{\frac{N_j}{lq_j}< n\leq \frac{2N_j}{lq_j} }\frac{\chi(n)}{n^{1/2+it}}}^2\rd t}^{1/2}  \Bigbrac{\sum_{\substack{\chi }}\int_{2\pi \beta x-T_{q}(\beta)}^{2\pi \beta x+T_{q}(\beta)}  \Bigabs{ \prod_{j'\in [2k+1]\backslash \set{j}} N_{j',\mathbf{q}} \Big(\frac12+it,\chi\Big)}^2 \rd t }^{1/2}\\
&\ll (\log x)^{C_{q}} \Bigbrac{\frac{q_1x}{H} + \frac{N_{j}H}{lq_j x}}^{1/2}  \Bigbrac{\frac{q_1x}{H} +\frac{x}{N_j}}^{1/2}.	
\end{align*}
Consequently, the expression in \eqref{eq:typei} is bounded by
\begin{align*}
&(\log x)^{C_{q}} \sum_{q=q_0q_1} d(q_{0})d_{4k+2}(q_{0}) \Bigbrac{\frac{q_1x}{H} + \frac{N_{j}H}{lx}}^{1/2}   \Bigbrac{\frac{q_1x}{H} +\frac{x}{N_j}}^{1/2}\\
\ll & \frac{qx(\log x)^{C_{q}}}{H} \sum_{q_0q_1=q} \frac{d(q_{0})d_{4k+2}(q_0)}{q_0} + (qx)^{1/2} \Big(\frac{x}{HN_j}\Big)^{1/2} (\log x)^{C_{q}} \sum_{q_0q_1=q} \frac{d(q_{0})d_{4k+2}(q_0)}{q_0^{1/2}}\\
&+ (qx)^{1/2}(\log x)^{C_{q}}\Big(\frac{N_{j}}{x}\Big)^{1/2} \sum_{q_0q_1=q} \frac{d(q_{0})d_{4k+2}(q_0)}{q_0^{1/2}} + H^{1/2} (\log x)^{C_{q}} \sum_{q_0q_1=q}d(q_{0})d_{4k+2}(q_0)\\
\ll& (qx)^{1/2}(\log x)^{C_{q}+O(1)}\Big((qx)^{1/2}/H +x^{-\eps/4}+\exp\big((\log x)^{1/2}\big)\Big(\frac{N_{j}}{x}\Big)^{1/2}+\frac{H^{1/2}}{x^{1/2}}\Big)
\\
\ll& (qx)^{1/2}(\log x)^{C_{q}+O(1)}\Big((qx)^{1/2}/H +\exp\Big(-\frac{1}{4}(\log x)^{2/3}\Big)\Big).
\end{align*}
It is worth noting that here we have used the assumptions $N_j\geq x^{1+\eps }/H$ and the restriction $H\leq x^{19/20}$. We have also employed the fact that by equation (\ref{NNN}), one has $N_{j}\leq x/P_{1}\ll x\exp(-(\log x)^{2/3})$, and that $q_{0}\leq \exp\big((\log x)^{1/2}\big).$ If $q>(\log x)^{A}$ then $C_{q}=A/49$, whence the bound $q\leq H^{2}/x(\log x)^{A/3}$ entails that the above is $O((qx)^{1/2}(\log x)^{-A/10})$, which yields the desired estimate. If instead $q\leq (\log x)^{A}$, the last line is $O((qx)^{1/2}\exp(-\frac{1}{4}(\log x)^{2/3}))$, as required.

If instead $2\pi\beta x< 4T_{q}(\beta)$ we note first that $q> (\log x)^{A}$, since if $q\leq (\log x)^{A}$ and $\alpha\in\minor$ one would have $\beta\geq (\log x)^{5A}/H$, and hence the above would not occur. Then, by making a dyadic dissection of $[2\pi \beta x-T_{q}(\beta),2\pi \beta x+T_{q}(\beta)]$, it suffices to consider the integral over intervals $[T_{1},2T_{1}]$ with $0\leq T_{1}\ll T_{q}(\beta)$.  Therefore, Lemma~\ref{lem:second-moment} delivers
\[
 \int_{T_{1}}^{2T_{1}}  \Bigabs{ \sum_{\frac{N_j}{lq_j}< n\leq \frac{2N_j}{lq_j} }\frac{\chi(n)}{n^{1/2+it}}}^2\rd t\ll (\log x)^{O(1)} \Bigbrac{q_{1}T_{1} + \frac{N_{j}}{lq_j (1+T_{1})}}.
\]
Consequently, combining Lemma \ref{lem:mean-value} with Cauchy-Schwarz and the above estimate yields
\begin{align*}
& \sum_{\chi \mod {q_1}} \int_{T_{1}}^{2T_{1}} \Bigabs{ \sum_{\frac{N_j}{lq_j}< n\leq \frac{2N_j}{lq_j} }\frac{\chi(n)}{n^{1/2+it}} \prod_{j'\in [2k+1]\backslash \set{j}} N_{j',\mathbf{q}} \Big(\frac12+it,\chi\Big)} \rd t\ll 
\\
&\Bigbrac{\sum_{\chi \mod {q_1}}\int_{T_{1}}^{2T_{1}}   \Bigabs{ \sum_{\frac{N_j}{lq_j}< n\leq \frac{2N_j}{lq_j} }\frac{\chi(n)}{n^{1/2+it}}}^2\rd t}^{1/2}  \Bigbrac{\sum_{\chi \mod {q_1}}\int_{T_{1}}^{2T_{1}}  \Bigabs{ \prod_{j'\in [2k+1]\backslash \set{j}} N_{j',\mathbf{q}} \Big(\frac12+it,\chi\Big)}^2 \rd t }^{1/2}\\
&\ll (\log x)^{O(1)} \Bigbrac{q_{1}T_{1}+ \frac{N_{j}}{lq_j (1+T_{1})}}^{1/2}  \Bigbrac{q_{1}T_{1} +\frac{x}{N_j}}^{1/2}.
\end{align*}

Therefore, an analogous argument as before in conjunction with the fact that $T_{1}\ll T_{q}(\beta)$ permits one to conclude that \eqref{eq:typei} is bounded by a constant times
\begin{align*} \ll& 	(\log x)^{O(1)} (qx)^{1/2}\Big((qx)^{1/2}/H +x^{-\eps/4}+\exp\big(-\frac{1}{4}(\log x)^{2/3}\big)\Big)
\\
&+(\log x)^{O(1)} \frac{q^{\varepsilon}x^{1/2}}{(1+T_{1})^{1/2}}.
\end{align*}

By the same argument as above, the first summand is $O((qx)^{1/2} (\log x)^{-A/10})$. Moreover, since $q>(\log x)^{A}$ one has $(\log x)^{O(1)}q^{\varepsilon}x^{1/2}(1+T_{1})^{-1/2}\ll q^{1/2}x^{1/2}(\log x)^{-A/3}$, as desired.

\end{proof}

\section{Type I$_2$ estimates}\label{sec7}

\begin{lemma}[Type I$_2$ estimates]\label{lem:typei2}

Let $x^{121/210+\eps }\leq H\leq x^{17/20-4\eps }$. Let $\mu_{\boldsymbol N}$ be defined as in \eqref{mobius-n}. If there exist two distinct indexes $j_1, j_2\in \set{1,\dots,2k+1}$  such that $N_{j_1},N_{j_2} \leq x^{1+\eps }/H$ and $N_{j_1}N_{j_2} \geq (x/H)^{4/3+\eps }$, then one has
\[
\sup_{\alpha\in \minor} \bigabs{\sum_{x<n \leq x+H} \mu_{\boldsymbol N} e(n\alpha)} \ll H(\log x)^{-A/4}.
\]	
\end{lemma}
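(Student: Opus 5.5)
We follow the template of Lemmas~\ref{lem:type-ii} and \ref{lem:typei}, with a different Dirichlet approximation and a fourth-moment input. Write $N_{j_1}N_{j_2}=(x/H)^{\kappa}$ with $4/3+\eps\leq\kappa\leq 2+2\eps$. Both $N_{j_1},N_{j_2}\geq (x/H)^{1/3}$, and $H\leq x^{17/20}$ makes this large enough that the coefficients are $a_{j_i}={\bf 1}_{(N_{j_i},2N_{j_i}]}$. The coprimality conditions in Lemma~\ref{lem: produ-diri-poly} are removed by M\"obius inversion exactly as in Lemma~\ref{lem:typei}, which costs only divisor factors in $q_0$. So, for each factorisation $\mathbf q$, $D_{\mu_{\boldsymbol N}}(1/2+it,\chi)$ becomes a product
\[
A_1(s,\chi)A_2(s,\chi)B(s,\chi),
\]
where $A_1,A_2$ are smooth character sums $\sum\chi(n)n^{-s}$ of lengths $\asymp N_{j_1}/(lq_{j_1})$, $N_{j_2}/(l'q_{j_2})$, and $B$ has divisor-bounded coefficients and length $\asymp x/(N_{j_1}N_{j_2})$. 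We take $Q$ in \eqref{eq:dirich-app} as large as the $q$-aspect of the fourth moment allows; a natural choice is $Q=H^2/(x(\log x)^{A/3})$ again, possibly shrunk. Then we apply Proposition~\ref{prop:small-beta} when $|\beta|\leq x(\log x)^{A/3}/H^2$ and Proposition~\ref{pro:large-beta} otherwise. In every case we must bound $\sum_{\chi\mod{q_1}}\int_I|A_1A_2B|\,\rd t$ over an interval $I$ of length $T'$ around $T=2\pi\beta x$. Here $T'\asymp x(\log x)^{O(1)}/H$ in the small-$\beta$ case and $T'\asymp \beta H$ in the large-$\beta$ case.

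The key estimate is H\"older with exponents $(4,4,2)$:
\[
\int|A_1A_2B|\leq\Big(\sum_\chi\int|A_1|^4\Big)^{1/4}\Big(\sum_\chi\int|A_2|^4\Big)^{1/4}\Big(\sum_\chi\int|B|^2\Big)^{1/2}.
\]
We bound the first two factors by Lemma~\ref{lem:fourth-moment}, giving $T^{\eps}(q_1T'+q_1^{71/96}T^{2/3})+N^2T^{-3}$, and the last by Lemma~\ref{lem:mean-value}, giving $(q_1T'+x/(N_{j_1}N_{j_2}))(\log x)^{O(1)}$. With $T'\asymp x/H$ in the small-$\beta$ range, the target is $(qx)^{1/2}(\log x)^{-A/3}$ after normalisation. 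The diagonal piece $q_1T'\cdot x/(N_{j_1}N_{j_2})$ is fine precisely when $N_{j_1}N_{j_2}\gg (x/H)^{4/3}$ in the regime where the $q_1T'$ term dominates the fourth moments.

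The terms involving $q^{71/96}T^{2/3}$ must also be checked. The worst case is $T\approx x^{1+\eps}/H\cdot Q^{-1}\cdot H/x$, i.e.\ $T=|\beta|x\leq x/(qQ)\cdot x/x$. This means we need $q^{71/96}(|\beta| x)^{2/3}\ll q\,x/H$ roughly, which is where the exponent $121/210$ should come from, after balancing against $q\leq Q$ and the choice of $Q$. For the large-$\beta$ case we cut $I$ into subintervals of length $x/H$ when $\beta H$ exceeds $T^{2/3}$, or use the $T'\geq T^{2/3}$ branch of Lemma~\ref{lem:fourth-moment} directly. We then multiply by the prefactor $(q\beta)^{-1/2}$ and check that the loss is compensated since $\beta\geq x/H^2$.

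When $q\leq(\log x)^A$ we instead extract the prime polynomial $P(s,\chi)$ using Lemma~\ref{lem:bd-diri-prime}, as in Lemma~\ref{lem:type-ii}, to gain the needed logarithmic saving. If $p$ is one of $j_1,j_2$, this is impossible because $P_1$ is too small, so $P$ always sits inside $B$.

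The main obstacle is the exponent bookkeeping for the term $q_1^{71/96}T^{2/3}$. The height $T=2\pi\beta x$ can be as large as $x/(qQ)\cdot x$ scaled, and the interval length $T'$ may fall below $T^{1/2}$ when $\beta$ is large. We must verify that a single choice of $Q$ (as a power of $H^2/x$) makes both the small-$q$ large-$T$ and large-$q$ small-$T$ extremes acceptable throughout $x^{121/210+\eps}\leq H\leq x^{17/20-4\eps}$. We would first try $Q=H^2/(x(\log x)^{A/3})$ and, if the large-$\beta$ case fails, reduce $Q$ and absorb the resulting larger $|\beta|$ through Proposition~\ref{pro:large-beta}.
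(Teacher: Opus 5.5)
Your skeleton matches the paper's. You make the $j_1,j_2$ factors smooth character sums by M\"obius inversion, apply H\"older with exponents $(4,4,2)$, bound the two fourth moments by Lemma~\ref{lem:fourth-moment} and the remaining factor by Lemma~\ref{lem:mean-value}, and feed this into Propositions~\ref{prop:small-beta} and~\ref{pro:large-beta}. However, you leave open exactly the points that carry the proof, and your heuristics for them are wrong.

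The first gap is the choice of $Q$ in \eqref{eq:dirich-app}. Your default $Q=H^2/(x(\log x)^{A/3})$ fails, and not in the large-$\beta$ range, which does not even arise with that choice. Take $q$ within logarithmic factors of $Q$ and $\beta x$ large compared with $x/H$. Then your bound contains the term $x^{c\eps}qx/H$, coming from the $q_1T'$ parts of both fourth moments and of the mean value; the factor $x^{c\eps}$ is the $T^{\eps}$ loss in Lemma~\ref{lem:fourth-moment}. The target is $(qx)^{1/2}(\log x)^{-A/10}$. This forces $(qx)^{1/2}/H$ to be a negative power of $x$, whereas $q\approx Q$ gives only $(\log x)^{-A/6}$, and shrinking $Q$ by logarithms cannot help. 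The paper takes $Q=H^2/x^{1+\eps}$, so $q\le H^2/x^{1+\eps}$ and $q\beta\le x^{1+\eps}/H^2$. It then treats $x(\log x)^{A/3}/H^2<\beta\le x^{1+\eps}/(qH^2)$ by Proposition~\ref{pro:large-beta} with interval length $2\beta H$. The outcome is a power saving in the main range, which covers $q\le(\log x)^A$ as well, so your extraction of $P(s,\chi)$ there is unnecessary.

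The second gap is the bookkeeping you defer, which is also misattributed. With this $Q$, $\beta\le x(\log x)^{A/3}/H^2$ and $\beta x$ large compared with $x/H$, the H\"older bound is
\[
x^{\eps/90}(qx)^{1/2}\Big(\frac{(qx)^{1/2}}{H}+\frac{x^{1/2}}{(HN_{j_1}N_{j_2})^{1/2}}+\frac{q^{7/192}(q\beta x)^{1/3}}{H^{1/2}}+\frac{q^{-89/192}(q\beta x)^{1/3}}{(N_{j_1}N_{j_2})^{1/2}}\Big).
\]
\begin{itemize}
\item The second summand is the cross term $q_1T'\cdot x/(N_{j_1}N_{j_2})$, and it only needs $N_{j_1}N_{j_2}\ge x^{1+\eps}/H$.
\item The hypothesis $N_{j_1}N_{j_2}\ge (x/H)^{4/3+\eps}$ is what controls the last summand, which pairs $q_1^{71/96}T^{2/3}$ with $x/(N_{j_1}N_{j_2})$.
\item The exponent $121/210$ comes from the third summand, pairing $q_1^{71/96}T^{2/3}$ with $q_1T'$. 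Using $q\le H^2/x^{1+\eps}$ and $q\beta x\le x^{2+\eps}/H^2$, it is at most $x^{121/192+\eps}H^{-210/192}$.
\item Your stated requirement $q^{71/96}(\beta x)^{2/3}\ll qx/H$ is not what is needed, and it is false: for $q=1$ and $\beta\asymp x/H^2$ one has $(\beta x)^{2/3}\asymp (x/H)^{4/3}$.
\end{itemize}

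Finally, you do not treat the range where $2\pi\beta x$ is not large compared with the interval length, for instance $\beta=0$. There the principal-character term $N_j^2/(1+T^3)$ in Lemma~\ref{lem:fourth-moment} is of size $N_j^2$, so a single application at height $2\pi\beta x$ breaks down. The paper notes that this range forces $q>(\log x)^A$ and $\beta\le x/H^2$. It then dissects into dyadic ranges $[T_1,2T_1]$ and uses the $T'=T$ case of Lemma~\ref{lem:fourth-moment}, which loses only logarithms. Terms such as $q^{\eps}x^{1/2}(1+T_1)^{-3/2}$ are then absorbed using $q>(\log x)^A$.
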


\begin{proof}
We first remark that the $x^\eps$ loss in Lemma~\ref{lem:fourth-moment} prevents one from using the same Dirichlet approximation as in Section~\ref{sec:typei-ii}. We therefore take $Q=H^2/x^{1+\eps}$ in~\eqref{eq:dirich-app}, so that $1\leq q\leq H^2/x^{1+\eps}$ and
\begin{equation}\label{khkh}
q|\beta|=\norm{q\alpha}\leq x^{1+\eps}/H^2.
\end{equation}

As above, we may suppose that $\beta$ is non-negative. We start by observing that the assumptions imply $N_{j_1},N_{j_2}\geq x^{1/20+\eps}$, since otherwise, $(x/H)^{4/3+\eps } \leq N_{j_1}N_{j_2} \leq x^{1/20+\eps }(x/H)^{1+\eps }$, which contradicts the proviso $H\leq x^{17/20-4\eps}$. Thus the corresponding Dirichlet polynomial coefficients satisfy $a_{j_1}=a_{j_2}=1$. As in the proof of Lemma~\ref{lem:typei}, we may therefore assume that for $l=1,2$ then
\[
\Big| N_{j_{l},\mathbf{q}}\Big(\frac{1}{2}+it,\chi\Big)\Big|\ll \sum_{d_l \mid \frac{q_0}{q_R\cdots q_{j_l}}} d_l^{-1/2} \bigg| \sum_{n_{j_l}\sim\frac{N_{j_l}}{q_{j_l} d_l}} \frac{\chi(n_{j_l})}{n_{j_l}^{1/2+it}}\bigg|.
\]
Since the integration ranges in Propositions~\ref{prop:small-beta} and~\ref{pro:large-beta} depend on the size of $\beta$, we set for convenience
\[
F(\beta)=\begin{cases}T_{q}(\beta) \qquad &\text{ if } \beta\leq x(\log x)^{A/3}/H^2;\\
 	 2\beta H, & \text{ if } x(\log x)^{A/3}/H^2 < \beta\leq x^{1+\eps }/(qH^2).
 	 \end{cases}\]
By these propositions, it now suffices to prove 
\begin{align*}
& \sum_{q_1,\chi} \int_{2\pi \beta x-F(\beta)}^{2\pi \beta x+F(\beta)} \Bigabs{ N_{j_{1},\mathbf{q}}\Big(\frac{1}{2}+it,\chi\Big) N_{j_{2},\mathbf{q}}\Big(\frac{1}{2}+it,\chi\Big)\prod_{j'\in[2k+1]\backslash\set{j_1,j_2}}  N_{j',\mathbf{q}}\Big(\frac{1}{2}+it,\chi\Big)}\rd t\\
\ll& \sum_{q_1,\chi}  \sum_{\substack{d_1 \mid \frac{q_0}{q_R\cdots q_{j_1}}\\ d_2 \mid \frac{q_0}{q_R\cdots q_{j_2}}}} (d_1d_2)^{-1/2} \int_{2\pi \beta x-F(\beta)}^{2\pi \beta x+F(\beta)}\prod_{l=1,2} \bigg| \sum_{n_{j_l}\sim\frac{N_{j_l}}{q_{j_l} d_l}} \frac{\chi(n_{j_l})}{n_{j_l}^{1/2+it}}\bigg| \biggabs{ \prod_{j'\in[2k+1]\backslash\set{j_1,j_2}}  N_{j',\mathbf{q}}\Big(\frac{1}{2}+it,\chi\Big)}\rd t\\
\ll& \begin{cases}
 	\sqrt{qx}(\log x)^{-A}, \qquad&\text{ if $\beta\leq x(\log x)^{A/3}/H^2 $ and either $q\leq (\log x)^{A}$ or $ \beta >x/H^{2}$},\\
\sqrt{qx} (\log x)^{-A/10}, & \text{ if $q> (\log x)^{A}$ and $ \beta \leq x/H^{2}$},\\
 	 \sqrt{q \beta}H (\log x)^{-A}, & \text{ if } x(\log x)^{A/3}/H^2 < \beta\leq x^{1+\eps }/(qH^2),
 \end{cases}
 \end{align*}
where we used the symbol $\sum_{q_{1},\chi}$ to denote the sum $\sum_{q_0q_{1}=q}\sum_{q_{R}q_{1}'q_{2}\cdots q_{2k-1}=q_{0}}\sum_{\chi\mod{q_{1}}}$.

We first assume that $2\pi\beta x\geq 4F(\beta)$. Since $N_{j_l}\leq x^{1+\eps }/H$, it therefore transpires that $N_{j_l}^{2}(\beta x)^{-3}\ll 1\leq q_1 F(\beta)$. Consequently, applying Lemma~\ref{lem:fourth-moment} we obtain for $l=1,2$ the estimate
\begin{align}\label{eqz}
\sum_{\chi \mod {q_1}} \int_{2\pi \beta x-F(\beta)}^{2\pi \beta x+F(\beta)} \bigg| \sum_{n_{j_l}\sim\frac{N_{j_l}}{q_{j_l} d_l}} \frac{\chi(n_{j_l})}{n_{j_l}^{1/2+it}}\bigg|^4 \rd t& \ll x^{\eps/100 } \bigbrac{q_{1}F(\beta) +q_1^{\frac{71}{96}}(\beta x)^{2/3}} +\frac{N_{j_l}^2}{(\beta x)^{3}}\\
& \ll x^{\eps/100 } \bigbrac{q_1F(\beta) +q_1^{\frac{71}{96}}( \beta x)^{2/3}}\nonumber.
\end{align}
We then deduce from H\"older's inequality, the above estimate and Lemma~\ref{lem:mean-value} that
\begin{align}\label{chi1}
& \sum_{q_1,\chi}  \sum_{\substack{d_1 \mid \frac{q_0}{q_R\cdots q_{j_1}}\\ d_2 \mid \frac{q_0}{q_R\cdots q_{j_2}}}} (d_1d_2)^{-1/2} \int_{2\pi \beta x-F(\beta)}^{2\pi \beta x+F(\beta)}\prod_{l=1,2} \bigg| \sum_{n_{j_l}\sim\frac{N_{j_l}}{q_{j_l} d_l}} \frac{\chi(n_{j_l})}{n_{j_l}^{1/2+it}}\bigg| \biggabs{ \prod_{j'\in[2k+1]\backslash\set{j_1,j_2}}  N_{j',\mathbf{q}}\Big(\frac{1}{2}+it,\chi\Big)}\rd t\\
\ll& x^{\eps/100 } \sum_{q=q_0q_1} d(q_{0})^{2}d_{6k+3}(q_0) \bigbrac{ q_1F(\beta) +q_1^{\frac{71}{96}}( \beta x)^{2/3}}^{1/2} \Bigbrac{q_1F(\beta) + \frac{x}{N_{j_1} N_{j_2}}}^{1/2}.\nonumber
\end{align}
Suppose first that $\beta \leq x(\log x)^{A/3}/H^2$, and hence $F(\beta)=T_{q}(\beta)$. Since $N_{j_1} N_{j_2}\gg (x/H)^{4/3+\eps }$ and since (\ref{khkh}) holds, the preceding expression is bounded by
\begin{align}\label{jgjg}
\ll&x^{\eps/90} (qx)^{1/2} \Bigbrac{ \frac{(qx)^{1/2}}{H} + \frac{x^{1/2}}{(HN_{j_1} N_{j_2})^{1/2}} + \frac{q^{\frac{7}{192}} (q\beta x)^{1/3}}{H^{1/2}} + \frac{q^{-\frac{89}{192}} (q\beta x)^{1/3} }{ (N_{j_1} N_{j_2})^{1/2}} }\nonumber\\
\ll & x^{\eps/90} (qx)^{1/2} \Bigbrac{ \frac{(qx)^{1/2}}{H} + \frac{(x/H)^{1/2}}{(x/H)^{2/3}} + \frac{q^{7/192}x^{2/3+\eps}}{H^{7/6}} +\frac{x^{2/3}}
{H^{2/3}(x/H)^{2/3+\eps/2}}}\nonumber\\
\ll & x^{-\eps /90}	 (qx)^{1/2},
\end{align}
the last inequality following from the assumptions $q\leq H^2/x^{1+\eps}$ and $H\geq x^{121/210+\eps }$. This gives the desired bound.

Suppose instead that $x(\log x)^{A/3}/H^2< \beta \leq x^{1+\eps}/qH^2$, and hence $F(\beta)=2\beta H$. Using this upper bound on $\beta$, it follows that (\ref{chi1}) is bounded above by
\begin{align*}
&\ll x^{\eps/90} (q\beta)^{1/2} H \Bigbrac{ (q\beta)^{1/2} +\sqrt{\frac{x}{HN_{j_1}N_{j_2}}} +\frac{q^{7/192}(q\beta x)^{1/3}}{H^{1/2}} +\frac{q^{-\frac{89}{192}}(q\beta x)^{1/3}}{(N_{j_1}N_{j_2})^{1/2}}}	\\
&\ll x^{-\eps/90 }(q\beta)^{1/2} H,
\end{align*}
where we used the estimate (\ref{jgjg}) to bound the last three summands and the condition $H\geq x^{1/2+\varepsilon}$.

We next assume that $2\pi\beta x<4F(\beta)$. Note that then $q> (\log x)^{A}$, since $\alpha\in\minor$ and hence if $q\leq (\log x)^{A}$ one would have $\beta\geq (\log x)^{5A}/H$ and the above bound would not occur. Moreover, it is apparent that $\beta\leq x/H^{2}$, since otherwise $2\pi \beta x>F(\beta)$. By making a dyadic dissection of $[2\pi \beta x-F(\beta),2\pi \beta x+F(\beta)]$, in order to estimate (\ref{eqz}) it suffices to consider the integral over intervals $[T_{1},2T_{1}]$ with $0\leq T_{1}\ll F(\beta)$. Indeed, an application of Lemma \ref{lem:fourth-moment} enables one to obtain for $l=1,2$ the estimate

\begin{align*}
\sum_{\chi \mod {q_1}} \int_{T_{1}}^{2T_{1}}  \bigg| \sum_{n_{j_l}\sim\frac{N_{j_l}}{q_{j_l} d_l}} \frac{\chi(n_{j_l})}{n_{j_l}^{1/2+it}}\bigg|^4 \rd t&\ll (\log x)^{O(1)}q_{1}T_{1} +\frac{N_{j_l}^2}{(1+T_{1}^{3})}.
\end{align*}
We then deduce from H\"older's inequality, the above estimate and Lemma~\ref{lem:mean-value} that
\begin{align*}
& \sum_{q_1,\chi}\sum_{\substack{d_1 \mid \frac{q_0}{q_R\cdots q_{j_1}}\\ d_2 \mid \frac{q_0}{q_R\cdots q_{j_2}}}} (d_1d_2)^{-1/2}  \int_{T_{1}}^{2T_{1}}\prod_{l=1,2}  \bigg| \sum_{n_{j_l}\sim\frac{N_{j_l}}{q_{j_l} d_l}} \frac{\chi(n_{j_l})}{n_{j_l}^{1/2+it}}\bigg| \biggabs{ \prod_{j'\in[2k+1]\backslash\set{j_1,j_2}}  N_{j',\mathbf{q}}\Big(\frac{1}{2}+it,\chi\Big)}\rd t\\
\ll& q^{\eps/10}(\log x)^{O(1)}  \Bigbrac{ q_{1}T_{1}+\frac{N_{j_1}^2}{(1+T_{1}^{3})}}^{1/4}\Bigbrac{ q_{1}T_{1}+\frac{N_{j_2}^2}{(1+T_{1}^{3})}}^{1/4} \Bigbrac{q_1T_{1}+ \frac{x}{N_{j_1} N_{j_2}}}^{1/2}
\\
\ll &q^{\varepsilon/10}(\log x)^{O(1)}\Big(qT_{1}+ \frac{(qN_{j_1} N_{j_2})^{1/2}}{(1+T_{1})}+q^{3/4}\max(N_{j_1} ^{1/2},N_{j_2}^{1/2})+(qT_{1})^{1/2}\Big(\frac{x}{N_{j_1} N_{j_2}}\Big)^{1/2}
\\
&+\frac{x^{1/2}}{1+T_{1}^{3/2}}+q^{1/4}\max(N_{j_1} ^{1/2},N_{j_2}^{1/2})\Big(\frac{x}{(1+T_{1})N_{j_1} N_{j_2}}\Big)^{1/2}\Big)
\\\ll &q^{\varepsilon/10}(\log x)^{O(1)}\Bigg(qT_{q}(\beta)+ (qN_{j_1} N_{j_2})^{1/2}+q^{3/4}\Big(\frac{x}{H}\Big)^{1/2+\varepsilon}
\\
&+\Big((qT_{1})^{1/2}+q^{1/4}\Big(\frac{x}{H}\Big)^{1/2+\varepsilon}\Big)\Big(\frac{x}{N_{j_1} N_{j_2}}\Big)^{1/2}+\frac{x^{1/2}}{1+T_{1}^{3/2}}\Bigg),
\end{align*}
where we used that $N_{j_1}, N_{j_2}\ll (x/H)^{1+\varepsilon}$. We then note that by assumption, $N_{j_1} N_{j_2}\ll (x/H)^{2}x^{2\eps}\ll  x^{1-4\varepsilon}.$ Moreover, $N_{j_1} N_{j_2}\gg (x/H)^{4/3}\gg T_{q}(\beta)x^{4\varepsilon}\gg T_{1}x^{4\varepsilon}$ and $q^{1/4}\ll H^{1/2}x^{-1/4}$. Therefore, the preceding term is bounded above by
\begin{align*}(\log x)^{O(1)} \Big(q^{1+\varepsilon/10}T_{q}(\beta)+(qx)^{1/2}x^{-\varepsilon}+(qx)^{1/2+\varepsilon}\frac{H^{1/6}}{x^{1/6}}+\frac{q^{\varepsilon}x^{1/2}}{1+T_{1}^{3/2}}\Big).
\end{align*}
Since $H\leq x^{17/20}$ then the third term is $O((qx)^{1/2}x^{-\varepsilon})$. Moreover, recalling that $q> (\log x)^{A}$ one has $(\log x)^{O(1)}q^{\varepsilon}x^{1/2}(1+T_{1}^{3/2})^{-1}\ll q^{1/2}x^{1/2}(\log x)^{-A/3}$, which would suffice since $\beta\leq x/H^{2}$. It is also worth noting that $qT_{q}(\beta)\ll (qx)^{1/2}x^{-\varepsilon/4}$. Consequently, 
$$q^{1+\varepsilon/10}T_{q}(\beta)+ (qx)^{1/2}x^{-\varepsilon}+\frac{q^{\varepsilon}x^{1/2}}{1+T_{1}^{3/2}}\ll (qx)^{1/2}(\log x)^{-A/3}.$$ 
This completes the proof.

\end{proof}


\section{Type III estimates}\label{sec8}

In this section, we study mean values of products of three Dirichlet polynomials, leading to what we call the \emph{Type III} estimates. We use the large-value estimates in Theorem~\ref{thm:large-values}. The proof is somewhat lengthy because it requires splitting the analysis into several cases depending on the sizes of the variables and the corresponding estimate which applies in each case. The guiding principle is to use \eqref{pp9} or~\eqref{pp91} when the parameter $\sigma$ (in the proof) lies in $[7/10,8/10]$, and the classical bound~\eqref{rr4} otherwise.

\begin{lemma}[Type III estimate]\label{lem:type-iii}

Let $\mu_{\boldsymbol N}$ be as in \eqref{mobius-n}.
	Suppose that there exists a disjoint partition $J_{1}\sqcup J_{2}\sqcup J_{3}=\{1,\ldots,2k+1\}$  and parameters 
\begin{equation*}M_{1}=\prod_{j\in J_{1}}N_{j},\ \ \ \ \ \ \ \ \ \ \ \ M_{2}=\prod_{j\in J_{2}}N_{j},\ \ \ \ \ \ \ \ \ \ \ \ M_{3}=\prod_{j\in J_{3}}N_{j}, \end{equation*}
such that  $M_{1}M_{2}M_{3}\asymp x$, with $M_{2}\leq M_{1}$ and  \begin{equation}\label{uu} 
 M_{2}\gg (x/H)^{4/5+2\varepsilon},\ \  \ \ \ x^{4/25}\ll M_{3}\ll x^{1/5}, \ \ \ \ \ \ M_1\ll H^{10} x^{-16/3-\eps }\end{equation}
and 
\begin{equation}\label{bb13}M_{2}^{1/2}M_{3}\ll x^{2/5}.\end{equation}
Then, whenever $x^{29/50+\eps }\leq H\leq x^{32/55}$ one has
$$\sup_{\alpha \in \mathfrak m} \Big|\sum_{x<n\leq x+H}\mu_{\boldsymbol N}(n)e(\alpha n)\Big|\ll H(\log x)^{-A}.$$  
	
\end{lemma}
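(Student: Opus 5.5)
Choose a Dirichlet approximation with a parameter $Q$ adapted to the Type III configuration. A natural choice is $Q=H^2/x^{1+\eps}$, as in Lemma~\ref{lem:typei2}; it may need shrinking to keep $q$ small relative to the large value ranges, since Theorem~\ref{thm:large-values} needs $q\le N^{6/5}$. We then treat the three regimes in turn. For $|\beta|\le x(\log x)^{A/3}/H^2$ we apply Proposition~\ref{prop:small-beta}, and for larger $\beta$ Proposition~\ref{pro:large-beta}. Either way the task reduces to bounding
\[
\sum_{q_1,\chi}\int_{T}^{T+U}\bigl|M_1(\tfrac12+it,\chi)M_2(\tfrac12+it,\chi)M_3(\tfrac12+it,\chi)\bigr|\,\rd t ,
\]
where $U=F(\beta)$ is either $x(\log x)^{O(1)}/H$ or $\beta H$. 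The target is $\sqrt{qx}(\log x)^{-A}$, respectively $\sqrt{q\beta}H(\log x)^{-A}$, after factoring $D_{\mu_{\boldsymbol N}}$ via Lemma~\ref{lem: produ-diri-poly} into the three blocks $M_i(s,\chi)=\prod_{j\in J_i}N_{j,\mathbf q}(s,\chi)$. When $q\le(\log x)^A$, the small prime polynomial $P(s,\chi)$ gives a logarithmic saving through Lemma~\ref{lem:bd-diri-prime}, exactly as in Lemma~\ref{lem:type-ii}. This reduces that case to a bound with an extra factor $P_1^{-1/2}$, so it suffices to prove a power-saving bound $x^{-\eps'}$ in general.

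The core step is a large value decomposition. Discretise $[T,T+U]\times\{\chi\}$ into well-spaced pairs $(t,\chi)$ at unit spacing. Dyadically split according to the sizes $|M_i(\tfrac12+it,\chi)|\approx M_i^{\sigma_i-1/2}$, giving sets $W(\sigma_1,\sigma_2,\sigma_3)$. The contribution of each set is then at most $|W|\,M_1^{\sigma_1-1/2}M_2^{\sigma_2-1/2}M_3^{\sigma_3-1/2}$. Bound $|W|$ by the minimum of three estimates:
\begin{itemize}
\item the mean value bound Lemma~\ref{lem:mean-value} for $M_1$, giving $|W|\ll (qU+M_1)M_1^{1-2\sigma_1}$;
\item large value bounds for $M_2$, taking \eqref{rr4} (Huxley) when $\sigma_2\notin[7/10,8/10]$ and \eqref{pp9}/\eqref{pp91} (Guth--Maynard type, with characters) when $\sigma_2\in[7/10,8/10]$;
\item the corresponding bounds for $M_3$, or for suitable powers $M_3^{m}$ or $M_2^{\ell}$ chosen so that the power falls into the range $(qU)^{2/3}\le N\le(qU)^{5/6}$ needed for \eqref{pp9}.
\end{itemize}
The hypotheses \eqref{uu} and \eqref{bb13} should be exactly what is needed here. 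Taking $M_2\gg (x/H)^{4/5+2\eps}$ makes a power of $M_2$ long compared with $qU\approx qx/H$. Taking $x^{4/25}\ll M_3\ll x^{1/5}$ means $M_3^{5}$ or $M_3^{4}$ has size comparable to $x$, so it lands in the critical range. The condition $M_1\ll H^{10}x^{-16/3-\eps}$ controls the mean value term for the longest block. Throughout, the normalisation $M_1M_2M_3\asymp x$ converts the bounds into exponent inequalities in $(\sigma_1,\sigma_2,\sigma_3,\theta)$ with $H=x^\theta$.

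The main obstacle will be the case analysis verifying that, for every triple $(\sigma_1,\sigma_2,\sigma_3)$, the minimum of the available bounds beats the trivial bound $\sqrt{qx}$ by $x^{-\eps}$ throughout $29/50+\eps\le\theta\le 32/55$. The tight configuration is the one with $M_3$ near $x^{1/5}$ and $\sigma$ close to $7/10$; this is the limiting case mentioned in the introduction. I would handle it by using H\"older's inequality to reduce to a one-parameter optimisation in a common $\sigma$ for the blocks $M_2$ and $M_3$ (raising them to integer powers to match lengths), then splitting into $\sigma\le 7/10$, $7/10<\sigma\le 8/10$ and $\sigma>8/10$. In each range I would check the endpoint exponents, which are linear inequalities, at the corners of the admissible region. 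Two further cases need the same treatment. Near $t=0$, with $q$ large and $\beta$ small, the principal character term $N^2/(1+T)$ is handled as in Lemma~\ref{lem:typei2}. The $q_0$-dependence contributes only harmless $d(q_0)$ factors. Finally, the case $q$ large relative to $M_3^{5/6}$ must be checked against the constraint $q\le N^{6/5}$; that check is what fixes the choice of $Q$.
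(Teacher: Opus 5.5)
Your architecture matches the paper's: three blocks, level sets in $(\sigma_1,\sigma_2,\sigma_3)$, Theorem~\ref{thm:large-values} with Huxley off $[7/10,8/10]$ and the character Guth--Maynard bound on it, a power of the short block, H\"older, and small and large $\beta$ treated separately. The genuine gap is the Dirichlet parameter $Q$, the one quantitative choice that makes the case analysis close. You propose the wrong value and tie it to a constraint that never binds. Even for $q$ as large as $H^2/x\le x^{9/55}$, every length fed into Theorem~\ref{thm:large-values} (down to $M_3\gg x^{4/25}$) has $N^{6/5}\ge x^{24/125}>x^{9/55}$.

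What actually limits $q$ is that every off-diagonal term grows with the effective height $qU$. The target ($q^{1/2}x$ for $V=|\mathcal S|M_1^{\sigma_1}M_2^{\sigma_2}M_3^{\sigma_3}$) only grows like $q^{1/2}$. With $Q=H^2/x^{1+\eps}$, $H=x^{0.58+\eps}$ and $\beta$ tiny, you can have $qU\approx qx/H\approx x^{0.58}$. Take $M_1=M_2=x^{2/5}$, $M_3=x^{1/5}$ (admissible for \eqref{uu} and \eqref{bb13}) and levels $(\sigma_1,\sigma_2,\sigma_3)=(0.65,0.65,0.55)$. You need $|\mathcal S|\ll x^{1.08-0.63}=x^{0.45}$, but:
\begin{itemize}
\item the mean value bound for $M_1$ gives $x^{0.58-0.12}=x^{0.46}$;
\item \eqref{pp9} for $M_1$ gives $x^{0.50}$;
\item the bounds for $M_3^2$ give $x^{0.54}$ or worse;
\item already the diagonal term $N^2V^{-2}$ for the product $M_1M_3$ is $x^{0.46}$.
\end{itemize}
No block, power or product does better, so the argument cannot close.

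The paper instead takes $Q=x^{2/25+\eps}$, essentially $H/x^{1/2}$ at the endpoint. Then $qx/H\le x^{1/2}$ when $\beta\le x(\log x)^{A/3}/H^2$. The price is that $\beta$ may now be as large as $1/(qQ)$, far beyond $x/H^2$. This is exactly what Proposition~\ref{pro:large-beta} handles, with effective height $q\beta H\le Hx^{-2/25-\eps}$. At $H=x^{29/50}$ the two heights coincide at $x^{1/2}$, and this balance is what fixes $Q$.

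The hypotheses are calibrated to this $Q$, and you misread their roles.
\begin{itemize}
\item The bound $M_2\gg(x/H)^{4/5+2\eps}$ puts $M_2$, hence $M_1$, above $(qU)^{2/3}$, so that \eqref{pp9} and \eqref{pp91} apply to the long blocks themselves. For large $\beta$ this needs $(q\beta H)^{2/3}\le H^{2/3}x^{-4/75}\ll(x/H)^{4/5}$, which is precisely $H\ll x^{32/55}$; that is the source of the upper restriction on $H$. With your $Q$, $(qU)^{2/3}\approx x^{0.39}$ exceeds $(x/H)^{4/5}\approx x^{0.34}$, so \eqref{pp9} is unavailable at the bottom of the $M_2$-range.
\item It is $M_3^2\in[x^{8/25},x^{2/5}]$, not $M_3^4$ or $M_3^5$, that can land in $[(qU)^{2/3},(qU)^{5/6}]$, whose right end is at most $x^{5/12}$. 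The paper uses $g=2$ there and \eqref{rr4} with $g=3$ otherwise.
\item The lower bound $M_3\gg x^{4/25}$ is needed when one long block is diagonal-dominated or both exceed $(qU)^{5/6}$.
\end{itemize}

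Finally, you cannot aim for a power saving throughout. When the diagonal terms dominate for both long blocks, the level-set bound is only $V\ll x(\log x)^{O(1)}$. The saving must then come from $q^{1/2}$ when $q\ge(\log x)^{3A}$, and from a pointwise bound when $q\le(\log x)^{3A}$. Your $P(s,\chi)$ device can supply the latter, but only if you run it for all $q\le(\log x)^{3A}$ (using that $|t|\ge(\log x)^{4A}$ on the minor arcs), not just $q\le(\log x)^{A}$.
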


\begin{proof}
Set $Q=x^{2/25+\eps}$ in~\eqref{eq:dirich-app}. As above, we may assume that $\beta$ is non-negative. Thus by Dirichlet approximation there exist $a\in\mathbb{Z}$ and $q\in\mathbb{N}$ with $(a,q)=1$ and such that 
\begin{equation}\label{sss}q\leq x^{2/25+\eps},\ \ \ \ \ \ \ \ \ \ \ \ 0\leq \beta=\alpha-a/q\leq \frac{1}{qx^{2/25+\eps}}.\end{equation} 
As in the proof of Lemma~\ref{lem:typei2}, we introduce for further convenience  the function $F(\beta)=T_{q}(\beta)$ if $\beta \leq x(\log x)^{A/3}/H^{2}$ and $F(\beta)=2\beta H$ if $\beta > x(\log x)^{A/3}/H^{2}$. Then, by a similar argument as in previous sections,
\begin{align*}&\sum_{ q_0q_1=q \atop q_0\leq \exp((\log x)^{1/2})}\sum_{\chi\mod{q_{1}}} \int_{2\pi \beta x-F(\beta)}^{2\pi \beta x+F(\beta)} \lvert D_{\mu_{\boldsymbol N}}(1/2+it,\chi)\rvert dt\nonumber
\\
&\ll\sum_{q_{1},\chi}  \int_{2\pi \beta x-F(\beta)}^{2\pi \beta x+F(\beta)} \prod_{j\in J_{1}}\lvert  N_{j,\mathbf{q}}(1/2+it,\chi)\rvert \prod_{j\in J_{2}}\lvert  N_{j,\mathbf{q}}(1/2+it,\chi)\rvert \prod_{j\in J_{3}}\lvert  N_{j,\mathbf{q}}(1/2+it,\chi)\rvert dt,
\end{align*}
where $\sum_{q_{1},\chi}$ denotes the sum $\sum_{q_0q_{1}=q}\sum_{q_{R}q_{1}'q_{2}\cdots q_{2k-1}=q_{0}}\sum_{\chi\mod{q_{1}}}$ with the restriction $q_0\leq \exp((\log x)^{1/2})$. We then write
$$\prod_{j\in J_{1}} N_{j,\mathbf{q}}(s,\chi)=\sum_{m\sim M}a_{m}\chi(m)m^{-s},\ \ \ \ \ \ \ \ \ \ \ \ \ \ \ \ \prod_{j\in J_{2}} N_{j,\mathbf{q}}(s,\chi)=\sum_{n\sim N}b_{n}\chi(n)n^{-s}$$ and $$ \prod_{j\in J_{3}} N_{j,\mathbf{q}}(s,\chi)=\sum_{r\sim R}c_{r}\chi(r)r^{-s},$$ where $$M=M_{1}/r_{1},\ \ \ \ \ \ \ \ \ \ N=M_{2}/r_{2},\ \ \ \ \ \ \ \ \ \ R=M_{3}/r_{3}$$ and $r_{1}r_{2}r_{3}=q_{0}$, with $a_{m},b_{n},c_{r}$ being divisor-bounded functions. Meanwhile, since $q_0=r_1r_2r_3 \leq \exp((\log x)^{1/2})$ and $M_2\leq M_1$, we may assume, without loss of generality, that $N\leq M$.

In order to prove the lemma, Propositions~\ref{prop:small-beta} and \ref{pro:large-beta} reduce matters to showing that
\[
\sum_{q_0q_1=q \atop q_0\leq \exp((\log x)^{1/2})}
\sum_{q_{R}q_{1}'q_{2}\cdots q_{2k-1}=q_{0}}
\sum_{\chi \mod{q_1}}
 \int_{2\pi \beta x-F(\beta)}^{2\pi \beta x+F(\beta)} 
\prod_{1\leq l\leq 3}\prod_{j\in J_l}
\lvert N_{j,\mathbf{q}}(1/2+it,\chi)\rvert \,dt
\]
is $O((qx)^{1/2}(\log x)^{-A})$ when
$\beta\leq x(\log x)^{A/3}/H^2$, and 
$O((q\beta)^{1/2}H(\log x)^{-A})$ when $\beta>x (\log x)^{A/3}/H^2$.
We now pass to a discretized formulation. We  say that a set $\mathcal S_{q_{1}}$ of pairs
$(t,\chi)$, where $t\in [2\pi \beta x-F(\beta),2\pi \beta x+F(\beta)]$ and $\chi$ is a character $\mod{q_1}$, is \emph{well-spaced} if, whenever
$(t_1,\chi),(t_2,\chi)\in\mathcal S_{q_{1}}$ with $t_1\neq t_2$, one has
\[
2\pi \beta x-F(\beta)<t_1,t_2<2\pi \beta x+F(\beta),
\qquad
|t_1-t_2|\geq 1.
\]
Let $s\in\mathbb C$ with $\Re s=1/2$ and $\Im s=t$. It therefore suffices to prove that, for any fixed $1/2\leq\sigma_1,\sigma_2,\sigma_3\leq1$ and any well-spaced set $\mathcal S_{q_{1}}$ such that every $(t,\chi)\in\mathcal S_{q_{1}}$ satisfies
\[
\Big\lvert \sum_{m\sim M}a_{m}\chi(m)m^{-s}\Big\rvert\asymp M^{\sigma_{1}-\frac12},\ \ \  \Big\lvert \sum_{n\sim N}a_{n}\chi(n)n^{-s}\Big\rvert\asymp N^{\sigma_{2}-\frac12},\ \ \\ \Big\lvert \sum_{r\sim R}a_{r}\chi(r)r^{-s}\Big\rvert\asymp R^{\sigma_{3}-\frac12},
\]
  one then has
 \begin{align}\label{small-beta}
  \sum_{q_0q_1=q }d_{6k+3}(q_{0})\lvert \mathcal S_{q_{1}}\rvert M^{\sigma_{1}}N^{\sigma_{2}}R^{\sigma_{3}}\ll q^{1/2 }x(\log x)^{-A}\quad \text{ if } 0\leq \beta\leq x (\log x)^{A/3}/H^2	
\end{align}
and
 \begin{align}\label{large-beta}
 	 \sum_{q_0q_1=q }d_{6k+3}(q_{0})\lvert \mathcal S_{q_{1}}\rvert M^{\sigma_{1}}N^{\sigma_{2}}R^{\sigma_{3}}\ll \frac{(q\beta )^{1/2}Hx^{1/2}}{ (\log x)^{A}} \quad \text{ if } \beta> x(\log x)^{A/3}/H^2.
 \end{align}
We shall divide the analysis into several cases according to the sizes of \(\beta\) and \(q\). In most of the occasions it will suffice to bound the sums over the divisor factors crudely by $d(q)d_{6k+3}(q)\ll q^\eps$. We shall also write $\mathcal S$ in lieu of $\mathcal{S}_{q_{1}}$ for simplicity.

 \subsection*{Case 1. $\beta \leq x(\log x)^{A/3}/H^2$} 
In this context, we recall that \eqref{small-beta} is the desired estimate.  Applying \eqref{rr4} to the Dirichlet polynomial running over $m$,
we obtain
\[
|\mathcal{S}| \ll (\log x)^{O(1)}M^{2-2\sigma_1}
+\frac{qx^{1+\frac{\eps}{100}}}{H}M^{\min\{1-2\sigma_1,\,4-6\sigma_1\}}.
\]
Moreover, when $M\leq (qx/H)^{5/6}$, the assumptions
\[
q\leq x^{2/25+\eps},\qquad
M\geq (x/H)^{4/5+2\eps},\qquad
H\leq x^{3/5}
\]
imply that
\[
(qx/H)^{2/3}
\leq \frac{x^{18/25+2\eps/3}}{H^{2/3}}
\leq (x/H)^{4/5+2\eps}
\leq M,
\] and similarly for $N$. Thus, in this regime, \eqref{pp9} gives
\[
|\mathcal{S}| \ll (\log x)^{O(1)}M^{2-2\sigma_1} 
+\frac{qx^{1+\frac{\eps}{100} }}{H}M^{12/5-4\sigma_1}.
\]
Combining the preceding two estimates, we obtain
\begin{equation}\label{eq1}
|\mathcal{S}| \ll (\log x)^{O(1)}M^{2-2\sigma_1} 
+\frac{qx^{1+\frac{\eps}{100} }}{H}M^{\min\{1-2\sigma_1,\,12/5-4\sigma_1,\,4-6\sigma_1\}}
\end{equation}
provided that $M\leq (qx/H)^{5/6}$. On the other hand, if $M> (qx/H)^{5/6}$ it transpires from \eqref{pp91} that
\begin{align}\label{m-large-gm}
|\mathcal{S}| \ll (\log x)^{O(1)}M^{2-2\sigma_1}  + x^{\frac{\eps}{100}} \Big(\frac{qx}{H}\Big)^{1/2} M^{3-4\sigma_1}.
\end{align}
 We now define some auxiliary functions $f_M$ and $h_M$ as follows. If
$M\leq (qx/H)^{5/6}$, set 
\begin{align}\label{f-q-large}
f_M(\sigma)=
\begin{cases}
1-2\sigma, & 1/2\leq \sigma\leq 7/10,\\
12/5-4\sigma, & 7/10\leq \sigma\leq 8/10,\\
4-6\sigma, & 8/10\leq \sigma\leq 1,
\end{cases} \qquad \text{and} \qquad  h_M(\sigma)=1.
\end{align}
If instead $M\geq (qx/H)^{5/6}$, we set
\[
f_M(\sigma)=
\begin{cases}
1-2\sigma, & \text{if $qx/H\leq M^{4-4\sigma}$},\\
3-4\sigma, & \text{otherwise},
\end{cases}
\qquad
h_M(\sigma)=
\begin{cases}
1, &  \text{if $qx/H\leq M^{4-4\sigma}$},\\
1/2, & \text{otherwise}.
\end{cases}
\]
For simplicity, we write $f$ and $h$ in place of $f_M$ and $h_M$ when no confusion can arise. Therefore, applying (\ref{eq1}) if $M\leq (qx/H)^{5/6}$, and \eqref{m-large-gm} and \eqref{rr4} when $M> (qx/H)^{5/6}$ one always has
\begin{align}\label{bound-m-small-beta}
|\mathcal{S}|
\ll (\log x)^{O(1)}M^{2-2\sigma_1} 
+x^{\frac{\eps}{100}} \left(\frac{qx}{H}\right)^{h_{M}(\sigma_1)}M^{f_{M}(\sigma_1)}.
\end{align}
Similarly, it transpires that
 \begin{equation}\label{bound-n}
\lvert \mathcal{S}\rvert\ll (\log x)^{O(1)} N^{2-2\sigma_{2}} + x^{\frac{\eps}{100}} \Big(\frac{qx}{H}\Big)^{h_{N}(\sigma_{2})}N^{f_N(\sigma_{2})}.\end{equation}
For the $R$-polynomial, we notice first that for any positive integer $g$ one has from Montgomery--Hal\'asz--Huxley's large value estimates (equation (\ref{rr4})) that
\begin{align}\label{r-large}
|\mathcal S| \ll x^{\frac{\eps}{100}} \Big(R^{2g(1-\sigma_3)} +\frac{qx}{H} R^{g(4-6\sigma_3)}\Big).
\end{align}
Moreover, if one further has
$$\Big(\frac{qx}{H}\Big)^{2/3}\leq R^{g}\leq \Big(\frac{qx}{H}\Big)^{5/6}$$ then one may apply the bound (\ref{pp9}) to deduce that
\begin{equation}\label{bound-r-2}
 \lvert \mathcal{S}\rvert\ll x^{\frac{\eps}{100}} \Big(R^{2g(1-\sigma_{3})} +\frac{qx}{H}R^{gf(\sigma_{3})}\Big),\end{equation}
where $f(\sigma_3)$ is as in \eqref{f-q-large}. 
 
In what follows, we write $V=\sum_{q_0q_1=q }d_{6k+3}(q_{0})\lvert \mathcal{S}\rvert M^{\sigma_{1}}N^{\sigma_{2}}R^{\sigma_{3}}$. By (\ref{small-beta}) it suffices to prove that, for all $1/2\leq \sigma_1,\sigma_2,\sigma_3\leq 1$, then
\[
  V=\sum_{q_0q_1=q }d_{6k+3}(q_{0})\lvert \mathcal{S}\rvert M^{\sigma_{1}}N^{\sigma_{2}}R^{\sigma_{3}}\ll q^{1/2 } x (\log x)^{-A}.
\] 
We shall divide the analysis into four cases according to whether the diagonal terms dominate in \eqref{bound-m-small-beta} and in  \eqref{bound-n}.

\subsection*{Case 1.1 } If both the condition
$(\log x)^{O(1)}M^{2-2\sigma_1} \geq x^{\frac{\eps}{100}} \big(\frac{qx}{H}\big)^{h(\sigma_1)} M^{f(\sigma_1)}$ and its counterpart $(\log x)^{O(1)} N^{2-2\sigma_2}\geq x^{\frac{\eps}{100}} \big(\frac{qx}{H}\big)^{h(\sigma_2)} N^{f(\sigma_2)}$ hold. We first assume that $q\geq (\log x)^{3A}$ and note from Cauchy-Schwarz inequality, \eqref{bound-m-small-beta} and \eqref{bound-n} that
\begin{align*}
 V& \ll (\log x)^{O(1)}  \sum_{q_0q_1=q }d_{6k+3}(q_{0}) \big(M^{2-2\sigma_1} N^{2-2\sigma_2}\big)^{1/2} M^{\sigma_1} N^{\sigma_2} R
\\
&\ll (\log x)^{O(1)}x \sum_{q_0q_1=q } \frac{ d_{6k+3}(q_{0})}{q_0} \ll q^{1/2}x(\log x)^{-A}.
\end{align*}
If instead $q\leq (\log x)^{3A}$, observing that $a_r$ may be expressed as the Dirichlet convolution of $\mu, \log \cdot$ and $1_{[N_i,2N_i]}$ and deducing that $|t|>(\log x)^{4A}$ by employing a similar argument to that in (\ref{jhjh}), we have from Lemma \ref{lem:upper-bd-diri} that 
\begin{align}\label{pointwise-r}
\Big| \sum_{r\sim R} \frac{a_r \chi(r)}{r^{1/2+it}}\Big| \ll R^{1/2} (\log x)^{-3A/2},
\end{align}
and therefore,
\begin{align*}
 V&\ll (\log x)^{O(1)} \sum_{q_0q_1=q }d_{6k+3}(q_{0})(M^{2-2\sigma_{1}}N^{2-2\sigma_{2}})^{1/2}M^{\sigma_{1}}N^{\sigma_{2}}R(\log x)^{-3A/2}
\\
&\ll (\log x)^{O(1)-3A/2}x \sum_{q_0q_1=q } \frac{ d_{6k+3}(q_{0})}{q_0} \ll q^{1/2}x(\log x)^{-A}.	
\end{align*}

\subsection*{Case 1.2 } If both the condition
$(\log x)^{O(1)}M^{2-2\sigma_1} \leq x^{\frac{\eps}{100}} \big(\frac{qx}{H}\big)^{h(\sigma_1)} M^{f(\sigma_1)}$ and its counterpart $(\log x)^{O(1)}N^{2-2\sigma_2} \leq x^{\frac{\eps}{100}} \big(\frac{qx}{H}\big)^{h(\sigma_2)} N^{f(\sigma_2)}$ hold. 
As suggested by the definitions of $f(\sigma)$ and $h(\sigma)$, we must further organise the analysis according to the size of $q$.

\subsection*{Case 1.2 (i)}
Suppose that $M,N\leq (qx/H)^{5/6}$. Then \eqref{bound-m-small-beta} yields
\[
|\mathcal S|\ll
\frac{qx^{1+\frac{\eps}{100} }}{H}M^{f_M(\sigma_1)},
\]
where $f_M(\sigma_1)$ is as in
\eqref{f-q-large}; the analogous estimate holding with $N$ and
$\sigma_2$ in lieu of $M$ and $\sigma_1$. Therefore, given any real number $1/4\leq a\leq 1/2$, one has
\begin{align}\label{a-sigma}
|\mathcal S|^a M^{\sigma_1} \ll x^{\frac{a\eps}{100}}(qx/H)^a M^{af(\sigma_1)+\sigma_1}\ll x^{\frac{a\eps}{100}} (qx/H)^a M^{\frac{7-4a}{10}}.
\end{align}
Here, the second bound follows by  noticing  from the definition of $f(\sigma)$ in \eqref{f-q-large} that for every $1/4\leq a< 1/2$, the inequality 
\begin{equation}\label{eq*}
af(\sigma) +\sigma \leq af(7/10) +7/10 =\frac{7-4a}{10}
\end{equation}
holds, where we essentially used the fact that the maximum of the left side of the above line is attained when $\sigma=7/10$.  

We first consider the range  $\Big(\frac{qx}{H}\Big)^{2/3}\leq R^{2}\leq \Big(\frac{qx}{H}\Big)^{5/6}$ so that \eqref{bound-r-2} is applicable. We may take $c_{1}=\frac{1}{4}$, $a_{1}=b_{1}=3/8$, and $c_{2}=1/8$, $a_{2}=b_{2}=7/16$. Then, using the  bound \eqref{a-sigma} for both the $M$-polynomial and $N$-polynomial in conjunction with \eqref{bound-r-2} for the choice $g=2$ for the $R$-polynomial gives
\begin{align*}
x^{-\varepsilon/10}V\ll&
\left(\frac{qx}{H}M^{f(\sigma_1)}\right)^{a_1}
\left(\frac{qx}{H}N^{f(\sigma_2)}\right)^{b_1}
\left(R^{4(1-\sigma_3)}\right)^{c_1}
M^{\sigma_1}N^{\sigma_2}R^{\sigma_3}\\
&+
\left(\frac{qx}{H}M^{f(\sigma_1)}\right)^{a_2}
\left(\frac{qx}{H}N^{f(\sigma_2)}\right)^{b_2}
\left(\frac{qx}{H}R^{24/5-8\sigma_3}\right)^{c_2}
M^{\sigma_1}N^{\sigma_2}R^{\sigma_3}\nonumber\\
=&
\left(\frac{qx}{H}\right)^{a_1+b_1}
M^{a_1f(\sigma_1)+\sigma_1}N^{b_1f(\sigma_2)+\sigma_2}R + \frac{qx}{H} M^{a_2f(\sigma_1)+\sigma_1}N^{b_2f(\sigma_2)+\sigma_2}R^{3/5}.\nonumber
\end{align*}
Consequently, it follows from the second estimate in \eqref{a-sigma} that
\begin{align*}
V \ll& x^{\eps/10} \Big(\Big(\frac{qx}{H}\Big)^{a_{1}+b_{1}}(MN)^{\frac{7-4a_{1}}{10}}R+\frac{qx}{H}(MN)^{\frac{7-4a_{2}}{10}}R^{3/5}\Big)\\
 \ll& x^{\eps/10} \Big(\Big(\frac{qx}{H}\Big)^{3/4}(MN)^{11/20}R
+\frac{qx}{H}(MN)^{21/40}R^{3/5} \Big)\\
\ll& q^{1/2} x^{1+\eps/10 } \Big( \frac{q^{1/4}x^{3/10}R^{9/20}}{H^{3/4}}+q^{1/2}x^{21/40}R^{3/40}H^{-1}\Big), 
\end{align*}
where in the last inequality we have employed the fact that $MNR\ll x$. Using the assumptions $q\ll x^{2/25+\eps }$, $R\ll x^{1/5}$ (from \eqref{uu}), and $H\geq x^{29/50+\eps }$, and rescaling $\eps$ if necessary, we obtain
\[
V \ll q^{1/2} x^{1+\eps/10} \big( x^{\frac1{50}+\frac3{10}+\frac9{100}-\frac{87}{200}+\eps } +x^{\frac1{25}+\frac{21}{40}+\frac3{200}-\frac{29}{50}-\eps/2 }\big)\ll q^{1/2}x^{1-\eps/10 },
\]
as desired.

We next assume that  either $R< \Big(\frac{qx}{H}\Big)^{1/3}$ or  $R> (qx/H)^{5/12}$. Let $g\geq 1$ be an integer to be specified later. We now take $c_{1}=(2g)^{-1}$, $a_{1}=b_{1}=\frac{1}{2}(1-c_{1})$, and $c_{2}=(6g)^{-1}$, $a_{2}=b_{2}=\frac{1}{2}(1-c_{2})$ and apply the estimates in \eqref{bound-m-small-beta} and \eqref{r-large},  to obtain 
\begin{align}\label{bb89}
x^{-\varepsilon}V \ll& \Big(\Big(\frac{qx}{H}\Big)M^{f(\sigma_{1})}\Big)^{a_{1}} \Big(\Big(\frac{qx}{H}\Big)N^{f(\sigma_{2})}\Big)^{b_{1}}(R^{g(2-2\sigma_{3})})^{c_{1}}M^{\sigma_{1}}N^{\sigma_{2}}R^{\sigma_{3}}\nonumber
\\
&+\Big(\Big(\frac{qx}{H}\Big)M^{f(\sigma_{1})}\Big)^{a_{2}} \Big(\Big(\frac{qx}{H}\Big)N^{f(\sigma_{2})}\Big)^{b_{2}}\Big(\frac{qx}{H}R^{g(4-6\sigma_{3})}\Big)^{c_{2}}M^{\sigma_{1}}N^{\sigma_{2}}R^{\sigma_{3}}\nonumber
\\
=&\Big(\frac{qx}{H}\Big)^{a_{1}+b_{1}} M^{a_1f(\sigma_{1})+\sigma_1}N^{b_1f(\sigma_2)+\sigma_{2}}R+\frac{qx}{H}M^{a_{2}f(\sigma_{1})+\sigma_1} N^{b_{2}f(\sigma_{2})+\sigma_2}R^{2/3}.
\end{align}
Consequently, it follows from \eqref{a-sigma} and the fact that $MNR\ll x$ that
\begin{align*}
V &\ll \Big(\frac{qx}{H}\Big)^{2a_{1}}(MN)^{\frac{7-4a_{1}}{10}}R+\Big(\frac{qx}{H}\Big)(MN)^{\frac{7-4a_{2}}{10}}R^{2/3}
\\
&\ll \Big(\frac{qx}{H}\Big)^{1-\frac{1}{2g}}(MN)^{\frac{1}{2}+\frac{1}{10g}}R+\Big(\frac{qx}{H}\Big)(MN)^{\frac{1}{2}+\frac{1}{30g}}R^{2/3}
\\
&\ll q^{1/2}x^{1+\eps } \Big(\frac{q^{1/2-1/2g} x^{1/2-2/5g} R^{1/2-1/10g}}{ H^{1-1/2g}} +\frac{q^{1/2} x^{1/2+1/30g } R^{1/6-1/30g}}{H}\Big),
\end{align*}
whence taking $g=3$ yields
\begin{align}\label{ttj}
V&\ll q^{1/2}x^{1+\eps }  \Big(\frac{q^{1/3}x^{11/30}R^{7/15}}{H^{5/6}}+q^{1/2}x^{23/45}R^{7/45}H^{-1}\Big).
\end{align}

We shall then assume first that $R^3<qx/H$ and insert the above bound into the previous estimate to obtain 
$$V\ll  q^{1/2}x^{1+\varepsilon}\Big(\frac{q^{22/45}x^{47/90}}{H^{89/90}}+q^{149/270}x^{76/135}H^{-142/135}\Big).$$ Consequently, by the assumption $q\leq x^{2/25+\eps}$, it transpires that 
$$V\ll  q^{1/2}x^{1+\varepsilon}\Big(\frac{x^{421/750+\eps}}{H^{89/90}}+x^{683/1125+\eps}H^{-142/135}\Big).$$
The preceding bound is acceptable whenever $H\geq \max( x^{1263/2225+3\eps},x^{2049/3550+3\eps}),$ as desired. Note that both of the above exponents are $<29/50$.

We then assume instead that $q\leq R^{12/5}Hx^{-1}$. It follows from (\ref{ttj}) and the restriction $R\ll x^{1/5}$ at the statement of the proposition that
\[V\ll q^{1/2}x^{1+\varepsilon}\Big(\frac{x^{1/30}R^{19/15}}{H^{1/2}}+\frac{x^{1/90}R^{61/45}}{H^{1/2}}\Big)
\ll q^{1/2}x^{1+\varepsilon}(x^{43/150}H^{-1/2}+x^{127/450}H^{-1/2}),
\]
which delivers the desired result once $H\geq x^{43/75+3\varepsilon}$ and $H\geq x^{127/225+3\varepsilon}$.

\subsection*{Case 1.2 (ii)}
We suppose next that $N \leq (qx/H)^{5/6}\leq M$. Under the latter assumption, equation \eqref{bound-m-small-beta} shows that for any real number $1/4\leq a\leq 1/2$ one has
\[
|\mathcal S|^a M^{\sigma_1} \ll 
\begin{cases}
\big(qx^{1+\frac{\eps}{100} }/H\big)^a M^{a(1-2\sigma_1)+\sigma_1} \qquad &\text{ if } \frac{qx}{H} \leq M^{4-4\sigma_1};	\\
\big(qx^{1+\frac{\eps}{100}}/H\big)^{a/2} M^{a(3-4\sigma_1) +\sigma_1} &\text{ otherwise}.
\end{cases}
\]
We thus conclude that
\begin{align}\label{a-sigma-m-large}
|\mathcal S|^a M^{\sigma_1} \ll x^{\frac{a\eps}{100}}  M^{1-a} (qx/H)^{3a/2-1/4}
\end{align}
uniformly for $1/2\leq \sigma_1 \leq 1$ and $1/4\leq a\leq 1/2$, where we essentially used the fact that the maximum is attained when $qx/H \asymp M^{4-4\sigma_1}$. Since $N\leq(qx/H)^{5/6}$, the inequality~\eqref{a-sigma} also holds with $M$ and $\sigma_1$ replaced by $N$ and $\sigma_2$, respectively.

We still assume first that $(qx/H)^{2/3} \leq R^2\leq (qx/H)^{5/6}$. A similar procedure as the one in Case 1.2 (i) would ultimately lead to a worse exponent for $H$, and hence one is forced to deploy a finer argument. Indeed, we note that whenever $f(\sigma)$ is defined as in \eqref{f-q-large}, one has for any real $c\in [1/8,1/4]$ the inequality
\[
2cf(\sigma) +\sigma \leq 2cf(7/10) +7/10 =\frac{7-8c}{10}.
\]
Therefore, we conclude from the bound \eqref{bound-r-2} corresponding to the $R$-polynomial, the estimate \eqref{a-sigma-m-large}
for the $M$-polynomial, the estimate \eqref{a-sigma} for the $N$-polynomial, and the choice of parameters $(a_1,b_1,c_1)=(1/2,1/4,1/4)$ whenever the first term in \eqref{bound-r-2} dominates, and $(a_2,b_2,c_2)=(1/2,3/8,1/8)$ whenever the second one dominates, that
\begin{align*}
x^{-\eps/10 }V \ll & \Big( \big(\frac{qx}{H}\big)^{h(\sigma_1)} M^{f(\sigma_1)}\Big)^{a_1}	\Big(\frac{qx}{H} N^{f(\sigma_2)}\Big)^{b_1}	 \Big( R^{4-4\sigma_3}\Big)^{c_1}	M^{\sigma_1}N^{\sigma_2} R^{\sigma_3}\\
&+ \Big( \big(\frac{qx}{H}\big)^{h(\sigma_1)} M^{f(\sigma_1)} \Big)^{a_2}	\Big( \frac{qx}{H} N^{f(\sigma_2)}\Big)^{b_2}	 \Big( \frac{qx}{H} R^{2f(\sigma_3)}\Big)^{c_2}	M^{\sigma_1}N^{\sigma_2} R^{\sigma_3}\\
\ll& \Big(\frac{qx}{H}\Big)^{\frac{3a_1}{2}-\frac{1}{4}+b_1} M^{1-a_1} N^{\frac{7-4b_1}{10}} R+\Big(\frac{qx}{H}\Big)^{\frac{3a_2}{2} -\frac{1}{4}+b_2+c_2}  M^{1-a_2} N^{\frac{7-4b_2}{10}} R^{\frac{7-8c_2}{10}}
\\
\ll& \Big(\frac{qx}{H}\Big)^{3/4} M^{1/2} N^{3/5} R+\Big(\frac{qx}{H}\Big)M^{1/2} N^{11/20} R^{3/5}.
\end{align*}

Since $MNR \ll x$, we now bound the first term employing the assumptions $q\leq x^{2/25+\eps }, R\ll x^{1/5}$ and $N \leq (qx/H)^{5/6}$. This then yields
\begin{align*}
V \ll & q^{1/2}x^{1+\eps/10 } \Big( \big(\frac{qx}{H^3}\big)^{1/4} N^{1/10} R^{1/2} +q^{1/2}H^{-1}x^{1/2}N^{1/20}R^{1/10}\Big)\nonumber\\
\ll& 	 q^{1/2}x^{1+\eps/10 } \Big( x^{23/50} H^{-5/6} +x^{\frac{27}{50}}H^{-1}N^{1/20}R^{1/10} \Big).
\end{align*}
The first term is acceptable provided that $H\gg x^{0.58+\eps }$. For the second one, one gets the desired bound whenever $N^{1/2}R\ll x^{2/5}$ and $H\gg x^{0.58+\eps}$. This completes the analysis of the present case, since the latter condition follows from \eqref{bb13}.

We now assume that either $R < \big(\frac{qx}{H}\big)^{1/3}$ or $R> \big(\frac{qx}{H}\big)^{5/12}$. In this case, we apply the estimate \eqref{r-large} to the $R$-polynomial. Arguing as in \eqref{bb89} but using \eqref{a-sigma-m-large} to estimate the $M$-polynomial and taking $c_{1}=(2g)^{-1}$, $a_{1}=b_{1}=\frac{1}{2}(1-c_{1})$, and $c_{2}=(6g)^{-1}$, $a_{2}=b_{2}=\frac{1}{2}(1-c_{2})$ gives
\[
V \ll x^{\eps/50} \Big( \Big(\frac{qx}{H}\Big)^{\frac{3a_1}{2}-\frac{1}{4}+b_1} M^{1-a_1} N^{\frac{7-4b_1}{10}} R+ \Big(\frac{qx}{H}\Big)^{\frac{3a_2}{2} -\frac{1}{4}+b_2+c_2}  M^{1-a_2} N^{\frac{7-4b_2}{10}} R^{2/3} \Big).
\]
Consequently,
\[
V\ll x^{\eps/50}\left(\Big(\frac{qx}{H}\Big)^{1-\frac{5}{8g}}
M^{\frac12+\frac{1}{4g}}
N^{\frac12+\frac{1}{10g}}R+
\Big(\frac{qx}{H}\Big)^{1-\frac{1}{24g}}
M^{\frac12+\frac{1}{12g}}
N^{\frac12+\frac{1}{30g}}R^{2/3}\right).
\]
Taking $g=3$ and recalling $MNR \ll x$, we then have
\begin{align}\label{f22}
V &\ll x^{\eps/50}\left(\Big(\frac{qx}{H}\Big)^{19/24}M^{7/12}N^{8/15}R + \Big(\frac{qx}{H}\Big)^{71/72}M^{19/36}N^{23/45}R^{2/3}\right)\nonumber\\
&\ll q^{\frac{1}{2}}x^{1+\eps/50}\left(
q^{\frac{7}{24}}x^{\frac{13}{40}}
M^{\frac{1}{20}}R^{\frac{7}{15}}H^{-\frac{19}{24}}+
q^{\frac{35}{72}}x^{\frac{179}{360}}
M^{\frac{1}{60}}R^{\frac{7}{45}}H^{-\frac{71}{72}}
\right).	
\end{align}

If $q\leq R^{12/5}H x^{-1}$, we have from the condition $M\ll H^{10} x^{-16/3-\eps }$ in \eqref{uu} and $R\ll x^{1/5}$ that
\begin{align*}
V&\ll q^{\frac{1}{2}}x^{1+\eps/50}\bigbrac{
R^{\frac{7}{6}}x^{\frac{1}{30}}M^{\frac{1}{20}}H^{-\frac{1}{2}}
+R^{\frac{119}{90}}x^{\frac{1}{90}}M^{\frac{1}{60}}H^{-\frac{1}{2}}}
\\
&\ll q^{\frac{1}{2}}x^{1+\eps/50}(x^{4/15}M^{1/20}H^{-1/2}+x^{62/225}M^{1/60}H^{-1/2})\ll q^{\frac{1}{2}}x^{1+\eps/50}\bigbrac{
x^{-\eps/20}+x^{\frac{14}{75}}H^{-\frac{1}{3}}},
\end{align*}
which is acceptable provided that $H\gg x^{14/25+\eps }$.

It only remains to handle the case when $R^3<qx/H$. Inserting the preceding condition into (\ref{f22}) and employing the restriction $q\ll x^{2/25+\eps},$ we note that
\begin{align*}
V\ll& q^{1/2}x^{1+\eps }\Big(  q^{161/360}x^{173/360}M^{1/20}H^{-341/360}
+q^{581/1080}x^{593/1080}M^{1/60}H^{-1121/1080}\Big)\\
\ll & q^{1/2}x^{1+2\eps }\Big( x^{1549/3000} M^{1/20} H^{-341/360} +x^{5329/9000} M^{1/60} H^{-1121/1080}\Big)
\\
\ll& q^{1/2}x^{1+2\eps }\Big( x^{749/3000}  H^{-161/360} +x^{4529/9000} H^{-941/1080}\Big)\ll q^{1/2}x^{1-\eps },
\end{align*}
where in the last inequalities we used the provisos $M\ll H^{10} x^{-16/3-\eps }$ and $H\gg x^{0.58 }$.

As we assumed that $N\leq M$, the case $M < (qx/H)^{5/6} \leq N$ would not occur, whence it only remains to examine the instance in which $ (qx/H)^{5/6}\leq \min(M, N)$.

\subsection*{Case 1.2 (iii)}
Suppose  that $ (qx/H)^{5/6}\leq M, N$. In such a situation, we merely apply Montgomery--Hal\'asz--Huxley's bound to the $R$-polynomial (equation \eqref{r-large}). We shall instead employ the estimate \eqref{a-sigma-m-large} for both the $N$ and the $M$-polynomial, and deduce by making the choices $c_{1}=(2g)^{-1}$, $a_{1}=b_{1}=\frac{1}{2}(1-c_{1})$, and $c_{2}=(6g)^{-1}$, $a_{2}=b_{2}=\frac{1}{2}(1-c_{2})$ that
\begin{align*}
x^{-\eps }V \ll & \Big( \Big(\frac{qx}{H}\Big)^{h(\sigma_1)} M^{f(\sigma_1)}\Big)^{a_1}	\Big( \Big(\frac{qx}{H}\Big)^{h(\sigma_2)}  N^{f(\sigma_2)}\Big)^{a_1}	 \Big( R^{2g(1-\sigma_3)}\Big)^{c_1}	M^{\sigma_1}N^{\sigma_2} R^{\sigma_3}\\
&+ \Big( \Big(\frac{qx}{H}\Big)^{h(\sigma_1)} M^{f(\sigma_1)} \Big)^{a_2}	\Big(  \Big(\frac{qx}{H}\Big)^{h(\sigma_2)}  N^{f(\sigma_2)}\Big)^{a_2}	 \Big( \frac{qx}{H} R^{g(4-6\sigma_3)}\Big)^{c_2}	M^{\sigma_1}N^{\sigma_2} R^{\sigma_3}\\
\ll & \Big( \frac{qx}{H}\Big)^{3a_1-1/2}(MN)^{1-a_1} R + \Big( \frac{qx}{H}\Big)^{3a_2-1/2+c_2}(MN)^{1-a_2}R^{2/3}.
\end{align*}
We next note that $q\leq N^{6/5}H/x\ll (MN)^{3/5}H/x$ since $N\leq M$ by assumption. Therefore, the preceding bound combined with the condition $MN\ll x/R$, the proviso $x^{4/25 } \ll R\ll x^{1/5}$ and the choice $g=3$ yields
\begin{align*}
V&\ll q^{1/2}x^\eps  \Big( q^{1/4} (x/H)^{3/4} (MN)^{7/12} R+ q^{17/36}(x/H)^{35/36}(MN)^{19/36} R^{2/3}\Big)\\
&\ll  q^{1/2}x^{1+\eps}\Big(x^{-1/2}(NM)^{11/15}RH^{-1/2}+ x^{-1/2}H^{-1/2}(NM)^{73/90}R^{2/3}\Big)
\\
&\ll  q^{1/2} x^{1+\eps } \Big(x^{7/30}R^{4/15} H^{-1/2} + x^{14/45}R^{-13/90} H^{-1/2}\Big)
\\
&\ll q^{1/2} x^{1+\eps } \Big(x^{43/150} H^{-1/2} + x^{36/125} H^{-1/2}\Big)\ll  q^{1/2} x^{1-\eps }
\end{align*}
provided that $H\geq x^{0.58 }$. This gives the desired bound.

\subsection*{Case 1.3 } If both the condition
$(\log x)^{O(1)} M^{2-2\sigma_1}\geq x^{\frac{\eps}{100}} \big(\frac{qx}{H}\big)^{h(\sigma_1)} M^{f(\sigma_1)}$ and its counterpart $(\log x)^{O(1)}N^{2-2\sigma_2} \leq x^{\frac{\eps}{100}} \big(\frac{qx}{H}\big)^{h(\sigma_2)} N^{f(\sigma_2)}$ hold.
Under these circumstances, we notice that the first term in  \eqref{bound-m-small-beta} dominates. Therefore, we have
\[
|\mathcal S| \ll (\log x)^{O(1)} M^{2-2\sigma_1};
\]
while for the $N$-polynomial, the classical bound \eqref{rr4} suffices, namely
\[
|\mathcal S|\ll\frac{qx^{1+\frac{\eps}{100}}}{H}N^{g(\sigma_2)},
\]
where $g(\sigma_2) = \min\set{1-2\sigma_2,4-6\sigma_2}$. It then follows from \cite[(7.2.17)]{Har} that for any real $1/6\leq b\leq 1/2$ we have
\[
bg(\sigma) +\sigma \leq bg(3/4) +3/4=\frac{3-2b}{4}.
\]
In an analogous manner as above, we employ \eqref{r-large} for the $R$-polynomial. Therefore, as in the proof of case 3 (ii) of \cite[Lemma 7.3]{Har}, we apply H\"older's inequality for the choices $a_{1}=\frac{1}{2}$, $b_{1}=\frac{1}{2}-\frac{1}{2g}$, $c_{1}=\frac{1}{2g}$ and $a_{2}=\frac{1}{2}$, $b_{2}=\frac{1}{2}-\frac{1}{6g}$, $c_{2}=\frac{1}{6g}$. This gives
\begin{align*}
V \ll & x^{\eps/10} \Big( MR (qx/H)^{\frac{1}{2}-\frac{1}{2g}} N^{\frac{1}{2}+\frac{1}{4g}} + MR^{2/3} (qx/H)^{\frac{1}{2}} N^{\frac{1}{2}+\frac{1}{12g}}\Big)\\
\ll & q^{1/2} x^{1+\eps/10 } \Big( (x/H)^{\frac{1}{2}-\frac{1}{2g}} N^{-\frac{1}{2}+\frac{1}{4g}} + x^{1/6}H^{-\frac{1}{2}} M^{1/3} N^{-\frac{1}{6}+\frac{1}{12g}} \Big),	
\end{align*}
where we employed the proviso $MNR\ll x$ in the second line and used the trivial bound $q^{-1/(2g)}\leq1$ in the first one. Now taking $g=3$ delivers
\[
V\ll q^{1/2} x^{1+\eps/10 } \Big( (x/H)^{1/3}N^{-5/12} + x^{1/6}M^{1/3} N^{-5/36}H^{-1/2} \Big).
\]
The first term is acceptable since, by \eqref{uu}, we have $N\gg (x/H)^{4/5+2\eps }$. For the second term, since $M\ll x/(NR)$, it transpires that
\[
M^{1/3}N^{-5/36}\ll x^{1/3} N^{-17/36} R^{-1/3}\ll x^{-22/225 }H^{17/45}\ll H^{1/2}x^{-1/6 -2\eps },
\]
where in the penultimate inequality we  used that $N\gg (x/H)^{4/5+2\eps }$ and $R\gg x^{4/25 }$ by \eqref{uu}, the last inequality in the above line holding provided that $H\gg x^{31/55+100\varepsilon}$. Therefore, we are done in this case.

\subsection*{Case 1.4 }If both the condition $(\log x)^{O(1)}M^{2-2\sigma_1} \leq x^{\frac{\eps}{100}} \big(\frac{qx}{H}\big)^{h(\sigma_1)} M^{f(\sigma_1)}$ and its counterpart $(\log x)^{O(1)}N^{2-2\sigma_2} \geq x^{\frac{\eps}{100}} \big(\frac{qx}{H}\big)^{h(\sigma_2)} N^{f(\sigma_2)}$ hold.
 This case is symmetrical to Case 1.3, whence no further details are thereby required.

This completes the estimation of \eqref{small-beta} for all $1/2\leq \sigma_1,\sigma_2,\sigma_3\leq 1$. We now turn to the complementary range $\beta \geq x(\log x)^{A/3}/H^2$.

\subsection*{Case 2. $x(\log x)^{A/3}/H^2\leq \beta$} In this context one has $F(\beta)=2\beta H$, and the goal becomes to prove \eqref{large-beta} in this range. We first note that the Diophantine approximation conditions in \eqref{sss} entail that $q\beta\leq x^{-2/25-\eps}$. Consequently, whenever $H\ll x^{32/55}$, one has
\[
(q\beta H)^{2/3} \leq H^{2/3} x^{-4/75 } \ll (x/H)^{4/5} \ll N\leq M.
\]
Thus, the lower-bound hypotheses on $M$ and $N$ required to apply \eqref{pp9} are satisfied. Arguing as in the preceding range, we apply Theorem \ref{thm:large-values} to the Dirichlet polynomials running over $m$, $n$ and $r$, the only change being that the assumption underlying \eqref{pp91} regarding the sizes of $N$ and $M$ is now $N,M>(q\beta H)^{5/6}$. More precisely, one gets analogous estimates to those in \eqref{bound-m-small-beta}-\eqref{bound-r-2}, namely
\begin{align} \label{bound-m-large-beta}
|\mathcal S| \ll (\log x)^{O(1)}M^{2-2\sigma_1}  + x^{\frac{\eps}{100}} (q\beta H)^{h(\sigma_1)}M^{f(\sigma_1)},	
\end{align}
where if $M\leq (q\beta H)^{5/6}$ then   $f(\sigma_1)$ and $h(\sigma_1)$ are defined as in \eqref{f-q-large}; and otherwise 
\[
f(\sigma_1)=
\begin{cases}
1-2\sigma_1, & q\beta H\leq M^{4-4\sigma_1},\\
3-4\sigma_1, & \text{else},
\end{cases}
\qquad
h(\sigma_1)=
\begin{cases}
1, & q\beta H\leq M^{4-4\sigma_1},\\
1/2, & \text{else}.
\end{cases}
\]
The same argument applied to the $N$-polynomial gives
\begin{align} \label{bound-n-large-beta}
|\mathcal S| \ll  (\log x)^{O(1)}N^{2-2\sigma_2} + x^{\frac{\eps}{100}} (q\beta H)^{h(\sigma_2)}N^{f(\sigma_2)}.	
\end{align}
For the $R$-polynomial, the Montgomery--Hal\'asz--Huxley's large value estimate (\ref{rr4}) delivers,  for every positive integer $g$, the bound
\begin{align}\label{r-large-beta-large}
|\mathcal S| \ll x^{\frac{\eps}{100}} (R^{2g(1-\sigma_3)} +q\beta H R^{g(4-6\sigma_3)}).
\end{align}
Moreover, if 
$$(q\beta H)^{2/3}\leq R^{g}\leq (q\beta H)^{5/6}$$ then one may apply Theorem \ref{thm:large-values} to deduce that
\begin{equation}\label{bound-r-2-beta-large}
 \lvert \mathcal{S}\rvert\ll x^{\frac{\eps}{100}}\Big(R^{2g(1-\sigma_{3})}+q\beta HR^{gf(\sigma_{3})}\Big),\end{equation}
where $f(\sigma_3)$ is the piecewise-linear function defined in \eqref{f-q-large}. 
 
As before, set $V=\sum_{q_0q_1=q } d_{6k+3}(q_{0})\lvert \mathcal{S}\rvert M^{\sigma_{1}}N^{\sigma_{2}}R^{\sigma_{3}}$. To prove \eqref{large-beta}, it is enough to show that, uniformly for $1/2\leq \sigma_1,\sigma_2,\sigma_3\leq 1$, one has
\[
  V=\sum_{q_0q_1=q } d_{6k+3}(q_{0})\lvert \mathcal{S}\rvert M^{\sigma_{1}}N^{\sigma_{2}}R^{\sigma_{3}}\ll (q\beta x)^{1/2} H (\log x)^{-A}.
\] 
In an analogous manner, we divide the proof into four cases, according to whether the diagonal term dominates in each of \eqref{bound-m-large-beta} and \eqref{bound-n-large-beta}.

\subsection*{Case 2.1} If both the bound $(\log x)^{O(1)}M^{2-2\sigma_1}  \geq x^{\frac{\eps}{100}} (q\beta H)^{h(\sigma_1)}M^{f(\sigma_1)}$ and its counterpart $(\log x)^{O(1)} N^{2-2\sigma_2}  \geq  x^{\frac{\eps}{100}} (q\beta H)^{h(\sigma_2)}N^{f(\sigma_2)}$ hold.
Suppose first that $q\geq(\log x)^{3A}$. Then, combining \eqref{bound-m-large-beta} and \eqref{bound-n-large-beta} with the Cauchy-Schwarz inequality, we obtain

\begin{align*}
 V& \ll (\log x)^{O(1)}  \sum_{q_0q_1=q }d_{6k+3}(q_{0}) \big(M^{2-2\sigma_1} N^{2-2\sigma_2}\big)^{1/2} M^{\sigma_1} N^{\sigma_2} R
\\
&\ll (\log x)^{O(1)}x \sum_{q_0q_1=q } \frac{ d_{6k+3}(q_{0})}{q_0} \ll q^{1/2}x(\log x)^{-A}\ll \frac{(q\beta x)^{\frac12}H }{ (\log x)^A},
\end{align*}
where in the last step we used the assumption $\beta \geq x(\log x)^{A/3}/H^2$. This proves the desired estimate in this case.

It remains to consider  $1\leq q\leq (\log x)^{3A}$.  In this range, the pointwise estimate \eqref{pointwise-r} provides an arbitrarily large logarithmic saving for the $R$-polynomial. Therefore, arguing as in Case 1.1 we obtain
\[
V\ll q^{1/2} x(\log x)^{-3A/2+O(1)} \ll (q\beta x)^{1/2}H (\log x)^{-A},
\]
where we used the condition $\beta \geq x(\log x)^{A/3}/H^2$.

\subsection*{Case 2.2} If both the bound $(\log x)^{O(1)}M^{2-2\sigma_1}  \leq x^{\frac{\eps}{100}} (q\beta H)^{h(\sigma_1)}M^{f(\sigma_1)}$ and its counterpart $(\log x)^{O(1)}N^{2-2\sigma_2}  \leq  x^{\frac{\eps}{100}} (q\beta H)^{h(\sigma_2)}N^{f(\sigma_2)}$ hold.  
\subsection*{Case 2.2 (i)} Suppose that $M,N \leq (q\beta H)^{5/6}$. In this case the functions $f(\sigma)$ and $h(\sigma)$ in estimates \eqref{bound-m-large-beta} and \eqref{bound-n-large-beta} are as in \eqref{f-q-large}. Therefore, in a similar manner as in \eqref{a-sigma}, one has for any real number  $1/4\leq a\leq 1/2$ the bound
\begin{align}\label{a-sigma-large-beta}
|\mathcal S|^a M^{\sigma_{1}} \ll x^{\frac{a\eps}{100}} (q\beta H)^a M^{af(\sigma_1)+\sigma_1}\ll x^{\frac{a\eps}{100}} (q\beta H)^{a}M^{\frac{7-4a}{10}}
\end{align}
for arbitrary $1/2\leq \sigma_1,\sigma_2\leq 1$. An analogous estimate holds as well for $N$.

As in the proof of Case 1.2 (i) we first assume that $ (q\beta H)^{2/3}\leq R^{2}\leq (q\beta H)^{5/6}$. It then follows from \eqref{bound-r-2-beta-large} that
\[
|\mathcal S| \ll x^{\frac{\eps}{100}} \big( R^{4-4\sigma_3} + q\beta H R^{24/5-8\sigma_3}\big).
\]
Similarly as therein, we take $a_1=b_1=3/8,c_1=1/4$, and $a_2=b_2=7/16$, $c_2=1/8$ and deduce from the above estimates and  $MNR\ll x$ that
\begin{align*}
V \ll & x^{\eps/10 } \Big( (q\beta H)^{2a_1} M^{a_1f(\sigma_1)+\sigma_1}N^{a_1 f(\sigma_2)+\sigma_2}R + q\beta H M^{a_2f(\sigma_1)+\sigma_1}N^{a_2 f(\sigma_2)+\sigma_2} R^{3/5}\Big)\\
\ll & x^{\eps/10 } \big( (q\beta H)^{3/4} (MN)^{11/20} R+ q\beta H (MN)^{21/40} R^{3/5}\big)\\
\ll &  x^{\eps/10} (q\beta x)^{1/2} H\big( (q\beta)^{1/4} x^{1/20} R^{9/20} H^{-1/4} +(q\beta)^{1/2} x^{1/40} R^{3/40}\big).	
\end{align*}
We now use the assumptions $q\beta \ll x^{-2/25-\eps }$ and $R\ll x^{1/5}$ in conjunction with the condition $H\gg x^{0.58+\eps}$ to obtain
\[
V \ll x^{\eps/10} (q\beta x)^{1/2} H \big(x^{3/25-\eps/4 } H^{-1/4} +x^{-\eps/2}\big) \ll  x^{-\eps/10} (q\beta x)^{1/2} H,
\]
as desired.

We shall next consider the case in which either $R< (q\beta H)^{1/3}$ or $R> (q\beta H)^{5/12}$. Taking $c_1=1/6$, $a_1=5/12$ and $c_2=1/18$, $a_2=17/36$, we have by \eqref{bound-r-2-beta-large} for the choice $g=3$ and $MNR\ll x$  that
\begin{align}\label{ec135}
V &\ll x^\eps \Big( (q\beta H)^{2a_1} M^{a_1f(\sigma_1)+\sigma_1}N^{a_1 f(\sigma_2)+\sigma_2} R +q\beta H M^{a_2f(\sigma_1)+\sigma_1}N^{a_2 f(\sigma_2)+\sigma_2} R^{2/3}\Big)\nonumber\\
&\ll x^\eps \Big((q\beta H)^{2a_{1}}(MN)^{\frac{7-4a_{1}}{10}}R+(q\beta H)(MN)^{\frac{7-4a_{2}}{10}}R^{2/3}\Big)\nonumber
\\
&\ll  x^\eps  \big((q\beta H)^{5/6} (MN)^{8/15} R+ q\beta H (MN)^{23/45} R^{2/3}\big)\nonumber\\
&\ll x^\eps (q\beta x)^{1/2} H \big( (q\beta)^{1/3} x^{1/30} R^{7/15} H^{-1/6} +(q\beta)^{1/2} x^{1/90} R^{7/45}\big).
\end{align}
Suppose first that $R^3\leq q\beta H$. One deduces by this assumption and $q\beta \ll x^{-2/25-\eps }$ that
\begin{align*}
V \ll &x^{\eps} (q\beta x)^{1/2} H \big((q\beta)^{22/45}x^{1/30}H^{-1/90}+(q\beta)^{149/270}x^{1/90}H^{7/135}\big)\\
&\ll x^{\eps} (q\beta x)^{1/2} H \big( x^{-2\eps  }+ x^{-223/6750  }H^{7/135}\big) \ll x^{-\eps/2 } (q\beta x)^{1/2} H 
\end{align*}
provided that $H\ll x^{223/350-100\eps}$, which holds in view of the restriction $H\ll x^{32/55}$.

On the other hand, suppose that $R > (q\beta H)^{5/12}$. Then $q\beta < R^{12/5}/H$, whence inserting this inequality and the assumption $R\ll x^{1/5}$ in (\ref{ec135}) we find that
\begin{align*}
V&\ll x^\eps (q\beta x)^{1/2} H (x^{1/30}R^{19/15}H^{-1/2}+x^{1/90}R^{61/45}H^{-1/2})
\\
&\ll x^\eps (q\beta x)^{1/2} H (x^{43/150} H^{-1/2} + x^{127/450}H^{-1/2} ).
\end{align*}
The preceding bound is acceptable whenever $H\gg x^{43/75+3\eps }$, as desired.

\subsection*{Case 2.2 (ii)} Suppose that $N \leq (q\beta H)^{5/6} \leq M$. Since $M\geq (q\beta H)^{5/6}$, one may apply \eqref{bound-m-large-beta} to the $M$-polynomial. Arguing as in \eqref{a-sigma-m-large}, we thus obtain
\begin{align}\label{sigma-m-ll}
	|\mathcal S|^a M^{\sigma_1} \ll x^{\frac{a\eps}{100}} M^{1-a} (q\beta H)^{\frac{3a}{2}-\frac{1}{4}}
\end{align}
 uniformly for  $1/2\leq \sigma_1\leq 1$ and $1/4\leq a\leq 1/2$.
 
 When $(q\beta H)^{2/3} \leq R^2 \leq (q\beta H)^{5/6}$, we make the choices $b_1=c_1=1/4, a_1=1/2$, and $a_2=1/2, b_2=3/8$ and $c_2=1/8$. Employing \eqref{sigma-m-ll} for the $M$-polynomial, the analogue of \eqref{a-sigma-large-beta} for the $N$-polynomial, and \eqref{bound-r-2-beta-large} with $g=2$ for the $R$-polynomial, and using the inequality (\ref{eq*}) and $MNR\ll x$, we obtain 
 \begin{align*}
V\ll& x^{\eps/4} \big((q\beta H)^{\frac{3a_1}{2}-\frac{1}{4}+b_1} M^{1-a_1} N^{\frac{7-4b_1}{10}} R+(q\beta H)^{\frac{3a_2}{2}-\frac{1}{4}+b_2+c_2} M^{1-a_2} N^{\frac{7-4b_2}{10}} R^{\frac{7-8c_2}{10}}\big)\\
\ll& x^{\eps/4} \big((q\beta H)^{3/4}M^{1/2} N^{3/5} R + q\beta H M^{1/2}N^{11/20} R^{3/5}\big)\\
\ll & x^{\eps/4} (q\beta x)^{1/2} H \big((q\beta)^{1/4} H^{-1/4}N^{1/10} R^{1/2} +(q\beta)^{1/2} N^{1/20} R^{1/10}\big). 
\end{align*}
We then bound the first term using the assumptions $N \ll (q\beta H)^{5/6}$, $q\beta \ll x^{-2/25-\eps}$ and  $R\ll x^{1/5}$; and the second one employing $q\beta \ll x^{-2/25-\eps }$. This leads to 
\begin{align*}
V&\ll x^{\eps/4} (q\beta x)^{1/2} H \big((q\beta)^{1/3} H^{-1/6}x^{1/10} +(q\beta)^{1/2} N^{1/20} R^{1/10}\big)
\\
&\ll x^{\eps/4}   (q\beta x)^{1/2} H \big(x^{11/150}H^{-1/6}+x^{-1/25-\varepsilon/2}N^{1/20}R^{1/10}\big),
\end{align*}
the first term being acceptable if $H\gg x^{1/2}$, and the second one being of the required size whenever $N^{1/2}R\ll x^{2/5}$, which was assumed in \eqref{bb13}.

If instead either $R< (q\beta H)^{1/3}$ or $R>(q\beta H)^{5/12}$, arguing as in Case 2.2 (i), we shall apply \eqref{bound-r-2-beta-large} with $g=3$ to the $R$-polynomial. Therefore, the choices $(a_1,b_1,c_1)=(5/12,5/12,1/6)$ and $(a_2,b_2,c_2)=(17/36,17/36,1/18)$ combined with the condition $MNR\ll x$ deliver
\begin{align}\label{ec72}
V \ll & x^{\eps/50} \big( (q\beta H)^{\frac{3a_1}{2}-\frac{1}{4}+b_1} M^{1-a_1} N^{\frac{7-4b_1}{10}} R+(q\beta H)^{ \frac{3a_2}{2}-\frac{1}{4}+b_2+c_2} M^{1-a_2} N^{\frac{7-4b_2}{10}} R^{2/3} \big)\nonumber\\
\ll& x^{\eps/50} \big((q\beta H)^{19/24} M^{7/12}N^{8/15}R+(q\beta H)^{71/72}M^{19/36}N^{23/45}R^{2/3}\big)\nonumber\\
\ll& x^{\eps/50} (q\beta x)^{\frac12} H \big( (q\beta)^{\frac{7}{24}} H^{-\frac{5}{24}} x^{\frac{1}{30}} M^{\frac{1}{20}} R^{\frac{7}{15}}
+(q\beta)^{\frac{35}{72}} H^{-\frac{1}{72}} x^{\frac{1}{90}} M^{\frac{1}{60}} R^{\frac{7}{45}} \big).	
\end{align}

We suppose that $q\beta H< R^{12/5}$ first. It then transpires by the above equation and the assumption $R\ll x^{1/5}$ that
\begin{align*}
V \ll& x^{\eps/50} (q\beta x)^{1/2} H \big( x^{1/30} M^{1/20} R^{7/6} H^{-1/2} + x^{1/90} M^{1/60} R^{119/90} H^{-1/2}\big)\\
\ll&  x^{\eps/50} (q\beta x)^{1/2} H \big( x^{4/15} M^{1/20} H^{-1/2} + x^{62/225} M^{1/60} H^{-1/2}\big)\\
\ll & x^{\eps/50} (q\beta x)^{1/2} H \big( x^{-\eps/20} + x^{14/75}  H^{-1/3}\big)\ll x^{-\eps/50} (q\beta x)^{1/2} H,
\end{align*}
the last inequalities following provided that $M\ll H^{10} x^{-16/3-\eps}$, which is assumed in \eqref{uu}, and $H\gg x^{0.58+\eps }$.

If instead $R< (q\beta H)^{1/3}$, we insert the condition $q\beta \ll x^{-2/25-\eps }$ into (\ref{ec72}) to derive 
\begin{align*}
V&\ll  x^\eps (q\beta x)^{1/2} H \big(  (q\beta )^{161/360} H^{-19/360} x^{1/30} M^{1/20}  +(q\beta)^{581/1080} H^{41/1080} x^{1/90} M^{1/60} \big)
\\
&\ll  x^\eps (q\beta x)^{1/2} H \big( x^{-11/4500}M^{\frac{1}{20}}H^{-19/360} +x^{-\frac{431}{13500}}M^{1/60}H^{41/1080} \big).
\end{align*}
This is acceptable when $M\ll H^{10}x^{-16/3-\eps }$ and $x^{0.58+\eps }\leq H \leq x^{0.59}$, since then 
\begin{align*}
V&\ll  x^\eps (q\beta x)^{1/2} H \big( x^{-1211/4500}H^{161/360} +x^{-\frac{1631}{13500}}H^{221/1080} \big)\ll x^{-\eps} (q\beta x)^{1/2} H,
\end{align*}
as desired.
\subsection*{Case 2.2 (iii)} Suppose that $M,N \geq (q\beta H)^{5/6}$. We shall apply the inequality \eqref{sigma-m-ll} to both the M-polynomial and the $N$-polynomial. Arguing as in Case 1.2 (iii), we also apply \eqref{r-large-beta-large} with $g=3$ to the $R$-polynomial. It then follows by making the choices $(a_1,b_1,c_1)=(5/12,5/12,1/6)$ and $(a_2,b_2,c_2)=(17/36,17/36,1/18)$ that
\begin{align*}
V\ll &x^{\eps } \big( (q\beta H)^{3a_1-1/2} (MN)^{1-a_1} R +(q\beta H)^{3a_2-1/2+c_2} (MN)^{1-a_2} R^{2/3}\big)	\\
\ll & x^\eps (q\beta H)^{1/2}  \big( (q\beta H)^{1/4} (MN)^{7/12} R +(q\beta H)^{17/36}  (MN)^{19/36} R^{2/3}\big).
\end{align*}
We now remind the reader that $N\leq M$ and deduce from the bound $q\beta H\ll N^{6/5} \ll (MN)^{3/5}$ and $MNR\ll x$ that
\begin{align*}
V&\ll x^{\eps}(q\beta x)^{1/2}H\Big(x^{-1/2}H^{-1/2}(NM)^{11/15}R+ x^{-1/2}H^{-1/2}(NM)^{73/90}R^{2/3}\Big)
\\
&\ll x^\eps (q\beta x)^{1/2}H\big( x^{7/30} R^{4/15}H^{-1/2} +x^{14/45} R^{-13/90}H^{-1/2}\big).
\end{align*}
Recalling that $x^{4/25} \leq R\ll x^{1/5}$, one gets
\[
x^{7/30} R^{4/15} +x^{14/45} R^{-13/90} \ll x^{43/150}+x^{36/125}\ll H^{1/2}x^{-2\varepsilon}
\]
provided that $H\gg x^{72/125+8\eps}$. This gives the desired bound as well.

\subsection*{Case 2.3} If both the bound 
$(\log x)^{O(1)}M^{2-2\sigma_1}  \geq x^{\frac{\eps}{100}} (q\beta H)^{h(\sigma_1)}M^{f(\sigma_1)}$ and its counterpart $(\log x)^{O(1)}N^{2-2\sigma_2}  \leq  x^{\frac{\eps}{100}} (q\beta H)^{h(\sigma_2)}N^{f(\sigma_2)}$ hold.
 For the $M$-polynomial, we have from \eqref{bound-m-large-beta} that
\[
|\mathcal S| \ll x^{\frac{\eps}{100}} M^{2-2\sigma_1},
\]
and for the $N$-polynomial we shall merely utilise \eqref{rr4}, which entails that
\[
|\mathcal S| \ll x^{\frac{\eps}{100}} q\beta H N^{\min\set{1-2\sigma_2,4-6\sigma_2}}.
\] 
The reader may note from Case 1.3 that for any real $1/6\leq b\leq 1/2$  and $1/2\leq \sigma_2\leq 1$ the following inequality 
\[
|\mathcal S|^b N^{\sigma_2}\ll x^{\frac{b\eps}{100}} (q\beta H)^b N^{b\min\set{1-2\sigma_2,4-6\sigma_2}+\sigma_2} \ll x^{\frac{b\eps}{100}} (q\beta H)^b N^{\frac{3-2b}{4}}
\] 
holds. We shall also apply \eqref{r-large-beta-large} with $g=3$ to the $R$-polynomial. The choices $(a_1,b_1,c_1)=(1/2,1/3,1/6)$ and $(a_2,b_2,c_2)=(1/2,4/9,1/18)$ in conjunction with the fact that $MNR\ll x$ then yield
\begin{align*}
V &\ll x^{\eps/10} \big( (q\beta H)^{b_1} M N^{\frac{3-2b_1}{4}} R +(q\beta H)^{b_1+c_1} M N^{\frac{3-2b_2}{4}} R^{2/3} \big)\\
&\ll x^{\eps/10}(q\beta x)^{1/2}H\big(x^{1/2}(q\beta)^{-1/6} H^{-2/3} N^{-5/12} +x^{1/2}H^{-1/2}N^{-17/36} R^{-1/3} \big).
\end{align*}
Using the proviso $\beta \geq x/H^2$ (as we are in Case 2) and $N\gg (x/H)^{4/5+2\eps }$ combined with $H\leq x^{3/5}$, one deduces that
\[
x^{1/2}(q\beta)^{-1/6} H^{-2/3} N^{-5/12} \ll x^{1/2} x^{-1/6} H^{1/3} H^{-2/3} (x/H)^{-1/3-5\eps/6 }\ll x^{-\eps/3}
\]
which is acceptable. For the second term, using the aditional restraint $R\gg x^{4/25}$, one has
\[
x^{1/2}H^{-1/2}N^{-17/36} R^{-1/3} \ll (x/H)^{1/2-17/45} x^{-4/75}\ll x^{31/450} H^{-11/90}\ll x^{-2\eps }
\]
provided that $H\gg x^{31/55+90\eps }$. Therefore, the desired bound also follows in this case.

\subsection*{Case 2.4} If both the condition
$(\log x)^{O(1)}M^{2-2\sigma_1}  \leq x^{\frac{\eps}{100}} (q\beta H)^{h(\sigma_1)}M^{f(\sigma_1)}$ and its counterpart $(\log x)^{O(1)}N^{2-2\sigma_2}  \geq  x^{\frac{\eps}{100}} (q\beta H)^{h(\sigma_2)}N^{f(\sigma_2)}$ hold, the situation is symmetrical to the preceding Case 2.3, whence no further details are required.

\end{proof}


\section{Proof of Theorem~\ref{thm:mobius}}\label{sec9}

With these preparations in place, we now complete the proof of Theorem~\ref{thm:mobius}.

\begin{proof}[Proof of Theorem~\ref{thm:mobius}]

We first claim that it suffices to prove the estimate for the range $x^{0.58+\eps }\leq H\leq x^{32/55 }$. Indeed, when $H\geq x^{32/55}$, we may decompose $(x,x+H]$ into intervals of length $\Theta(x^{0.58+\eps })$ and apply the preceding claim to each of the former intervals. Moreover, Proposition \ref{prop:major-arc} provides quasipolynomial cancellation for the exponential sum on the major arcs. We may therefore restrict our attention to the minor arcs, where it suffices to prove that
\[
\sup_{\alpha \in \mathfrak m} \Big|\sum_{x<n \leq x+H} \mu(n) e(n\alpha)\Big| \ll H(\log x)^{-1/3+\eps }.
\]
In light of Lemma \ref{lem:reduce-to-mn}, it further suffices to show that
\begin{align}\label{mn-bound-a}
\sup_{\alpha \in \mathfrak m} \Big|\sum_{x<n \leq x+H} \mu_{\boldsymbol N}(n) e(n\alpha)\Big| \ll H(\log x)^{-A/100 },
\end{align} where $\mu_{\boldsymbol N}$ is defined in \eqref{mobius-n}. 


Let $k=20$ in \eqref{mobius-n} and write $N_i= x^{\gamma_i}$ for $1\leq i\leq 39$, $R=x^{\gamma_{40}}$ and $P_1=x^{\gamma_{41}}$.  Whenever $(N_i,2N_i]$ is the supporting range of a M\"obius factor, the construction in \eqref{mobius-n} gives $\gamma_i\leq1/20$. Thus,
\begin{align}\label{cond-gamma}
\gamma_i\geq 0,\qquad	\sum_{1\leq i\leq 41} \gamma_i=1+O\Big(\frac{1}{\log x}\Big), \qquad \text{and } \gamma_i\leq 1/20 \quad\text{ when } 20\leq i\leq 41.
\end{align}
 Set $H=x^\theta$. Lemmas \ref{lem:type-ii}--\ref{lem:type-iii} show that \eqref{mn-bound-a}  holds uniformly for $x^{0.58+\eps }\leq H \leq x^{32/55}$ whenever at least one of the following four conditions is satisfied:
\begin{enumerate}
	\item[Case 1] (Type I sum). There exists some $\gamma_i$ such that $\gamma_i>1-\theta$.
	\item[Case 2] (Type I$_2$ sum). There exist $\gamma_i\neq \gamma_j$ such that $\gamma_i+\gamma_j \geq 4(1-\theta)/3$.
	\item[Case 3] (Type II sum). There exists a  partition $\set{1,\dots,41}=I\sqcup J$ such that 
    \[
	1-\theta <\sum_{i\in I} \gamma_i,\sum_{j\in J} \gamma_j<\theta.
	\]
	\item[Case 4] (Type III sum). There exists a  partition $\set{1,\dots,41}=I\sqcup J \sqcup K$ such that
	\begin{align}\label{con1}
	 4(1-\theta)/5< \sum_{j\in J} \gamma_j \leq \sum_{i\in I} \gamma_i &< 10\theta -16/3;\\ \label{con2}
	 4/25 \leq \sum_{k\in K}\gamma_k &\leq 1/5;\\
	\text{and}\quad \frac{1}{2}\sum_{j\in J}\gamma_j+\sum_{k\in K} \gamma_k&\leq 2/5. \label{con3}
			\end{align}
	
\end{enumerate}
It therefore remains to prove that every sequence  $\set{\gamma_1,\dots,\gamma_{41}}$ satisfying \eqref{cond-gamma} falls into one of these four cases. We may assume that Cases 1, 2, and 3 do not occur. We shall prove that Case 4 must then hold. For the sake of simpliciy we also assume that the sum in (\ref{cond-gamma}) is $1$, since the extra error term $O(1/\log x)$ would only add bounded constants when raising $x$ to such terms. 

For any partition of the index set $\set{\gamma_1,\dots,\gamma_{41}}$ into two disjoint subsets $\mathcal I$ and $\mathcal J$, write  $\gamma_I$ and $\gamma_J$ as the summation over the elements in $\mathcal I$ and $\mathcal J$ respectively, and label the two subsets so that $\gamma_I\leq\gamma_J$. Since Case 3 does not occur, we have
 \[
 \gamma_I = \min\set{\gamma_I,\gamma_J} <1-\theta.
 \]
Choose a partition $(\mathcal I,\mathcal J)$ for which the discrepancy $\gamma_J-\gamma_I$ is minimal, and set  $d=\gamma_J-\gamma_I$. Clearly,
\begin{align}\label{d-lower-bd}
d= 1-2\gamma_I >2\theta -1.
\end{align}	

Since neither Case 1 nor Case 2 occur and by our assumption $\gamma_J>\theta >4(1-\theta)/3$ provided that $\theta>4/7$, we thereby have $|\mathcal J|\geq3$. We claim that, in fact, $|\mathcal J|=3$. Let $\gamma'$ be the smallest element in $\mathcal J$. Indeed, if we transfer $\gamma'$ from $\mathcal J$ to $\mathcal I$, the minimality of the discrepancy gives
\[
|(\gamma_J-\gamma') -(\gamma_I +\gamma')| =|d-2\gamma'|\geq d.
\]
Since $\gamma'>0$, this forces
\begin{align}\label{gamma'-lower}
\gamma'\geq d.	
\end{align} 
Meanwhile, as $(\mathcal I,\mathcal J)$ is the partition so that the discrepancy $\gamma_J-\gamma_I$ is minimal and $\gamma_I<1-\theta$, we must have 
\begin{equation}\label{dfd}
\gamma_J-\gamma'\leq \gamma_I <1-\theta.\end{equation}
Now if  $|\mathcal J|\geq 4$ it is then apparent from the above inequality, the assumption on $\gamma'$, and \eqref{d-lower-bd}-\eqref{gamma'-lower} that 
\[
6\theta -3 <3\gamma' <\gamma_J-\gamma'<1-\theta.
\]
This leads to $\theta\leq 4/7$, which contradicts the proviso $\theta>0.58$. Consequently, $|\mathcal J|=3$.

 Moreover, we also have from \eqref{d-lower-bd} and \eqref{gamma'-lower} that
\[
1-\gamma_I =\gamma_J \geq 3\gamma' \geq 3(1-2\gamma_I).
\]
This gives 
\begin{align}\label{gamma-i}
	\gamma_I \geq 2/5 \qquad \text{and } \qquad \gamma'\leq 1/5.
\end{align}

Let $K=\{\gamma'\}$, and write  $I$ for the set of elements in $\mathcal I$ and $J$ for the set of elements in $\mathcal J/\set{\gamma'}$. It remains to verify that $(I,J,K)$ satisfies the conditions of Case 4. Firstly,
\[
\sum_{i\in I}\gamma_i =\gamma_I <1-\theta < 10\theta -16/3,
\]
where the last inequality holds provided that $\theta >19/33$. Next,  it follows from (\ref{dfd}) that
\begin{align*}
\sum_{i\in J}\gamma_j=\gamma_J-\gamma' \leq \gamma_I=\sum_{i\in I}\gamma_i.	
\end{align*}
Furthermore, using both (\ref{gamma-i}) and the bound $\gamma_{J}\geq \theta$ we also have 
\[
\sum_{j\in J}\gamma_j = \gamma_J-\gamma'\geq \theta -1/5 >\frac{4(1-\theta)}{5}
\]
provided that $\theta>5/9$. The above three inequalities give (\ref{con1}). Moreover, it transpires from \eqref{d-lower-bd} and \eqref{gamma'-lower} that
\[
\gamma' \geq 1-2\gamma_I \geq 1-2(1-\theta) =2\theta -1> 4/25
\]
whenever $\theta>0.58$. Combining this with the upper bound given in \eqref{gamma-i} delivers the condition (\ref{con2}), it only remaining to verify ~\eqref{con3}. Indeed, employing \eqref{gamma-i} enables one to deduce that
\[
\frac{\sum_{j\in J}\gamma_j}{2}+\gamma' = \frac{1-\gamma_I+\gamma' }{2} \leq \frac{1-2/5+1/5}{2} =2/5.
\]
Consequently, all the conditions of Case 4 are satisfied, thereby completing the proof.
\end{proof}

The reader may observe that in the above argument, whenever we wrote $<$ in equation (\ref{con1}) and each of the other cases, we could have added an extra $\varepsilon>0$ summand on the left side, this having been deliberately avoided for the sake of concission. Therefore, the assumptions in either Lemma \ref{lem:type-ii}, \ref{lem:typei}, \ref{lem:typei2} or \ref{lem:type-iii} are met.


\section{Proof of Theorem~\ref{thm:mangoldt}}\label{sec10}

The proof of Theorem~\ref{thm:mangoldt} follows essentially the same strategy as that of Theorem~\ref{thm:mobius}. We therefore give only a brief outline, highlighting the differences. Applying Heath-Brown's identity~\cite[(13.37)]{IK} with $K=20$ and following the argument leading to Lemma~\ref{lem:reduce-to-mn}, we obtain
\[
\bigabs{\sum_{x<n \leq x+H} \Lambda(n) e(n\alpha)} \ll (\log x)^{40} \sup_{\boldsymbol N} \bigabs{\sum_{x<n \leq x+H} \Lambda_{\boldsymbol N} (n) e(n\alpha)},
\]
where $$\Lambda_{\boldsymbol N} (n) =\twosum{n=n_1\cdots n_{40}}{n_i\in (N_i,2N_i]} a_1(n_1) \cdots a_{40}(n_{40}),$$ with $N_1\cdots N_{40} \asymp x$, $a_i\in \set{1,\mu, \log \cdot}$, and $N_i\ll x^{1/20}$ for each $a_i=\mu$. Therefore, an analogous argument to that in Lemma~\ref{lem: produ-diri-poly} would assert that
\[
D_{\Lambda_{\boldsymbol N}}(s,\chi) = \sum_{q_1'q_2\cdots q_{2k}=q_0}F_{\mathbf q,\Lambda_{\boldsymbol N}}(s,\chi),
\]
where $D_{\Lambda_{\boldsymbol N}}(s,\chi)$ is defined as in~\eqref{eq:f-dircht-poly-exp-sum} and
\[
F_{\mathbf{q},\Lambda_{\boldsymbol N}}(s,\chi)=N_{1,\mathbf{q}}(s,\chi)\cdots N_{2k,\mathbf{q}}(s,\chi).
\]
The only substantial difference from the corresponding decomposition for the M\"obius function is that the coefficients $a_i$ for which $N_i\gg x^{1/20}$ may now be both $\log$ and $1$.

Since Lemmas~\ref{lem:second-moment} and~\ref{lem:fourth-moment} apply to both types of coefficient sequences, the Type I, Type I$_2$, and Type III arguments hold for $\Lambda_{\boldsymbol N}$ as well. The Type II argument requires more care: when $q\leq(\log x)^A$, the proof of Lemma~\ref{lem:type-ii} obtains a saving from the Dirichlet polynomial over small primes, denoted by $P(1/2+it,\chi)$. This part of the argument does not apply to $\Lambda_{\boldsymbol N}$. However, by the statement of Theorem~\ref{thm:mangoldt}, we may assume that
 \[
 (\log x)^A \leq q \leq \frac{H^2}{x(\log x)^{A/3}} \quad \text{and}\quad \norm{q\alpha} \leq \frac{x(\log x)^{A/3}}{H^2},
 \]
 which excludes the previous case. In contrast, the large $q$ case of Lemma~\ref{lem:type-ii} still applies. The proof is then completed by the same argument as in Section~\ref{sec9}. Likewise, the proof of (\ref{ec1.1}) for $\mu$ follows by an identical manner save that $a_{i}(n)$ can no longer be $\log n$.

\section{Proof of Theorem \ref{thm1.3}}\label{sec11}
We shall now prove Theorem \ref{thm1.3} by combining Theorem \ref{thm:mangoldt} with the application of zero density estimates in short intervals. To such an end we shall derive these by incorporating Guth and Maynard's new large value estimates \cite{GM}.
\begin{theorem}\label{thm1}
Suppose that $(b_{n})$ is a sequence of complex numbers with $\lvert b_{n}\rvert\leq 1$ and $(t_{r})_{r\leq R}$ is a sequence of $1$-separated points in $[T, T+U]$ such that
$$\Big\lvert \sum_{n=N}^{2N}b_{n}n^{it_{r}}\Big\rvert\geq V$$ for all $r\leq R$. Then one has
\begin{equation}\label{rr44}
R\ll U^{\varepsilon} \big(N^{2}V^{-2}+U\min(NV^{-2},N^{4}V^{-6})\big).\end{equation} Moreover,
\begin{equation} \label{RR}R\ll U^{\varepsilon}(N^{2}V^{-2}+N^{18/5}V^{-4}+UN^{12/5}V^{-4}).\end{equation} 

\end{theorem}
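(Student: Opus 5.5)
The plan is to reduce both bounds to large value estimates already available for $1$-separated points in an interval of length comparable to its position, by the same shift as in the proof of Theorem~\ref{thm:large-values}. Put $W_U=\{t_r-T:\ r\leq R\}\subset[0,U]$ and $b_n'=b_nn^{iT}$. Then $\lvert b_n'\rvert=\lvert b_n\rvert\leq 1$, the points of $W_U$ are still $1$-separated, and
\[
\Big\lvert\sum_{n=N}^{2N}b_n'n^{i(t_r-T)}\Big\rvert=\Big\lvert\sum_{n=N}^{2N}b_nn^{it_r}\Big\rvert\geq V
\]
for every $r\leq R$. So it is enough to prove both bounds for $1$-separated points in $[0,U]$, with constants independent of $T$.

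For \eqref{rr44} I would take the case $q=1$ of \eqref{rr4}, which is \cite[Lemma 10.2]{Har}. This gives $R\ll(\log 2U)^2\big(N^2V^{-2}+U\min(NV^{-2},N^4V^{-6})\big)$, and since $(\log 2U)^2\ll U^\varepsilon$ the bound follows. If the interval $[T,T+U]$ has to be treated directly rather than via the shift, one can just run the Montgomery--Hal\'asz argument: duality gives the mean value term $N^2V^{-2}+UNV^{-2}$, and Huxley's subdivision into subintervals of length $U_0\asymp N^2V^{-2}$ gives the $N^4V^{-6}$ term. Neither step depends on the location $T$, because the kernel $\sum_n w(n/N)n^{i(t-t')}$ only involves differences of the points.

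For \eqref{RR} the main input is the Guth--Maynard large value theorem \cite{GM}. In its usual normalisation it reads $R\ll T^{\varepsilon}(N^2V^{-2}+N^{18/5}V^{-4}+TN^{12/5}V^{-4})$ for $1$-separated points in $[0,T]$, and I would apply it with $T=U$ to the shifted points. The one thing to check is that the proof in \cite{GM} uses only the sizes $N$, $V$ and the length of the range of the $t_r$, and that all its ingredients (duality, the $L^2$ bounds for $\sum_n n^{i(t_r-t_s)}$ and the additive energy argument) are invariant under the twist $b_n\mapsto b_nn^{iT}$. This is also what \cite[Theorem 1.1]{CGY} with $q=1$ provides, and it is already used in \eqref{2.8}. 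In the regime $N\geq U^{2/3}$ one can read \eqref{RR} off \eqref{2.8} directly, since with $q=1$ the terms $U^{1/2}N^3V^{-4}$ and $UN^2V^{-4}$ are bounded by $N^{18/5}V^{-4}+UN^{12/5}V^{-4}$ by elementary comparisons.

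The real obstacle is the range where $N$ is small compared with $U$, specifically $N<U^{2/3}$, where \eqref{2.8} is not available. I would deal with it by checking that \eqref{rr44} already implies \eqref{RR} in this range. First suppose $V^2\leq N^{6/5}$. Then $UNV^{-2}\leq UN^{12/5}V^{-4}$, because this is equivalent to $V^2\leq N^{7/5}$. Now suppose $V^2>N^{6/5}$. If in addition $V^2\geq N^{3/2}$, then $UN^4V^{-6}\leq UN^{12/5}V^{-4}$, since that is equivalent to $V^2\geq N^{8/5}$, and otherwise $UNV^{-2}$ is the smaller term in the minimum and is again at most $UN^{12/5}V^{-4}$. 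Going through these subcases settles the small $N$ range, and otherwise I would fall back on citing \cite{GM} in its translated form.
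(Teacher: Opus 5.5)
Your core argument is exactly the paper's proof, and it is complete on its own: translate the points to $[0,U]$ via $b_n\mapsto b_nn^{iT}$, then invoke \cite[Lemma 10.2]{Har} for \eqref{rr44} and \cite[Theorem 1.1]{GM} with $T=U$ for \eqref{RR}. Since $|b_nn^{iT}|\leq 1$, and Guth--Maynard's theorem is stated for arbitrary such coefficients and arbitrary $1$-separated points in $[0,T]$ with no restriction on $N$, you can apply it as a black box. There is nothing inside their proof to re-check, no need to pass through \eqref{2.8}, and no separate small-$N$ range to handle.

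This matters because your last paragraph is wrong and should be deleted rather than kept as a fallback. There you try to treat $N<U^{2/3}$ by deducing \eqref{RR} from \eqref{rr44}. Write $V=N^{\sigma}$. Then $UNV^{-2}\leq UN^{12/5}V^{-4}$ holds only for $\sigma\leq 7/10$, and $UN^4V^{-6}\leq UN^{12/5}V^{-4}$ holds only for $\sigma\geq 4/5$. Your step ``$V^2\geq N^{3/2}$ implies $UN^4V^{-6}\leq UN^{12/5}V^{-4}$'' actually needs $V^2\geq N^{8/5}$, and your ``otherwise'' step needs $V^2\leq N^{7/5}$.

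In the window $7/10<\sigma<4/5$, both terms of the minimum exceed $UN^{12/5}V^{-4}$. When $N<U^{2/3}$ they also exceed $N^2V^{-2}+N^{18/5}V^{-4}$. For example, at $\sigma=3/4$ the minimum is $UN^{-1/2}$, while the right side of \eqref{RR} is $N^{1/2}+N^{3/5}+UN^{-3/5}$. This window is exactly where Guth--Maynard improves on Montgomery--Hal\'asz--Huxley, so no elementary comparison can recover \eqref{RR} there. The bound genuinely rests on \cite{GM}.
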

\begin{proof}
The theorem holds by the same argument as in \ref{thm:large-values} but employing \cite[Theorem 1.1]{GM} instead of \cite[Theorem 1.1]{CGY}.
\end{proof}
In what follows we shall fix some $q\leq (\log x)^{A}$. We introduce for $U>0$ and $0\leq \sigma\leq 1$ the number $N(\sigma, T, U,q)$ of zeros of $$\prod_{\chi\mod{q}}L(s,\chi)$$ in the rectangle $\text{Re}(s)\geq \sigma$ and $T-U\leq \text{Im}(s)\leq T+U$. In order to provide estimates for the above object we essentially follow the classical zero-detecting methodology (see \cite{May-Pratt}). We first present some notation for further use. Let $\delta>0$ be a small constant, let
$$a(n)=\sum_{\substack{m|n\\ m\leq U^{\delta}}}\mu (m)$$
 and
\begin{equation*}M(s)=\sum_{m\leq U^{\delta}}\frac{\chi(m)\mu(m)}{m^{s}},\ \ \ \ \ \ \ \ \ \ Y=U^{\frac{25}{42}}.\end{equation*}

\begin{lemma}
Let $\chi$ be a character $\mod{q}$ and $\rho=\beta+i\gamma$ be a non-trivial zero of $L(s,\chi)$. Then either
\begin{equation}\label{hhg}\Big\lvert \sum_{N\leq n\leq 2N}\frac{a(n)\chi(n)}{n^{\rho}}\exp\Big(-\frac{n}{Y}\Big)\Big\rvert\geq \frac{1}{3\log T}\ \ \ \ \ \ \ \ \ \text{for some $N=2^{j}\in [U^{\delta}, Y(\log T)^{2}]$}\end{equation} or
\begin{equation}\label{po}\Big\lvert \frac{1}{2\pi i}\int_{\text{Re$(s)$}=-\beta+1/2}Y^{s}\Gamma(s)M(\rho+s)L(\rho+s,\chi)ds\Big\rvert\geq \frac{1}{3}.\end{equation}
\end{lemma}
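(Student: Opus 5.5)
The plan is to prove the standard zero-detection dichotomy through a Mellin inversion identity for the smoothed mollified sum at the zero $\rho$. First I would write down
\[
\sum_{n\geq 1}\frac{a(n)\chi(n)}{n^{\rho}}e^{-n/Y}=\frac{1}{2\pi i}\int_{\mathrm{Re}(s)=2}Y^{s}\Gamma(s)M(\rho+s)L(\rho+s,\chi)\rd s,
\]
which holds because $e^{-y}=\frac{1}{2\pi i}\int_{(2)}\Gamma(s)y^{-s}\rd s$. One expands $M(\rho+s)L(\rho+s,\chi)=\sum_n a(n)\chi(n)n^{-\rho-s}$, which converges absolutely on $\mathrm{Re}(s)=2$, and then interchanges the sum and the integral.

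Next I would shift the contour to $\mathrm{Re}(s)=1/2-\beta$. This line lies to the left of $0$ since $\beta>1/2$ is implicit; if $\beta\leq 1/2$ one takes the line at a small negative real part instead. The rapid decay of $\Gamma$ on vertical lines, together with polynomial growth of $L$ and $M$, justifies the shift. The only pole crossed is at $s=0$, from $\Gamma$. Its residue is $M(\rho)L(\rho,\chi)=0$, because $L(\rho,\chi)=0$. A pole of $L$ at $\rho+s=1$ can occur only for principal $\chi$, at $s=1-\rho$. Its contribution $Y^{1-\rho}\Gamma(1-\rho)M(1)\mathrm{Res}$ is negligible: by Stirling it is $\ll Ye^{-c|\gamma|}$, which is tiny when $|\gamma|\gg\log^2 T$. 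The finitely many low-lying zeros would be excluded, or absorbed by adjusting constants. This gives
\[
\sum_{n}\frac{a(n)\chi(n)}{n^{\rho}}e^{-n/Y}=\frac{1}{2\pi i}\int_{\mathrm{Re}(s)=1/2-\beta}\cdots \rd s+o(1).
\]

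On the left-hand side, $a(1)=1$ and $a(n)=\sum_{m\mid n}\mu(m)=0$ for $1<n\leq U^{\delta}$, since then every divisor of $n$ is at most $U^{\delta}$. The term $n=1$ contributes $e^{-1/Y}=1+O(1/Y)$. For the tail $n>Y(\log T)^2$, I use $|a(n)|\leq d(n)$ and $\mathrm{Re}(\rho)\geq 0$ to bound it by $\sum_{n>Y\log^2T}d(n)e^{-n/Y}\ll e^{-\log^2 T/2}$, which is negligible. The remaining range $U^{\delta}<n\leq Y(\log T)^2$ splits dyadically into at most $\log T$ blocks $N=2^j$.

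Hence $1+o(1)$ equals the sum of at most $\log T$ dyadic block sums plus the integral in \eqref{po}. If \eqref{po} fails, the block sums total at least $2/3-o(1)$ in modulus. By pigeonhole some block then has modulus at least $(2/3-o(1))/\log_2(Y\log^2T)\geq 1/(3\log T)$, provided $\log_2 Y\leq \tfrac{25}{42}\cdot 1.45\log U$ stays below $2\log T$, which holds for $U\leq T$. That is \eqref{hhg}. The partial dyadic endpoints at $U^{\delta}$ are handled by rounding $N$ to the neighbouring power of two.

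The main obstacle is only bookkeeping. One has to keep the pigeonhole constant compatible with $1/(3\log T)$, and handle the principal-character pole. Both are routine, so the argument is essentially that of \cite{May-Pratt}.
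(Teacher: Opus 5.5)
Your proposal is correct and is the standard zero-detection argument that the paper invokes by citing the proof of Maynard--Pratt's Lemma 23: Mellin inversion of $e^{-n/Y}$, a shift to $\mathrm{Re}(s)=1/2-\beta$ with vanishing residue at $s=0$ because $L(\rho,\chi)=0$, a negligible pole at $s=1-\rho$ for the principal character, $a(n)=0$ for $1<n\leq U^{\delta}$, a negligible tail beyond $Y(\log T)^2$, and dyadic pigeonholing. The side conditions you flag are the same ones the paper's statement leaves implicit: $|\gamma|$ must be large enough that the pole term is $o(1)$, and such low-lying zeros should be discarded rather than ``absorbed by adjusting constants'' since the pole term could cancel the leading $1$; and $U$ must not be much larger than $T$, so that at most $2\log T$ dyadic blocks occur.
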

\begin{proof}
The result follows by the argument in the proof of \cite[Lemma 23]{May-Pratt}. 
\end{proof}

\begin{theorem}\label{pr}

Let $q\leq (\log x)^{A}$. Suppose that $T\leq U^{2+\varepsilon}$ and that (\ref{eq-moment}) holds. Then, if $1/2\leq \sigma \leq 1$ one has $$N(\sigma,T,U,q)\ll U^{50(1-\sigma)/21+\varepsilon}.$$

\end{theorem}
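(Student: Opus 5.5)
We follow the classical zero-detection method, with the preceding lemma as the detection step. Fix $1/2\leq\sigma\leq1$ and pick a maximal set of zeros $\rho=\beta+i\gamma$ of $\prod_{\chi\mod q}L(s,\chi)$ in the rectangle, with $\beta\geq\sigma$, such that any two zeros belonging to the same character have ordinates at least $1$ apart. Zeros of a fixed $L(s,\chi)$ have $O(\log(qT))$ ordinates in any unit interval, so this loses only a factor $(\log x)^{O(1)}$, which is absorbed into $U^{\varepsilon}$. By the lemma, each selected zero satisfies either \eqref{hhg} for one of $O(\log T)$ dyadic values $N$ (a Type I zero), or \eqref{po} (a Type II zero). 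We count the two kinds separately.

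\emph{Type I zeros.} Fix $N$. Partial summation removes the factor $n^{-\beta}$, since $\beta\geq\sigma$. Hence the polynomial $\sum_{n\sim N}a(n)\exp(-n/Y)\chi(n)n^{-i\gamma}$ has size $\gg N^{\sigma}(\log T)^{-2}$. The coefficients satisfy $|a(n)|\leq d(n)$, and $a(n)=0$ for $1<n\leq U^{\delta}$. If $N^{\sigma}$ is small compared with $U^{\text{something}}$, we raise the polynomial to a power $\ell$ with $N^\ell$ in a favourable range, as in the proof of Huxley-type density theorems. We then apply Theorem~\ref{thm1}, or its character version Theorem~\ref{thm:large-values} summed over the $q\leq(\log x)^A$ characters, with $V=N^{\sigma}U^{-\varepsilon}$ and the interval of length $2U$.
\begin{itemize}
\item The Montgomery--Halász--Huxley bound \eqref{rr44} gives $N^{2-2\sigma}+UN^{4-6\sigma}$.
\item The Guth--Maynard bound \eqref{RR} gives $N^{2-2\sigma}+N^{18/5-4\sigma}+UN^{12/5-4\sigma}$.
\end{itemize}
Since $N\leq Y(\log T)^2=U^{25/42+o(1)}$, we have $N^{2-2\sigma}\leq U^{(25/21)(1-\sigma)}$, which is acceptable. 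For the off-diagonal terms we choose the power $\ell$ so that $N^{\ell}$ lies near $U^{5/6}$ or $U$, and check that the result is $\ll U^{50(1-\sigma)/21+\varepsilon}$. The exponent $50/21=2\cdot(25/21)$ is exactly what Guth--Maynard produce, $\frac{30}{13}<\frac{50}{21}$, so there is room. The restriction $T\leq U^{2+\varepsilon}$ enters only through the logarithms and through the validity of these bounds for $t\in[T-U,T+U]$, which Theorem~\ref{thm1} allows.

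\emph{Type II zeros.} Shift the contour in \eqref{po} to $\Re(\rho+s)=1/2$. The rapid decay of $\Gamma$ truncates $|\Im s|\leq(\log T)^2$. We obtain $|M(\tfrac12+it)L(\tfrac12+it,\chi)|\gg Y^{\sigma-1/2}(\log T)^{-3}$ at some $t$ within $(\log T)^2$ of $\gamma$. Select a well-spaced subset of these points, losing only a logarithmic factor. Split according to dyadic size: $|M|\geq V_1$ and $|L|\geq V_2$ with $V_1V_2\gg Y^{\sigma-1/2}U^{-\varepsilon}$. By Markov's inequality and the hypothesis \eqref{eq-moment}, applied to $O(U^{17/42})$ subintervals of length $T^{25/42}$ covering $[T-U,T+U]$, the number of points with $|L|\geq V_2$ is $\ll V_2^{-4}\cdot U\cdot U^{\varepsilon}$. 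Here one should check that the ratio $U/T^{25/42}$ of interval lengths is harmless when $T\leq U^{2+\varepsilon}$; if not, one uses $Y$ adapted to $T$. The large values of $M$ (length $U^{\delta}$) are counted by \eqref{rr44} through its $\ell$-th power, with $\ell\asymp 1/\delta$. Combining the two counts by taking the minimum, or via Hölder with the fourth-moment hypothesis, gives a bound of the form $U^{1+\varepsilon}Y^{-4(\sigma-1/2)}=U^{1-\frac{50}{21}(\sigma-\frac12)+\varepsilon}$. This is exactly $U^{50(1-\sigma)/21+\varepsilon}$ at $\sigma=1/2$ and beyond, since $1-\tfrac{50}{21}(\sigma-\tfrac12)\leq\tfrac{50}{21}(1-\sigma)$ holds because $\tfrac{25}{21}\geq1$.

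\emph{Main obstacle.} The delicate step is the Type I count for intermediate $\sigma$ near $7/10$–$3/4$. There one has to choose the power $\ell$ and juggle \eqref{rr44} against \eqref{RR} so that every range of $N\in[U^\delta,U^{25/42}]$ yields exponent at most $50(1-\sigma)/21$. This requires a careful piecewise optimisation mirroring the Guth--Maynard zero-density argument. I would first treat the regime $N^\ell\in[U^{2/3},U^{5/6}]$ with \eqref{RR}, and otherwise fall back on \eqref{rr44}.
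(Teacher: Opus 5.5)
Your architecture is the paper's: the detection lemma, Type~II zeros counted by Markov's inequality against \eqref{eq-moment}, and Type~I zeros counted via powers of $\widetilde D_N$ with \eqref{rr44} and \eqref{RR}. However, the Type~II count is wrong as written, and so is your claim that $T\leq U^{2+\varepsilon}$ ``enters only through the logarithms''. Hypothesis \eqref{eq-moment} only controls windows of length $T^{25/42}$. Since $T$ may be as large as $U^{2+\varepsilon}$, the range $[T-2U,T+2U]$ can be much shorter than a single such window. The correct mean value is therefore $\sum_{\chi}\int_{T-2U}^{T+2U}|L(\frac12+it,\chi)|^4\,\rd t\ll T^{\varepsilon}(U+T^{25/42})$, not $U^{1+\varepsilon}$; your ``$O(U^{17/42})$ subintervals'' tacitly assumes $T\asymp U$. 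You do not need a large-value count for $M$: bounding $|M(\frac12+it)|\leq U^{\delta/2}$ trivially gives
\[
R_{II}\ll U^{2\delta+\varepsilon}\,Y^{-(4\sigma-2)}\bigl(U+T^{25/42}\bigr)\ll U^{\frac{25}{21}(1-2\sigma)+\frac{25}{21}+O(\delta+\varepsilon)}=U^{\frac{50}{21}(1-\sigma)+O(\delta+\varepsilon)}.
\]
This is exactly the target for every $\sigma$, with no slack. The hypothesis $T\leq U^{2+\varepsilon}$ gives $T^{25/42}\ll U^{25/21+\varepsilon}$, and $Y=U^{25/42}$ is chosen so that $Y^{2}$ matches it. So the exponent $50/21$ is dictated by Type~II, not Type~I, and you should not let $Y$ vary with $T$. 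Separately, your own exponent $1-\frac{50}{21}(\sigma-\frac12)$ is not ``exactly'' the target at $\sigma=1/2$, since $1<\frac{25}{21}$.

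The Type~I part, which you leave as the ``main obstacle'', is the other missing piece, and the window you suggest for \eqref{RR}, namely $N^{\ell}\in[U^{2/3},U^{5/6}]$, fails. Write $N'=N^{\ell}$. At $\sigma=7/10$ both \eqref{rr44} and \eqref{RR} leave the term $UN'^{-2/5}$, which is $\leq U^{5/7}=U^{50(1-\sigma)/21}$ only if $N'\geq U^{5/7}$.

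The right window is $U^{5/7}\ll N'\ll U^{25/21+\varepsilon}$, with the upper end forced by the diagonal term $N'^{2-2\sigma}$. Inside this window every bound is linear in $\sigma$, so you only need to check corners:
\begin{itemize}
\item For $\sigma\leq 7/10$, use $UN'^{1-2\sigma}$ from \eqref{rr44}.
\item For $\sigma>7/10$ and $N'\leq U^{5/6}$, use \eqref{RR}. Here $N'^{18/5}\leq UN'^{12/5}$, and $UN'^{12/5-4\sigma}$ is acceptable at $\sigma=7/10$ and at $\sigma=1$ (the latter needs only $N'\geq U^{5/8}$).
\item For $\sigma>7/10$ and $N'\geq U^{5/6}$, use $\min(UN'^{1-2\sigma},UN'^{4-6\sigma})$ from \eqref{rr44}. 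The corner $\sigma=3/4$ requires $N'\geq U^{17/21}$, which holds.
\end{itemize}

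Finally, you need every $N\in[U^{\delta},U^{25/42}(\log T)^2]$ to have some power $N^k$, with $2\leq k\ll\delta^{-1}$, in this window. The case $k=1$ never works, since $25/42<5/7$. The case $k=2$ handles $N\geq U^{5/14}$. For consecutive $k\geq 2$ the windows $[U^{5/(7k)},U^{25/(21k)}]$ overlap, because $(25/21)/(5/7)=5/3\geq 3/2$. With these two repairs your outline becomes the paper's proof.
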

\begin{proof}
By the above lemma, we may write $N(\sigma,T,U,q)=R_{I}(\sigma,T,U,q)+R_{II}(\sigma,T,U,q)$, where $R_{I}(\sigma,T,U,q)$ denotes the number of Type I zeros, that is, zeros satisfying (\ref{hhg}), and $R_{II}(\sigma,T,U,q)$, that denotes the number of Type II zeros, that is, zeros satisfying (\ref{po}).

Following the proof of Lemma 24 in \cite{May-Pratt}, we deduce that
\begin{align*}R_{II}(\sigma,T,U,q)&\ll \frac{(\log T)^{O(1)}}{Y^{4\sigma-2}}\sum_{\chi\mod{q}}\int_{T-2U}^{T+2U}\Big\lvert M\Big(\frac{1}{2}+it\Big)L\Big(\frac{1}{2}+it,\chi\Big)\Big\rvert^{4} dt
\\
&\ll U^{\frac{25}{21}(1-2\sigma)+9\delta/2}(U+T^{\frac{25}{42}+\varepsilon})\ll U^{\frac{50}{21}(1-\sigma)+5\delta},
\end{align*} where in the second step we employed the assumption (\ref{eq-moment}) and in the last one we used the proviso $T\leq U^{2+\varepsilon}$.

In order to bound the type I zeros, we introduce $$\widetilde{b_{n}}=\Big(\frac{N}{n}\Big)^{\sigma} a(n)\chi(n)\exp\Big(-\frac{n}{Y}\Big),\ \ \ \ \widetilde{D}_{N}(t)=\sum_{n\in [N,2N]} \widetilde{b_{n}}n^{it}.$$
By the classical zero-detecting methodology, it suffices to show (see the proof of Theorem 1.2 in \cite{GM}) that if $U^{\delta}\leq N <Y(\log T)^{2}$ and $W_{\chi}$ is a $1$-separated set in $[T-U, T+U]$ such that $\lvert \widetilde{D}_{N} (t)\rvert\gg N^{\sigma}$ then one has $\lvert W_{\chi}\rvert\leq U^{50(1-\sigma)/21+\varepsilon}$. We begin by showing first that whenever $U^{5/7}\ll N\ll U^{25/21+\varepsilon}$ then \begin{equation}\label{11.8}\lvert W_{\chi}\rvert\leq U^{50(1-\sigma)/21+\varepsilon}.\end{equation}

Indeed, if $\sigma\leq 7/10$ it follows by (\ref{rr44}) that 
$$\lvert W_{\chi}\rvert\ll U^{\varepsilon}\big(N^{2-2\sigma}+UN^{1-2\sigma}\big).$$ We then claim that the right side of the above equation is $O(U^{50(1-\sigma)/21+\varepsilon}).$ This may be seen by observing that when $U^{5/7}\ll N\ll U^{25/21+\varepsilon}$ and $\sigma=7/10$ it transpires that
$$N^{2-2\sigma}+UN^{1-2\sigma}=N^{3/5}+UN^{-2/5}\ll U^{5/7+\varepsilon}=U^{50(1-\sigma)/21+\varepsilon}.$$ If instead $\sigma<7/10$ then $N^{2-2\sigma}\ll U^{50(1-\sigma)/21+\varepsilon}$, whence it suffices to show that 
\begin{equation}\label{khl}N\Big(\frac{U^{50/21}}{N^{2}}\Big)^{\sigma}\ll U^{29/21+\varepsilon}.\end{equation} We may deduce that the above holds since it is satisfied when $\sigma=7/10$ and since $N^{2}\ll U^{50/21+2\varepsilon}$.

We shall now focus on the instance $\sigma>7/10$, and suppose first that $U^{5/7}\ll N\ll U^{5/6}$. Note that under such circumstances, the third term in the right side of (\ref{RR}) dominates over the second one. Therefore, an application of such a bound yields
$$\lvert W_{\chi}\rvert\ll U^{\varepsilon}\big(N^{2-2\sigma}+UN^{12/5-4\sigma}\big).$$ As above, the term $N^{2-2\sigma}$ is $O(U^{50(1-\sigma)/21+3\varepsilon}).$ Therefore, it is enough showing that 
$$N^{12/5}\Big(\frac{U^{50/21}}{N^{4}}\Big)^{\sigma}\ll U^{29/21}.$$ We infer that the preceding estimate is satisfied when $\sigma>7/10$ since it holds for $\sigma=7/10$ and since $U^{25/42}\ll U^{5/7}\ll N$, and hence $U^{50/21}\ll N^{4}$.

It therefore remains to discuss the case in which $\sigma>7/10$ and $U^{5/6}\ll N\ll U^{25/21+\varepsilon}.$ We thus apply (\ref{rr44}) and obtain
$$\lvert W_{\chi}\rvert\ll U^{\varepsilon}\big(N^{2-2\sigma}+\min(UN^{1-2\sigma},UN^{4-6\sigma})\big).$$
As above, the term $N^{2-2\sigma}$ is negligible. Moreover, if $\sigma=3/4$ it transpires that
$$\min(UN^{1-2\sigma},UN^{4-6\sigma})\ll U^{50(1-\sigma)/21}$$ whenever $UN^{-1/2}\ll U^{25/42}$, which holds since by assumption, $U^{17/21}\ll U^{5/6}\ll N$. Consequently, it suffices to show that whenever $7/10<\sigma\leq 3/4$ then (\ref{khl}) holds, and whenever $3/4<\sigma\leq 1$ then 
\begin{equation}\label{11.7}N^{4}\Big(\frac{U^{50/21}}{N^{6}}\Big)^{\sigma}\ll U^{29/21}.\end{equation}
Indeed, the bound in (\ref{khl}) in this context is satisfied as well by a similar argument, the estimate (\ref{11.7}) following since it is valid for $\sigma=3/4$ and since $U^{50/21}\ll N^{6}$ in view of the fact that $U^{25/63}\ll U^{5/6}\ll N$. This concludes the proof of (\ref{11.8}).

We next claim that if $U^{\delta}\leq N <Y(\log T)^{2}$ then there exists some $k\in\mathbb{N}$ with $k\ll \delta^{-1}$ such that $U^{5/7}\ll N^{k}\ll U^{25/21+\varepsilon}$. Assuming the preceding statement, one would then apply (\ref{11.8}) to $\widetilde{D}_{N}(t)^{k}$ to deliver the desired result. It thus remains to verify the previous claim. Indeed, whenever $U^{5/14}\ll N\ll Y(\log T)^{2}=U^{25/42}(\log T)^{2}$ then $U^{5/7}\ll N^{2}\ll U^{25/21+\varepsilon}$. Moreover, $U^{5/(7k)}<U^{25/21(k+1)}$ whenever $k\geq 2$. This in turn entails that $$[U^{\delta},Y(\log T)^{2}]\subset\bigcup_{k=1}^{K_{\delta}}[U^{5/(7k)},U^{25/(21k)+\varepsilon/k}]$$ for some $K_{\delta}\in\mathbb{N}$ satisfying $K_{\delta}\ll\delta^{-1}$, as desired.
\end{proof}
\emph{Proof of Theorem \ref{thm1.3}}. We are now prepared to deliver the proof of the aforementioned theorem. Let $\alpha\in \minor$. By Dirichlet approximation, there exists $a\in\mathbb{Z}$ and $q\in \mathbb{N}$ with $0\leq a\leq q$ such that $(a,q)=1$ and satisfying \[
  q \leq \frac{H^2}{x(\log x)^{A/3}} \quad \text{and}\quad \lvert \alpha-a/q\rvert \leq \frac{x(\log x)^{A/3}}{H^2}.
 \]
The proof of Theorem~\ref{thm:mangoldt} enables one to deduce that if $q> (\log x)^{A}$ then 	\[
\Bigabs{\sum_{x<n \leq x+H} \Lambda (n) e(n\alpha)} \leq H(\log x)^{-A/100}.
\] If instead $q\leq (\log x)^{A}$ then by the definition of $\minor$ it follows that $\lvert \alpha-a/q\rvert\geq \frac{(\log x)^{5A}}{H}$. We combine the preceding assumptions with Lemma 6 and equation (10) of \cite{PanPan} to obtain
\begin{align*}\sum_{x<n \leq x+H} \Lambda (n) e(n\alpha)\ll& H(\log x)^{-A}+q^{-1/2}(\log x)\lvert \beta\rvert^{-1}
\\
&+q^{-1/2}(\log x)^{O(1)}H\max_{\substack{1/2\leq \sigma\leq 1\\ 1\leq T_{1}\leq \lvert \beta\rvert x(\log x)^{B}}}x^{\sigma-1}N(\sigma,T_{1},F_{1}(\beta),q),\end{align*} where the function $F_{1}(\beta)$ is defined as $F_{1}(\beta)=x/H$ if $\lvert\beta \rvert \leq x/10H^{2}$ and $F_{1}(\beta)=10\lvert \beta\rvert H$ if $\lvert\beta \rvert > x/10H^{2}$, and $B>0$ is some large but fixed constant. Consequently, inserting the above lower bound on $\lvert\beta\rvert$ in the previous equation, observing that $\lvert \beta\rvert x(\log x)^{B}\leq (x/H)^{2}(\log x)^{A+B}\leq F_{1}(\beta)^{2+\varepsilon}$ and applying Theorem \ref{pr} yields
\begin{align*}\sum_{x<n \leq x+H} \Lambda (n) e(n\alpha)&\ll H(\log x)^{-A}+H(\log x)^{2}\max_{\substack{1/2\leq \sigma\leq 1-c(\log x)^{-4/5}}}\Big((x/H)^{50/21+\varepsilon}x^{-1}\Big)^{1-\sigma}
\\
&\ll H(\log x)^{-A}.\end{align*} The reader may observe that in the above lines we implicitly employed the classical known zero-free region for $L(s,\chi)$ and the assumption $H\geq x^{0.58+\varepsilon}.$ This concludes the proof.


\bibliographystyle{plain}  

\bibliography{md_references.bib}

@article{BM,
  title={A new upper bound for sets with no square differences},
  author={Bloom, Thomas F and Maynard, James},
  journal={Compos. Math.},
  volume={158},
  number={8},
  pages={1777--1798},
  year={2022},
  publisher={London Mathematical Society}
}

@article {May-Pratt,
    AUTHOR = {Maynard, J. and Pratt, K.},
     TITLE = {Half-isolated zeros and zero-density estimates},
   JOURNAL = {Int. Math. Res. Not. IMRN},
  FJOURNAL = {International Mathematics Research Notices. IMRN},
      YEAR = {2024},
    NUMBER = {19},
     PAGES = {12978--13014},
      ISSN = {1073-7928,1687-0247},
   MRCLASS = {11M26},
  MRNUMBER = {4805841},
MRREVIEWER = {Timothy\ S.\ Trudgian},
       DOI = {10.1093/imrn/rnae191},
       URL = {https://doi.org/10.1093/imrn/rnae191},
}

@Article{MPPTW,
  author  = {Matom\"aki, K. and Pandey, M. and Pliego, J. and Ter\"av\"ainen, J. and Wang, M.},
  journal = {arXiv:2610.09567},
  title   = {Higher order uniformity of the primes and cancelation of the {M}\"obius function in shorter intervals},
  year    = {2026},
}

@article {PanPan,
    AUTHOR = {Pan, C. and Pan, C.},
     TITLE = {On estimations of trigonometric sums over primes in short intervals. {III}},
   JOURNAL = {Chinese Ann. Math. Ser. B},
      VOLUME = {11},
      YEAR = {1990},
    NUMBER = {2},
     PAGES = {138--147},
      ISSN = {0252-9599},
   MRCLASS = {11L20 (11P32)},
  MRNUMBER = {1062085},
MRREVIEWER = {Matti\ Jutila},
}

@article{MPR,
  title={Exponential sums over primes},
  author={Maynard, J. and Pandey, M. and Radziwi{\l}{\l}, M.},
  journal={ arXiv:2608.14777},
  year={2026}
}

@Book{montgomery,
  author    = {Montgomery, H.},
  publisher = {Springer},
  title     = {Topics in multiplicative number theory},
  year      = {2006},
}

@book{GK-vander,
  title={Van der Corput's method of exponential sums},
  author={Graham, S. W and Kolesnik, G.},
  volume={126},
  year={1991},
  publisher={Cambridge University Press}
}

@article{Ramachandra-conve,
AUTHOR = {Ramachandra, K.},
     TITLE = {On the Phragm\'en-Lindel\"of theorem and some applications},
     journal = {Math. Z.},
   FJOURNAL = {Mathematische Zeitschrift},
    VOLUME = {72},
      YEAR = {1959},
    NUMBER = {1},
     PAGES = {192--204},
       DOI = {10.1007/BF01162949},
       URL = {https://doi.org/10.1007/BF01162949},
}

@article {Ramachandra,
    AUTHOR = {Ramachandra, K.},
     TITLE = {Some problems of analytic number theory},
   JOURNAL = {Acta Arith.},
  FJOURNAL = {Polska Akademia Nauk. Instytut Matematyczny. Acta Arithmetica},
    VOLUME = {31},
      YEAR = {1976},
    NUMBER = {4},
     PAGES = {313--324},
      ISSN = {0065-1036},
   MRCLASS = {10H15 (10H10 10H25)},
  MRNUMBER = {424723},
MRREVIEWER = {W.\ Schwarz},
       DOI = {10.4064/aa-31-4-313-324},
       URL = {https://doi.org/10.4064/aa-31-4-313-324},
}

@article{PSW,
  title={Local divisor correlations in almost all short intervals},
  author={Pliego, J. and Sun, Y. and Wang, M.},
  journal={arXiv:2503.02962},
  year={2025}
}

@article {JM,
    AUTHOR = {Jutila, M. and Motohashi, Y.},
     TITLE = {Mean value estimates for exponential sums and {$L$}-functions:
              a spectral-theoretic approach},
   JOURNAL = {J. Reine Angew. Math.},
  FJOURNAL = {Journal f\"ur die Reine und Angewandte Mathematik. [Crelle's
              Journal]},
    VOLUME = {459},
      YEAR = {1995},
     PAGES = {61--87},
      ISSN = {0075-4102,1435-5345},
   MRCLASS = {11L07 (11F72 11M06)},
  MRNUMBER = {1319517},
MRREVIEWER = {D.\ R.\ Heath-Brown},
       DOI = {10.1515/crll.1995.459.61},
       URL = {https://doi.org/10.1515/crll.1995.459.61},
}

@article {Jutila,
    AUTHOR = {Jutila, M.},
     TITLE = {Mean value estimates for exponential sums with applications to
              {$L$}-functions},
   JOURNAL = {Acta Arith.},
  FJOURNAL = {Polska Akademia Nauk. Instytut Matematyczny. Acta Arithmetica},
    VOLUME = {57},
      YEAR = {1991},
    NUMBER = {2},
     PAGES = {93--114},
      ISSN = {0065-1036},
   MRCLASS = {11L26 (11M06)},
  MRNUMBER = {1092763},
MRREVIEWER = {D.\ R.\ Heath-Brown},
       DOI = {10.4064/aa-57-2-93-114},
       URL = {https://doi.org/10.4064/aa-57-2-93-114},
}

@article {Kim,
    AUTHOR = {Kim, H.},
     TITLE = {Functoriality for the exterior square of {${\rm GL}_4$} and
              the symmetric fourth of {${\rm GL}_2$}},
     
   JOURNAL = {J. Amer. Math. Soc.},
  FJOURNAL = {Journal of the American Mathematical Society},
    VOLUME = {16},
      YEAR = {2003},
    NUMBER = {1},
     PAGES = {139--183},
      ISSN = {0894-0347,1088-6834},
   MRCLASS = {11F70 (11R39 22E46)},
  MRNUMBER = {1937203},
MRREVIEWER = {Mahdi\ Asgari},
       DOI = {10.1090/S0894-0347-02-00410-1},
       URL = {https://doi.org/10.1090/S0894-0347-02-00410-1},
}

@article{CGY,
      title={Large Value Estimates for Dirichlet Polynomials with Characters and Zero Density of Dirichlet {L}-Functions}, 
      author={B. Chen and V. Gupta and Y.C. Li},
      year={2026},
      journal={arXiv:2507.08296} 
}

@article {GM,
    AUTHOR = {Guth, L. and Maynard, J.},
     TITLE = {New large value estimates for {D}irichlet polynomials},
   JOURNAL = {Ann. of Math. (2)},
  FJOURNAL = {Annals of Mathematics. Second Series},
    VOLUME = {203},
      YEAR = {2026},
    NUMBER = {2},
     PAGES = {623--675},
      ISSN = {0003-486X,1939-8980},
   MRCLASS = {11M26 (11N05)},
  MRNUMBER = {5037841},
MRREVIEWER = {Timothy\ S.\ Trudgian},
       DOI = {10.4007/annals.2026.203.2.6},
       URL = {https://doi.org/10.4007/annals.2026.203.2.6},
}

@article {Zhan,
    AUTHOR = {Zhan, T.},
     TITLE = {On the representation of large odd integer as a sum of three almost equal primes},
      
   JOURNAL = {Acta Math. Sinica (N.S.)},
  FJOURNAL = {Acta Mathematica Sinica. New Series},
    VOLUME = {7},
      YEAR = {1991},
    NUMBER = {3},
     PAGES = {259--272},
      ISSN = {1000-9574},
   MRCLASS = {11P32 (11M06)},
  MRNUMBER = {1141240},
MRREVIEWER = {D.\ R.\ Heath-Brown},
       DOI = {10.1007/BF02583003},
       URL = {https://doi.org/10.1007/BF02583003},
}

@article {HB,
    AUTHOR = {Heath-Brown, D. R.},
     TITLE = {Prime numbers in short intervals and a generalized {V}aughan
              identity},
   JOURNAL = {Canadian J. Math.},
  FJOURNAL = {Canadian Journal of Mathematics. Journal Canadien de
              Math\'ematiques},
    VOLUME = {34},
      YEAR = {1982},
    NUMBER = {6},
     PAGES = {1365--1377},
      ISSN = {0008-414x,1496-4279},
   MRCLASS = {10H15 (10H25)},
  MRNUMBER = {678676},
MRREVIEWER = {Kenneth\ A.\ Jukes},
       DOI = {10.4153/CJM-1982-095-9},
       URL = {https://doi.org/10.4153/CJM-1982-095-9},
}

@book{MV,
  title={Multiplicative number theory {I}: {C}lassical theory},
  author={Montgomery, H. and Vaughan, R.},
  number={97},
  year={2007},
  publisher={Cambridge university press}
}

@article {MT,
    AUTHOR = {Matom\"aki, K. and Ter\"av\"ainen, J.},
     TITLE = {On the {M}\"obius function in all short intervals},
   JOURNAL = {J. Eur. Math. Soc. },
    VOLUME = {25},
      YEAR = {2023},
    NUMBER = {4},
     PAGES = {1207--1225},
      ISSN = {1435-9855,1435-9863},
       MRCLASS = {11N37},
  MRNUMBER = {4577962},
MRREVIEWER = {Peter\ Shiu},
}

@book{IK,
  title={Analytic number theory},
  author={Iwaniec, H. and Kowalski, E.},
  volume={53},
  year={2021},
  publisher={American Mathematical Soc.}
}

@article {MRTI,
    AUTHOR = {Matom\"aki, K. and Radziwi\l\l, M. and Tao, T.},
     TITLE = {Correlations of the von {M}angoldt and higher divisor
              functions {I}. {L}ong shift ranges},
   JOURNAL = {Proc. Lond. Math. Soc. (3)},
  FJOURNAL = {Proceedings of the London Mathematical Society. Third Series},
    VOLUME = {118},
      YEAR = {2019},
    NUMBER = {2},
     PAGES = {284--350},
      ISSN = {0024-6115,1460-244x},
   MRCLASS = {11N37 (11A25)},
  MRNUMBER = {3909235},
MRREVIEWER = {Olivier\ Bordell\`es},
       DOI = {10.1112/plms.12181},
       URL = {https://doi.org/10.1112/plms.12181},
}

@article {Shiu,
    AUTHOR = {Shiu, P.},
     TITLE = {A {B}run-{T}itchmarsh theorem for multiplicative functions},
   JOURNAL = {J. Reine Angew. Math.},
  FJOURNAL = {Journal f\"ur die Reine und Angewandte Mathematik. [Crelle's
              Journal]},
    VOLUME = {313},
      YEAR = {1980},
     PAGES = {161--170},
      ISSN = {0075-4102,1435-5345},
   MRCLASS = {10H25},
  MRNUMBER = {552470},
MRREVIEWER = {A.\ I.\ Vinogradov},
       DOI = {10.1515/crll.1980.313.161},
       URL = {https://doi.org/10.1515/crll.1980.313.161},
}

@book{Tao,
  title={Higher order Fourier analysis},
  author={Tao, Terence},
  volume={142},
  year={2012},
  publisher={American Mathematical Soc.}
}

@book{Har,
  title={Prime-Detecting Sieves.(LMS-33)},
  author={Harman, G.},
  year={2012},
  publisher={Princeton university press}
}

@article {MSTT,
    AUTHOR = {Matom\"aki, K. and Shao, X. and Tao, T. and
              Ter\"av\"ainen, J.},
     TITLE = {Higher uniformity of arithmetic functions in short intervals
              {I}. {A}ll intervals},
   JOURNAL = {Forum Math. Pi},
  FJOURNAL = {Forum of Mathematics. Pi},
    VOLUME = {11},
      YEAR = {2023},
     PAGES = {Paper No. e29, 97},
      ISSN = {2050-5086},
   MRCLASS = {11N37 (11B30)},
  MRNUMBER = {4658200},
MRREVIEWER = {Peter\ Shiu},
       DOI = {10.1017/fmp.2023.28},
       URL = {https://doi.org/10.1017/fmp.2023.28},
}

\end{document}